\begin{document}
\let\oldref\ref
\renewcommand{\ref}[1]{(\oldref{#1})}
\newcommand{\mathref}[1]{$($\oldref{#1}$)$}

\newcommand{\scrC}{\ensuremath{\mathcal{C}}}
\newcommand{\scrF}{\ensuremath{\mathcal{F}}}
\newcommand{\scrG}{\ensuremath{\mathcal{G}}}
\newcommand{\scrI}{\ensuremath{\mathcal{I}}}
\newcommand{\scrL}{\ensuremath{\mathcal{L}}}
\newcommand{\scrM}{\ensuremath{\mathcal{M}}}
\newcommand{\scrP}{\ensuremath{\mathcal{P}}}
\newcommand{\scrQ}{\ensuremath{\mathcal{Q}}}

\newcommand{\psif}{\ensuremath{\psi_\scrF}}

\newcommand{\cc}{\ensuremath{\mathbb{C}}}
\newcommand{\kk}{\ensuremath{\mathbb{K}}}
\newcommand{\nn}{\ensuremath{\mathbb{N}}}
\newcommand{\pp}{\ensuremath{\mathbb{P}}}
\newcommand{\qq}{\ensuremath{\mathbb{Q}}}
\newcommand{\rr}{\ensuremath{\mathbb{R}}}
\newcommand{\zz}{\ensuremath{\mathbb{Z}}}

\newcommand{\affine[2]}{\ensuremath{\mathbb{A}^{#1}(#2)}}
\newcommand{\aNk}{\affine[N]{\kk}}
\newcommand{\ank}{\affine[n]{\kk}}

\newcommand{\ccc}{\ensuremath{\mathfrak{c}}}
\newcommand{\FFF}{\ensuremath{\mathfrak{F}}}
\newcommand{\GGG}{\ensuremath{\mathfrak{G}}}
\newcommand{\mmm}{\ensuremath{\mathfrak{m}}}
\newcommand{\hhh}{\ensuremath{\mathfrak{h}}}
\newcommand{\iii}{\ensuremath{\mathfrak{I}}}
\newcommand{\ppp}{\ensuremath{\mathfrak{p}}}

\newcommand{\torus}{\ensuremath{\mathbb{T}}}
\newcommand{\sheaf}{\ensuremath{\mathcal{O}}}

\newcommand{\im}{\ensuremath{\Rightarrow}}
\newcommand{\bs}{\backslash}
\newcommand{\dsum}{\ensuremath{\bigoplus}}
\newcommand{\into}{\ensuremath{\hookrightarrow}}
\newcommand{\onto}{\twoheadrightarrow}
\newcommand{\tensor}{\otimes}

\newcommand{\D}[2]{\frac{\partial #2}{\partial #1}}
\newcommand{\Dd}[2]{\frac{\partial^2 #2}{\partial #1^2}}
\newcommand{\DD}[3]{\frac{\partial^2 #3}{\partial #1 \partial #2}}

\newcommand{\finv}{\ensuremath{f^{-1}}}

\newtheorem{thm}{Theorem}[subsection]
\newtheorem*{thm*}{Theorem}
\newtheorem{lemma}[thm]{Lemma}
\newtheorem*{lemma*}{Lemma}

\newtheorem{prolemma}{Lemma}[thm]
\newtheorem{prop}[thm]{Proposition}
\newtheorem{cor}[thm]{Corollary}
\newtheorem{claim}{Claim}[thm]
\newtheorem*{claim*}{Claim}
\newtheorem*{conjecture*}{Conjecture}

\theoremstyle{definition} 
\newtheorem{example}[thm]{Example}
\newtheorem*{example*}{Example}
\newtheorem{thexample}{Example}[thm]
\newtheorem*{defn*}{Definition}
\newtheorem*{fact*}{Fact}
\newtheorem{rem}[thm]{Remark}

\theoremstyle{remark}
\newtheorem*{rem*}{Remark}
\newtheorem*{notation*}{Notation}
\newtheorem*{question*}{Question}

\newcounter{Example}

\theoremstyle{plain}
\newtheorem{THM}{Theorem}
\newtheorem{PROP}{Proposition}
\newtheorem{LEMMA}{Lemma}

\newcommand{\cl}{\text{Cl}\:}
\newcommand{\conv}{\text{conv}\:}
\newcommand{\gr}{\text{gr}\:}
\newcommand{\proj}{\text{Proj}\:}
\newcommand{\spec}{\text{Spec}\:}
\newcommand{\supp}{\text{Supp}\:}

\newcommand{\identity}{\mathbb{I}}
\newcommand{\ntorus}{(\cc^*)^n}
\newcommand{\WP}{\ensuremath{\bf{W}\pp}}

\newcommand{\cX}{\ensuremath{\cc[X_1,\ldots,X_n]}}
\newcommand{\cx}{\ensuremath{\cc[x_1,\ldots,x_n]}}
\newcommand{\cxstar}{\ensuremath{\cc[x_1,x_1^{-1},\ldots,x_n,x_n^{-1}]}}

\newcommand{\p}{\ensuremath{\mathcal{P}}}
\newcommand{\pnu}{\ensuremath{\p^\nu}}
\newcommand{\pd}{\ensuremath{\p_d}}
\newcommand{\pdminus}{\ensuremath{\p_{d^{-}}}}
\newcommand{\pnubar}{\ensuremath{\overline{\p}^\nu}}

\newcommand{\cXDelta}{\ensuremath{\cX[X_0]_\Delta}} %most probably needed only in the intro

\newcommand{\Md}{Pseudo-degree\ }
\newcommand{\md}{pseudo-degree\ }%md == milman degree
\newcommand{\gf}{\gff\ } %gf == good filtration
\newcommand{\gff}{semidegree} %gf == good filtration
\newcommand{\Gf}{\Gff\ }
\newcommand{\Gff}{Semidegree}
\newcommand{\sgf}{\sgff\ } %gf == good filtration
\newcommand{\sgff}{quasidegree}
\newcommand{\Sgf}{\Sgff\ }
\newcommand{\Sgff}{Quasidegree}
\newcommand{\inducedfiltrn}{\inducedfiltrnn\ }
\newcommand{\inducedfiltrnn}{induced filtration}
\newcommand{\Inducedfiltrn}{\Inducedfiltrnn\ }
\newcommand{\Inducedfiltrnn}{Induced filtration}
\newcommand{\InducedFiltrn}{\InducedFiltrnn\ }
\newcommand{\InducedFiltrnn}{Induced Filtration}
\newcommand{\genfiltrn}{\genfiltrnn\ }
\newcommand{\genfiltrnn}{filtration}
\newcommand{\Genfiltrn}{\Genfiltrnn\ }
\newcommand{\Genfiltrnn}{Filtration}
\newcommand{\gengf}{\gengff\ }
\newcommand{\gengff}{\gff}
\newcommand{\Gengf}{\Gengff\ }
\newcommand{\Gengff}{\gff}
\newcommand{\gensgf}{\gensgff\ }
\newcommand{\gensgff}{\sgff}
\newcommand{\Gensgf}{\Gensgff\ }
\newcommand{\Gensgff}{\sgff}
\newcommand{\inducedgenfiltrn}{\inducedgenfiltrnn\ }
\newcommand{\inducedgenfiltrnn}{induced \genfiltrnn}
\newcommand{\Inducedgenfiltrn}{\Inducedgenfiltrnn\ }
\newcommand{\Inducedgenfiltrnn}{Induced \genfiltrnn}
\newcommand{\InducedGenFiltrn}{\InducedGenFiltrnn\ }
\newcommand{\InducedGenFiltrnn}{Induced \Genfiltrnn}

\newcommand{\simpleatzero}{\simpleatzeroo\ }
\newcommand{\simpleatzeroo}{compact}
\newcommand{\nonnegative}{\nonnegativee\ }
\newcommand{\nonnegativee}{non-negative}
\newcommand{\complete}{\completee\ }
\newcommand{\completee}{complete}

\newcommand{\thechar}{\nu}
\newcommand{\thering}{\p}
\newcommand{\mdring}{\ensuremath{\thering^\thechar}}
\newcommand{\ringd}{\ensuremath{\thering_d}}
\newcommand{\ringdminus}{\ensuremath{\thering_{d^{-}}}}
\newcommand{\mdringbar}{\ensuremath{\overline{\thering}^\thechar}}

\newcommand{\filtrationchar}{\ensuremath{\mathcal{F}}}
\newcommand{\filtrationring}{\ensuremath{A}}
\newcommand{\profing}{\profingg{\filtrationring}{\filtrationchar}}
\newcommand{\profingg}[2]{\ensuremath{{#1}^{#2}}}
\newcommand{\profinggg}[1]{\profingg{\filtrationring}{#1}}
\newcommand{\profingy}[1]{\profingg{\filtrationring}{\filtrationchar_{#1}}}
\newcommand{\gring}{\gringg{\filtrationring}{\filtrationchar}}
\newcommand{\gringg}[2]{\ensuremath{\gr {#1}^{#2}}}
\newcommand{\gringgg}[1]{\gringg{\filtrationring}{#1}}
\newcommand{\gringy}[1]{\gringg{\filtrationring}{\filtrationchar_{#1}}}

\newcommand{\ld}{\mathfrak{L}}
\newcommand{\xbar}{\bar X}

\title[On projective completions determined by 'degree-like' functions]{On projective completions of affine varieties determined by 'degree-like' functions}
\author{Pinaki Mondal}
\address{Department of Mathematics\\
University of Toronto \\
Toronto, ON}
\email{pinaki@math.toronto.edu}

\thanks{The author expresses his gratitude to his advisor Professor Pierre Milman for posing the questions and guidance throughout his work.}

\subjclass[2000]{14A10}
\keywords{Completion, Projective completion, Degree like function, semidegree, quasidegree, affine Bezout type, Degree of completion}

\begin{abstract}
We study projective completions of affine algebraic varieties which are given by filtrations, or equivalently, `degree like functions' on their rings of regular functions. For a quasifinite polynomial map $P$ (i.e. with all fibers finite) of affine varieties, we prove that there are completions of the source that do not add points at infinity for $P$ (i.e. in the intersection of completions of the hypersurfaces corresponding to a generic fiber and determined by the component functions of $P$). Moreover we show that there are `finite type' completions with the latter property, determined by the maximum of a finite number of `semidegrees', i.e. maps of the ring of regular functions excluding zero, into integers, which send products into sums and sums into maximas (with a possible exception when the summands have the same semidegree). We characterize the latter type completions as the ones for which the ideal of the `hypersurface at infinity' is radical. Moreover, we establish a one-to-one correspondence between the collection of minimal associated primes of the latter ideal and the unique minimal collection of semidegrees needed to define the corresponding degree like function. We also prove an `affine Bezout type' theorem for quasifinite polynomial maps $P$ which admit semidegrees such that corresponding completions do not add points at infinity for $P$.
\end{abstract}

\maketitle

\section*{Introduction} \label{sec-intro}
Let $X$ be an affine algebraic variety of dimension $n$ over an algebraically closed field $\kk$. A projective completion of $X$ is an open immersion given by an algebraic morphism $\psi: X \into Z$ of $X$ onto a dense open subset of an algebraic subvariety $Z$ of some projective space $\pp^N$. The point of departure for the study in this article is the (well known) observation that {\em filtrations} on the coordinate ring $\kk[X]$ of $X$ correspond to projective completions of $X$. Giving a filtration $\scrF$ on $\kk[X]$, on the other hand, is equivalent to defining a {\em degree like function} $\delta: \kk[X] \to \zz$ which satisfies the following properties:
\begin{enumerate}
\item \label{deg1} $\delta(f+g) \leq \max\{\delta(f), \delta(g)\}$ for all $f, g \in \kk[X]$, with $<$ in the preceding equation implying $\delta(f) = \delta(g)$.
\item \label{deg2} $\delta(fg) \leq \delta(f) + \delta(g)$ for all $f, g \in \kk[X]$.
\end{enumerate}
The notion of a degree like function generalizes the usual and weighted degrees on the polynomial ring (which correspond respectively to the usual and weighted projective completions of the affine space). But observe that both the usual and weighted degrees satisfy property \ref{deg2} with exact {\em equality} instead of the inequality, and therefore are examples of a special class of degree like functions which we call {\em semidegrees}.\\

Given a projective completion $Z$ of $X$ determined by a degree like function $\delta$, we define a `normalized' degree like function $\bar \delta$ which is a maximum of finitely many semidegrees. When $X$ is normal, the corresponding completion $\bar Z$ is isomorphic to the normalization of $Z$. The construction of $\bar \delta$ from $\delta$ generalizes the construction of the normal {\em toric completion} $X_\scrP$ of $\ntorus$ determined by a convex integral polytope $\scrP$ from the toric variety determined by an arbitrary finite subset $S$ of integral points in $\scrP$ whose convex hull is $\scrP$. The irreducible components of $X_\scrP\setminus\ntorus$ correspond to facets of $\scrP$. Similarly, we establish a one-to-one correspondence between the components of the $\bar Z \setminus X$ and the unique minimal collection of semidegrees needed to define $\bar\delta$.\\

This work started as a project to understand `affine Bezout type theorems' which provide formulae for the number of solutions of systems of equations on affine varieties, the systems being generic in some suitable sense. Curiously, constructions in \cite{kush-poly-bezout}, \cite{bern}, \cite{khovanskii-genus}, \cite{rojas-convex}, \cite{rojas-toric}, \cite{rojas-wang} have one property in common - they involve completions $\psi: X \to Z$ of the affine variety $X$ which satisfy the following property with respect to some polynomial map $f = (f_1, \ldots, f_n): X \to \cc^n$ with finite fibers: 
\begin{align}
\label{bezout-completion-property}
\parbox[c]{0.85\textwidth}{for generic $a = (a_1, \ldots, a_n) \in \cc^n$, $\bar{H}_1(a) \cap \cdots \cap \bar{H}_n(a) \cap (Z\setminus X) = \emptyset$, where $H_i(a) := \{x \in X: f_i(x) = a_i\}$ and $\bar H_i(a)$ is the closure of $H_i(a)$ in $Z$ for all $1 \leq i \leq n$.}  \tag{$*$}
\end{align}
This observation suggests the following definitions: for finitely many closed subvarieties $V_1, \ldots, V_k$ of $X$, a completion $\psi: X \into Z$ {\em preserves the intersection of $V_1, \ldots, V_k$ at $\infty$} if $\bar{V}_1 \cap \cdots \cap \bar{V}_k \cap X_\infty =  \emptyset$, where $X_\infty := Z\setminus X$ is the set of `points at infinity' and $\bar{V}_j$ is the closure of $V_j$ in $Z$ for every $j$. Given a polynomial map $f=(f_1, \ldots, f_q):X \to \affine[q]{\kk}$, $a = (a_1, \ldots, a_q) \in f(X)$ and a completion $\psi$, we say that $\psi$ {\em preserves the fiber $\finv(a)$ at $\infty$} if $\psi$ preserves the intersection of the hypersurfaces $H_i(a) := \{x \in X: f_i(x) = a_i\}$, $i = 1, \ldots, q$. Finally, when $f$ is quasifinite, i.e. has only finite fibers, we say that completion $\psi$ {\em preserves $f$ at $\infty$} provided it satisfies \ref{bezout-completion-property}, i.e. for all $a$ in a non-empty Zariski open subset of $f(X)$, $\psi$ preserves $\finv(a)$ at $\infty$. We prove in this article that given any polynomial map $f:X \to \ank$ with finite fibers, there exists a degree like function $\delta$ on $\kk[X]$ such that $\delta$ is the maximum of finitely many semidegrees and the corresponding completion of $X$ satisfies \ref{bezout-completion-property}. When $\delta$ is itself a semidegree, we derive an affine Bezout type formula for the size of generic fibers of $f$ in terms of degree $D$ of a $d$-uple completion of the resulting variety. Using \cite{khovanskii-kaveh}, we provide a description of $D$ in terms of the volume of a convex body determined by $\delta$. We also describe an iterative procedure (starting with a weighted degree) of a construction of a semidegree and, for $X= \ank$, provide a simple algebraic formula for $D$.\\

Our article is organized as follows. In section \oldref{subsec-filtrintro-defn} we define the notion of a filtration and give a characterization of the completions coming from filtrations. After several basic examples of filtrations and corresponding completions in section \oldref{subsec-filtrintro-examples}, we give an example of a projective completion of an affine variety $X \neq \ank$ which does not come from a filtration. On the other hand, we ask the following
\begin{question*}
Does $\ank$ admit a projective completion not induced by a filtration?
\end{question*}
Section \oldref{subsec-filtrintro-existence} is where we consider the question: ``given a collection of subvarieties of $X$ with finite intersections, when does a completion of $X$ preserve their intersection at $\infty$?'' We prove in this section that given a map $f: X \to \ank$ with finite fibers, there is a filtration on $\kk[X]$ such that the corresponding completion satisfy \ref{bezout-completion-property}. We introduce degree like functions corresponding to filtrations in section \oldref{subsec-semiquasintro} and present examples of completions determined by semidegrees and {\em quasidegrees}, which are the maxima of finitely many semidegrees. Here we also describe an `iterated' recipe of producing new semidegrees from an old one. In section \oldref{subsec-semiquasidegree-properties} we prove our main results on projective completions determined by quasidegrees. Our first theorem classifies the filtrations determined by semi- and quasidegrees. As a corollary we deduce that for a completion $\psi: X \into Z$ given by a quasidegree $\delta$, the irreducible components of $X_\infty := Z\setminus X$ are in a one-to-one correspondence with the unique minimal collection of semidegrees defining $\delta$. Given an arbitrary completion which comes from a filtration and preserves the intersection at $\infty$ of a collection of subvarieties, we show in our (main) existence theorem that there is a completion determined by a quasidegree which preserves the intersection at $\infty$ of the subvarieties in the collection. We make use of the latter to conclude the existence of completions determined by quasidegrees which preserve a given quasifinite polynomial map $P$ at $\infty$. Not all quasifinite maps admit completions of the latter type determined by a  quasidegree which is also a semidegree, as our example of section \oldref{subsec-semi-ctr-example} shows. \\

In section \oldref{sec-bezout} we prove the Bezout theorem for a semidegree $\delta$ in terms of the degree $D$ of the resulting projective variety in an appropriate ambient space. When $\kk = \cc$, we give a description of $D$ in terms of the volume of a convex body in $\rr^n$ using \cite{khovanskii-kaveh}. For an `iterated' semidgree we present a simple formula for $D$. Finally, in the appendix we provide a proof that Bernstein's formula \cite{bern} for counting zeros of a Laurent polynomial system holds with an equality if and only if the completion $\ntorus \into X_\scrP$ satisfies \ref{bezout-completion-property}, where $\scrP$ is the Minkowski sum of the supporting polytopes of components of $f$. The results of this article have been announced in \cite{announcement}.

\section{Filtrations} \label{sec-filtrintro}
\subsection{Definitions and Preliminaries} \label{subsec-filtrintro-defn}
\newcommand{\af}{\profing}
\newcommand{\xf}{X^\scrF}
Throughout this article $A$ will be a finitely generated algebra over $\kk$.
\begin{defn*} A \genfiltrn $\scrF$ on $A$ is a family $\{F_i:\ i \in \zz\}$ of $\kk$-vector subspaces of $A$ such that
\begin{enumerate}
\item $F_i \subseteq F_{i+1}$ for all $i \in \zz$, \label{genfiltrn-prop-1}
\item $1 \in F_0$, \label{genfiltrn-prop-2}
\item $A = \bigcup_{i \in \zz}F_i$, and \label{genfiltrn-prop-3}
\item $F_iF_j \subseteq F_{i+j}$ for all $i,j \in \zz$. \label{genfiltrn-prop-4}
\end{enumerate}
\end{defn*}

\begin{rem*}
We could replace property \ref{genfiltrn-prop-1} by
\begin{enumerate}
\renewcommand{\labelenumi}{\arabic{enumi}$'$.}
\item $1 \in F_1$, \label{genfiltrn-prop-1'}
\end{enumerate}
since (\oldref{genfiltrn-prop-1}$'$) and \ref{genfiltrn-prop-4} imply \ref{genfiltrn-prop-1}.
\end{rem*}

Filtration $\scrF$ is called {\em non-negative} if $F_i = 0$ for all $i < 0$. Associated to each filtration $\scrF$ there are two graded rings:
\begin{align*}
\af &:= \bigoplus_{i \in \zz} F_i\quad \text{and} \\
\gring &:= \bigoplus_{i \in \zz} (F_i/F_{i-1}).
\end{align*}
We denote a copy of $f \in F_d$ in the $d$-th graded component of $\af$ by $(f)_d$. $\af$ is given the structure of a graded $\kk$-algebra with multiplication defined by: $$(\sum_d(f_d)_d)(\sum_e(g_e)_e) := \sum_k\sum_{d+e = k}(f_dg_e)_k.$$
$\scrF$ is called a {\em finitely generated} \genfiltrn if $\profing$ is a finitely generated $\kk$-algebra.\\

Let $t$ be an indeterminate over $A$. Then there is an isomorphism
\begin{align}\label{t-isomorphism}
\af \cong \sum_{i \in \zz} F_it^i \subseteq A[t,t^{-1}]
\end{align}
which maps $(1)_1 \to t$. The following property of \profing\ is a straightforward corollary of this isomorphism.

\begin{lemma} \label{profing-domain-lemma}
\profing\ is an integral domain if and only if $A$ is an integral domain. \qed
\end{lemma}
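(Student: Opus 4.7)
The plan is to use the isomorphism \eqref{t-isomorphism} as a black box in both directions; the lemma really says nothing more than that a subring of a ring inherits the domain property, combined with the observation that the image of the isomorphism captures enough of $A[t,t^{-1}]$ to detect zero divisors of $A$.

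For the ``if'' direction, I would assume $A$ is an integral domain. Then $A[t,t^{-1}]$, being a localization of the polynomial ring $A[t]$, is a domain. The subring $\sum_{i \in \zz} F_i t^i \subseteq A[t,t^{-1}]$ is therefore a domain, and by the isomorphism \eqref{t-isomorphism} so is $\profing$.

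For the ``only if'' direction, I would argue by contrapositive. Suppose $f,g \in A$ are nonzero with $fg = 0$. By property \ref{genfiltrn-prop-3} of the definition there exist $i,j \in \zz$ with $f \in F_i$ and $g \in F_j$. The homogeneous elements $(f)_i, (g)_j \in \profing$ are nonzero (their images $ft^i, gt^j$ in $A[t,t^{-1}]$ under the isomorphism \eqref{t-isomorphism} are nonzero), but by the definition of multiplication in $\profing$ we have $(f)_i (g)_j = (fg)_{i+j} = 0$. Hence $\profing$ has zero divisors.

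There is no real obstacle: once the isomorphism \eqref{t-isomorphism} is granted, both implications are immediate. The only thing to double-check while writing it up is that the isomorphism really is an isomorphism of rings (not merely of graded vector spaces), so that the multiplicative structure is transported correctly; this is built into the definition of multiplication in $\profing$ given just before the lemma.
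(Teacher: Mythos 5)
Your proof is correct and is exactly the argument the paper has in mind: the paper dismisses the lemma as a ``straightforward corollary'' of the isomorphism \ref{t-isomorphism}, and your write-up simply makes both directions explicit (embedding $\profing$ into the domain $A[t,t^{-1}]$, and transporting a zero-divisor pair $f,g$ of $A$ to $(f)_i(g)_j=(fg)_{i+j}=0$ in $\profing$). No gaps; the one point you flag --- that \ref{t-isomorphism} is a ring isomorphism --- is indeed the only thing to check, and it holds by the definition of multiplication in $\profing$.
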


\begin{fact*}
Let $S = \dsum_{i\geq 0} S_i$ be a graded ring such that $S_0 = \kk$ and also $S$ be an integral domain which is finitely generated as an algebra over $\kk$. Then $\proj S$ is a projective algebraic variety over $\kk$. \qed
\end{fact*}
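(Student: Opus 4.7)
The plan is to exhibit $\proj S$ as an irreducible closed subset of some projective space, by first passing to a Veronese subring to arrange generation in degree one, and then realizing that subring as a graded quotient of a polynomial ring.

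First, since $S$ is a finitely generated $\kk$-algebra with $S_0 = \kk$, I would pick finitely many homogeneous generators $f_0,\ldots,f_N$ of $S$ of positive degrees $d_0,\ldots,d_N$. By standard graded commutative algebra, there exists an integer $d \geq 1$ (one may take $d$ a sufficiently large common multiple of $d_0,\ldots,d_N$) such that the Veronese subring $S^{(d)} := \dsum_{k \geq 0} S_{kd}$, regraded so that $(S^{(d)})_k = S_{kd}$, is generated as a $\kk$-algebra by its degree-one component $S_d$. Moreover, the inclusion $S^{(d)} \hookrightarrow S$ induces a scheme-theoretic isomorphism $\proj S \cong \proj S^{(d)}$, obtained by exhibiting the bijection $\ppp \mapsto \ppp \cap S^{(d)}$ between homogeneous primes not containing the respective irrelevant ideals, and comparing the structure sheaves on the standard affine cover.

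Next, fix a finite $\kk$-basis $g_1,\ldots,g_M$ of $S_d$ and form the graded surjection
$$\phi : \kk[y_1,\ldots,y_M] \onto S^{(d)},\quad y_j \mapsto g_j,$$
where the source carries its standard grading. Its kernel $I$ is a homogeneous ideal, and since $S$ (hence its subring $S^{(d)}$) is an integral domain, $I$ is a homogeneous prime. Consequently $\proj S^{(d)} \cong V(I) \subseteq \pp^{M-1}$ is an irreducible closed subvariety of $\pp^{M-1}$, so a projective algebraic variety over $\kk$; combined with the Veronese isomorphism of the previous step, this gives the desired conclusion for $\proj S$. The two standard inputs I rely on---existence of an effective Veronese degree $d$, and the Veronese identification of $\proj$---are the only nontrivial ingredients, and I expect they form the main (though well-known) obstacle in writing out a fully detailed proof.
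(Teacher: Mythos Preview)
Your argument is correct and follows the standard route to this well-known fact. Note, however, that the paper does not actually prove this statement: it is recorded as a \emph{Fact} and closed immediately with a \qed, so there is no proof in the paper to compare against. Your Veronese-plus-graded-quotient approach is exactly the expected justification.
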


It follows via the preceding fact that $\proj \af$ is a projective, and hence complete, variety if $\scrF$ is non-negative, finitely generated and $F_0 = \kk$. In view of this fact we make the following

\begin{defn*}
A filtration $\scrF = \{F_d\}_{d \in \zz}$ on $A$ is called {\em \completee} if it is \nonnegativee, finitely generated and $F_0 = \kk$.
\end{defn*}

The following proposition connects filtrations on $A$ with projective completions of $\spec A$.

\begin{prop}\label{prop-spec-completion}
If $A$ is a $\kk$-algebra and $\scrF$ is a \nonnegative filtration on $A$, then there is an open immersion $\psi_\scrF$ of $\spec A$ onto an open dense subscheme of $\proj \profing$. The complement of $\spec A$ in $\proj \profing$ is a hypersurface. Conversely, given any graded ring $S = \dsum_{i \geq 0} S_i$ and an open immersion $\phi: \spec A \into \proj S$ such that $\spec A$ is dense in $\proj S$ and $\proj S \setminus \spec A$ is a hypersurface, there is a \nonnegative filtration $\scrF$ on $A$ and a commutative diagram as follows:
\begin{center}\
$\begin{array}{c}
\xymatrix{
\spec A \ar[d]|{\psi_\scrF} \ar[r]^\cong 	& \spec A \ar[d]|\phi\\
\proj \profing \ar[r]			& \proj S}
\end{array}$
\end{center}
where the top horizontal map is the identity and the bottom horizontal map is a closed immersion induced by an isomorphism of $\profing$ with $S/I$ for some ideal $I$ of $S$ contained in the nilradical of $S$.
\end{prop}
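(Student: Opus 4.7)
The plan is to handle the two directions of the proposition separately.

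For the forward direction, I would use the isomorphism of equation \ref{t-isomorphism} to realize $\profing$ concretely as a graded subring of $A[t]$ with $(1)_1 \leftrightarrow t$. The key computation is that the distinguished affine open $D_+((1)_1) \subseteq \proj \profing$ has coordinate ring $A$: any $f \in A$ lies in some $F_k$ and so equals $(f)_k \cdot ((1)_1)^{-k}$ in the localization, giving $\profing[(1)_1^{-1}] = A[t,t^{-1}]$, whose degree-zero part is $A$. This yields $\psif: \spec A \cong D_+((1)_1) \hookrightarrow \proj \profing$, and the complement $V_+((1)_1)$ is a hypersurface because it is cut out by the single homogeneous element $(1)_1 \in \profing_1$. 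Density would follow from $(1)_1$ being a non-zerodivisor in $\profing$ (inherited from $t \in A[t]$ via the embedding), hence avoiding every minimal prime.

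For the reverse direction, the hypothesis provides a hypersurface complement $V_+(s)$ for some homogeneous $s \in S_d$, so $\phi(\spec A) = D_+(s)$ and $A \cong (S[s^{-1}])_0$. I would define
\[
F_k := \{y/s^k : y \in S_{kd}\} \subseteq A
\]
and verify by direct calculation that $\{F_k\}_{k\geq 0}$ is a non-negative filtration: $F_k \subseteq F_{k+1}$ via $y/s^k = ys/s^{k+1}$, multiplicativity from $(y/s^k)(y'/s^l) = yy'/s^{k+l}$, and exhaustiveness from the definition of $A$ as a localization. Next I would produce the graded $\kk$-algebra surjection $\Phi : S^{(d)} := \bigoplus_k S_{kd} \twoheadrightarrow \profing$ by $\Phi(y) := (y/s^k)_k$ for $y \in S_{kd}$, giving $\profing \cong S^{(d)}/I$ with $I := \ker \Phi$. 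In the case $d = 1$ (to which the general case reduces by replacing $S$ with its $d$-th Veronese subring, under which $\proj S$ is identified with $\proj S^{(d)}$), this is the asserted $\profing \cong S/I$. Commutativity of the diagram follows automatically from $\Phi(s) = (1)_1$: the closed immersion $\proj\profing \hookrightarrow \proj S$ sends $D_+((1)_1)$ onto $D_+(s) = \phi(\spec A)$, while the top map is the identity.

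The main obstacle will be verifying that $I \subseteq \sqrt{0}$. By construction of $\Phi$, $x \in I$ iff $s^m x = 0$ in $S$ for some $m \geq 0$. The density hypothesis on $\phi(\spec A) = D_+(s)$ translates into $s$ lying outside every minimal homogeneous prime $\ppp$ of $S$: if $s \in \ppp$, then the irreducible component $V_+(\ppp)$ would be contained in $V_+(s) = \proj S \setminus \phi(\spec A)$, contradicting density. Hence $s^m x = 0$ forces $x \in \ppp$ for every such $\ppp$, giving $x \in \bigcap_\ppp \ppp = \sqrt 0$ as required.
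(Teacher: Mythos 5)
Your proposal is correct and, in the converse direction, essentially reproduces the paper's argument: the same reduction to a degree-one hypersurface by passing to the truncated (Veronese) subring, the same filtration $F_k$ defined as the image of $S_{kd}/s^k$ under the isomorphism $A \cong S_{(s)}$, the same graded surjection $\Phi$ onto $A^{\scrF}$ with kernel equal to the $s$-torsion, and the same use of density to push the kernel into the nilradical; your observation that $\ker\Phi \subseteq \sqrt{0}$ is argued only through the minimal primes $\ppp$ whose $V(\ppp)$ is a nonempty component (an irrelevant minimal prime contains $s$, so density gives no contradiction there) is harmless for kernel elements of positive degree, which lie in every irrelevant homogeneous prime anyway, and this is precisely the same gloss the paper makes when it concludes from $Z_g = \emptyset$ that $g \in \eta(S)$. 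Where you genuinely diverge is the density of $\spec A$ in $\proj A^{\scrF}$ in the forward direction. The paper shows directly that any basic open set disjoint from the image of $\spec A$ is empty: if every homogeneous prime contains $(1)_1$ or $G = (g)_d$, then $G\cdot(1)_1$ is nilpotent, and $(g^k)_{dk+k} = 0$ forces $g^k = 0$, so $G$ is nilpotent and its basic open set is empty; this computation needs no discussion of components. You instead note that $(1)_1$ corresponds to $t$ under $A^{\scrF} \cong \sum_i F_i t^i \subseteq A[t,t^{-1}]$, hence is a non-zerodivisor and so avoids every minimal prime. That is a clean alternative, but in a Proj (unlike a Spec) you should make explicit the remaining step: the components of $\proj A^{\scrF}$ are the $V(\ppp)$ for the relevant minimal primes $\ppp$, and for such a $\ppp$ not containing $(1)_1$ the intersection of $V(\ppp)$ with the basic open set of $(1)_1$ is the spectrum of the degree-zero localization of the domain $A^{\scrF}/\ppp$ at the nonzero positive-degree element $\bar t$, which is a nonzero ring; hence the open set meets every component and is dense. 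With that sentence added, your route is complete, and it trades the paper's nilradical computation for a slightly more structural argument via non-zerodivisors and components.
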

\begin{rem*}
\mbox{}
\begin{itemize}
\item By a {\em hypersurface} in $\proj S$ for an arbitrary graded ring $S$ we just mean a closed subscheme given by $V(f)$ for some $f \in S$.
\item The first part of the theorem is well known. 
\end{itemize}
\end{rem*}

\begin{proof} Let us write $X$ for $\spec A$ and $Y$ for $\proj \profing$. Recall that basic open sets in $Y$ are given by $Y_G = \{Q \in Y | G \not \in Q\} = \spec \af_{(G)}$, where $G$ ranges over the homogenous elements in \profing, and $\profing_{(G)}$ is the subring of the local ring $\profing_{G}$ consisting of degree zero homogenous elements. Identify $\af$ with $\sum_{i \in \zz} F_it^i$ via \ref{t-isomorphism}. It follows that $\af_{(t)} = \{gt^d/t^d: g \in F_d \subseteq A,\ d \geq 0\} \cong \{g: g \in F_d \subseteq A\} = A$ as a subring of $A[t,t^{-1}]$; hence $X \cong Y_{t}$ and $Y\setminus X = V(t)$.\\

Now we prove that $Y_{t}$ is dense in $Y$. Let $G=(g)_d \in \profing$ be a homogenous element such that $Y_{t}\cap Y_G = \emptyset$, or equivalently, $V(t) \cup V(G) = \proj \af$. Then for all homogenous prime ideals $Q$ of \profing, either $t \in Q$, or $G \in Q$, so that
$$Gt \in \bigcap_{Q \in \proj \profing} Q \quad= \eta(\profing),$$
where $\eta(\profing)$ is the nilradical of $\profing$. (The last equality holds because the nilradical of a graded ring is a homogenous ideal and hence lies in the ideal $Q^h$ generated by homogenous elements of every prime ideal $Q$. But it is straightforward to see that if $Q$ is prime, then $Q^h$ is also prime.) Pick $k$ such that $(Gt)^k = (g^k)_{dk+k} = 0$. Then $g^k = 0 \in A$, and hence $G^k = (g^k)_{dk} = 0 \in \profing$. Therefore $G$ is nilpotent in $\profing$ and $Y_G = \emptyset$. Thus $Y_{t}$ is dense in $Y$, and the first assertion is proved.\\

We come to the last assertion of the theorem now. Let $S = \dsum_{i \geq 0}S_i$ be a graded ring, let $Z = \proj S$. Let $\phi: \spec A \to Z$ be an open immersion of $\spec A$ onto a dense open subscheme of $Z$ of the form $Z_f$ for some homogenous $f \in S$. Let the degree of $f$ be $d$. If we consider the $d$-th truncated ring $S^{[d]} \subseteq S$ defined by
$$S^{[d]} := \dsum_{d|k} S_k = \dsum_{k \geq 0} S_{kd},$$
then $\proj S \cong \proj S^{[d]}$, hence we can safely replace $S$ with $S^{[d]}$, and assume that degree of $f$ is $1$. Now, since $Z_f \cong \spec S_{(f)},$ and since morphisms of affine schemes are uniquely determined by the corresponding homomorphisms of rings (\cite{mumred}, theorem II.2.1), the above isomorphism of schemes induces an isomorphism of rings $\phi^*:S_{(f)} \cong A$. Now, for each $g \in A$, $(\phi^*)^{-1}(g) = a/f^k$ for some $k \geq 0$ and $a \in S_{k}$. Define
\begin{align*}
F_k := \phi^*(S_k/f^k) = \{g \in R|\ (\phi^*)^{-1}(g) \in S_k/f^k\}.
\end{align*}
Then it is easy to see that $\scrF = \{F_i\}_{i \geq 0}$ is a filtration on $A$. By means of this filtration we construct as usual the ring $\profing := \dsum_{d \geq 0}F_d$. Now $\phi^*: S_{(f)} \to A$ induces a mapping $\Phi^*:S \to \profing$. This map is defined componentwise, and it takes a $g \in S_d$ to $(\phi^*(g/f^d))_d$, i.e. to the copy of $\phi^*(g/f^d)$ in $\profing_d$. It is easily seen that $\Phi^*$ is a surjective homomomorphism of graded rings, so that the induced map $\Phi: \proj \profing \to \proj S$ is a closed immersion.

\begin{claim*} $\ker\Phi^* \subseteq \eta(S)$, where $\eta(S)$ is the nilradical of $S$. \end{claim*}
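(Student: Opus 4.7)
The plan is to show that each homogeneous element of $\ker\Phi^*$ lies in every minimal prime of $S$; this suffices because $\Phi^*$ is a graded homomorphism (so $\ker\Phi^*$ is a homogeneous ideal) and $\eta(S)$ equals the intersection of all minimal primes. First, I would unpack the hypothesis on $g \in S_d$: the condition $\Phi^*(g) = 0$ means $(\phi^*(g/f^d))_d = 0$ in $F_d$, and since $\phi^*\colon S_{(f)} \to A$ is a ring isomorphism, $g/f^d = 0$ in $S_{(f)}$, so $f^N g = 0$ in $S$ for some $N \geq 1$.

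Next I would fix a minimal (hence automatically homogeneous) prime $\mathfrak{p}$ of $S$ and show $g \in \mathfrak{p}$. From $f^N g \in \mathfrak{p}$ and primality, either $f \in \mathfrak{p}$ or $g \in \mathfrak{p}$. If $\mathfrak{p}$ is relevant (i.e.\ $\mathfrak{p} \not\supseteq S_+$), then $\mathfrak{p}$ is the generic point of an irreducible component of $\proj S$, and the hypothesis that $Z_f = \proj S \setminus V(f)$ is dense in $\proj S$ forbids $V(f)$ from containing that component; hence $f \notin \mathfrak{p}$, forcing $g \in \mathfrak{p}$. This step mirrors the density argument already used in the first half of the proof to show $Y_t$ is dense in $Y$. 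If instead $\mathfrak{p} \supseteq S_+$ and $d > 0$, then $g \in S_+ \subseteq \mathfrak{p}$ automatically. Combining the two cases yields $g \in \bigcap_{\mathfrak{p}\text{ minimal}}\mathfrak{p} = \eta(S)$.

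The main obstacle I anticipate is the degree-zero piece of $\ker\Phi^*$: if some minimal prime $\mathfrak{p}$ of $S$ contains $S_+$---an embedded component of $\spec S$ lying entirely at infinity relative to $\proj S$---then a degree-zero element of the kernel need not land in $\mathfrak{p}$. The density hypothesis does not directly rule out such components, so one must invoke the scheme-theoretic content of the statement that $\spec A$ is a dense open in $\proj S$ with hypersurface complement to exclude them (or, equivalently, reduce modulo such embedded primes) before concluding $\ker\Phi^* \subseteq \eta(S)$.
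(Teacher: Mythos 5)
Your core argument is the paper's: you convert $\Phi^*(g)=0$ into $f^Ng=0$ in $S$ through the isomorphism $\phi^*\colon S_{(f)}\to A$, and then let the density of the image of $\spec A$ force $g$ into the primes visible in $\proj S$, the positive-degree case of the irrelevant primes being automatic. One structural remark: the paper runs the density step with open sets rather than prime by prime --- $f^Ng=0$ gives $Z_f\cap Z_g=\emptyset$, and since $Z_g$ is open and $Z_f$ is dense, $Z_g=\emptyset$, so \emph{every} relevant homogeneous prime contains $g$, whether or not it contains $f$. Your version instead needs ``$V(f)$ contains no irreducible component of $\proj S$,'' which presupposes that every component has nonempty interior; that holds when there are only finitely many minimal primes (e.g.\ $S$ noetherian) but not for an arbitrary graded ring, which is the stated generality, so the paper's formulation is the safer one.

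The degree-zero ``obstacle'' you flag is real, and in fact the paper's own proof has the same restriction hidden in its final step (deducing $g\in\eta(S)$ from $Z_g=\emptyset$ uses $g\in S_+$, i.e.\ $d>0$). But the remedy you sketch cannot work as stated: density of $\spec A$ and the hypersurface condition on the complement say nothing about components of $\spec S$ sitting inside $V(S_+)$, and the claim genuinely fails in degree zero for a general graded $S$. For instance, take $S=\kk[x]\times\kk$ graded by $S_0=\kk\times\kk$ and $S_i=\kk x^i\times 0$ for $i\geq 1$, with $f=(x,0)$; then $\proj S$ is a single point, $A\cong S_{(f)}\cong\kk$, all hypotheses of the proposition hold, yet $(0,1)\in\ker\Phi^*$ is a non-nilpotent idempotent. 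The correct observation is simpler than what you propose: the degree-zero component of $\ker\Phi^*$ is $\{g\in S_0:\ f^kg=0\ \text{for some}\ k\geq 0\}$, i.e.\ the kernel of the natural map $S_0\to S_{(f)}\cong A$, so it vanishes as soon as $S_0$ injects into $A$ --- in particular when $S_0=\kk$ and $A\neq 0$, which is what happens in every use of this proposition in the paper --- and then $\ker\Phi^*$ is concentrated in positive degrees, where your argument (and the paper's) goes through.
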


\begin{proof}
Pick $g \in S_d$ such that $\Phi^*(g) = (\phi^*(g/f^d))_d = 0 \in \profing$. Then $\phi^*(g/f^d) = 0 \in A$. Since $\phi^*: S_{(f)} \to A$ is an isomorphism, it follows that $g/f^d = 0 \in S_{(f)}$ so that there is some $k \geq 0$ such that $f^kg = 0 \in S$. But then every prime ideal in $S$ has to contain $f$ or $g$, ie $Z_f \cap Z_g = \emptyset$. Since $Z_f\ (\cong \spec A)$ is dense in $Z$, it follows that $Z_g = \emptyset$, so that $g \in \eta(S)$.
\end{proof}

By the above claim, $\bar S:= S/\ker\Phi^* \cong \profing$, and hence $\proj \profing \cong \proj \bar S$. To see that this isomorphism is the identity map when restricted to $\spec A$, note that the maps $\phi:\spec A \to \proj S$ and $\psi_\scrF:\spec A \to \proj \profing$ are completely determined by the corresponding maps $\phi^*: S_{(f)} \to A$ and $\psi_\scrF^*:\profing_{(t)} \to A$, where $t = (1)_1 \in \profing$ as before. But the latter two maps of the rings give rise to the following commutative diagram:
\begin{center}
$\begin{array}{cc}
\xymatrix{
A \ar[r]^{1_A} 	& A\\
S_{(f)} \ar[u]|{\phi^*} \ar[r]^{\Phi^*} & \profing_{(t)} \ar[u]|{\psi_\scrF^*}} &
\xymatrix{
\phi^*(g/f^d) \ar@{|->}[r]^{1_A} 	&  \phi^*(g/f^d)\\
g/f^d \ar@{|->}[u]|{\phi^*} \ar@{|->}[r]^{\Phi^*} & \frac{(\phi^*(g/f^d))_d}{t^d} \ar@{|->}[u]|{\psi_\scrF^*}}
\end{array}$
\end{center}

Since the the top horizontal map and both of the vertical maps are isomorphisms, it follows that the bottom horizontal map must be an isomorphism too, and thus we get the desired commutative diagram.
\end{proof}

Before we proceed further, let us examine the structure of the complement of $\spec A$ in $\proj \profing$ more closely. From the proof of proposition \ref{prop-spec-completion}, we see that $\proj \profing \setminus \spec A$ is the closed subspace $V(t) := \{P \in \proj \profing |\ t \in P\}$. Now consider the natural projection $\pi: \profing \to \gring$. Homomorphism $\pi$ of graded rings is surjective and therefore it defines a map $\pi^*:\proj(\gring)\to \proj \profing$, given by $p \to \pi^{-1}(p)$, which is a closed immersion \cite[Exercise II.3.12(a)]{hart}.

\begin{claim*}$\ker\pi = \langle t \rangle$. \end{claim*}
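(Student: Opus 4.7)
The plan is to verify the claim componentwise, using the graded structure of $\profing$ and the fact that $\pi$ is a homomorphism of graded rings (so $\ker\pi$ is a homogeneous ideal) and $t = (1)_1$ is homogeneous (so $\langle t\rangle$ is homogeneous). It therefore suffices to show $(\ker\pi)_d = (\langle t\rangle)_d$ for every $d \geq 0$.

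First I would unravel the definition of $\pi$. By construction, $\pi: \profing \to \gring$ acts on the $d$-th graded piece as the quotient map $F_d \onto F_d/F_{d-1}$, sending $(f)_d \mapsto f + F_{d-1}$. Hence
\[
(\ker\pi)_d = \{(f)_d : f \in F_{d-1}\}.
\]

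Next I would describe $(\langle t\rangle)_d$. Under the isomorphism \ref{t-isomorphism}, $t$ corresponds to $(1)_1$, and multiplication by $t$ carries a degree-$(d-1)$ element $(g)_{d-1}$ with $g \in F_{d-1}$ to $(g)_d$, where now $g$ is regarded as an element of $F_d$ via the inclusion $F_{d-1} \subseteq F_d$. Therefore
\[
(\langle t\rangle)_d = \{(g)_d : g \in F_{d-1}\},
\]
which is exactly $(\ker\pi)_d$. Taking the direct sum over $d$ yields $\ker\pi = \langle t\rangle$. (For $d = 0$ both sides are zero, since $F_{-1} = 0$ in the nonnegative case being examined here.)

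There is no real obstacle; the only minor care needed is to keep straight the distinction between $f$ viewed as an element of $F_{d-1}$ and its image $(f)_d$ in the $d$-th graded piece of $\profing$ under the inclusion $F_{d-1} \subseteq F_d$, which is precisely the effect of multiplying by $t = (1)_1$.
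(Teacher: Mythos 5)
Your proof is correct and follows essentially the same route as the paper: both arguments reduce to homogeneous elements via the gradedness of $\pi$, identify the degree-$d$ part of $\ker\pi$ as $\{(f)_d : f \in F_{d-1}\}$, and use the identity $(g)_d = (g)_{d-1}(1)_1$ to conclude equality with $\langle t\rangle$.
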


\begin{proof}
Since  $t := (1)_1$ and  $1 \in F_0$ , it follows that $t \in \ker\pi$, so that $\ker\pi \supseteq \langle t \rangle.$ Also, $\pi$ a homomorphism of graded ring. Hence $\ker\pi$ is a homogenous ideal, i.e. $\ker\pi$ is generated by homogenous elements from $\profing$. Pick a homogenous $G$ in the $d$-th graded component of $\ker\pi$. Then $G = (g)_d$ for some $g \in F_{d-1}$. But then
$$G = (g)_d = (g)_{d-1}(1)_1 = (g)_{d-1}t,$$
so that $G \in \langle t \rangle$. Therefore, $\ker\pi \subseteq \langle t \rangle$.
\end{proof}

It follows that $\pi^*(\proj(\gring)) = V(t)$ \cite[Exercise II.3.12(b)]{hart}. Thus we have proven the following
\begin{prop} \label{gradedprop}
 $\proj \profing \setminus \spec A = \proj(\gring)$. \qed
\end{prop}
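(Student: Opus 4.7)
The plan is to upgrade the set-theoretic description of the complement already obtained in the proof of Proposition \oldref{prop-spec-completion} and reinterpret it via the natural quotient $\profing \onto \gring$. Recall from that proof that, after identifying $\profing$ with $\sum_i F_i t^i$ via \ref{t-isomorphism}, we have $\spec A \cong (\proj\profing)_t$, so that the complement is exactly the closed subscheme $V(t) \subseteq \proj\profing$, where $t = (1)_1$. Thus it suffices to exhibit an isomorphism $\proj(\gring) \cong V(t)$ as closed subschemes of $\proj\profing$.

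First I would introduce the canonical projection $\pi: \profing \to \gring$ which on the $d$-th graded piece is the quotient map $F_d \onto F_d/F_{d-1}$. This is by construction a surjective homomorphism of graded $\kk$-algebras, so by \cite[Exercise II.3.12(a)]{hart} it induces a closed immersion $\pi^*: \proj(\gring) \into \proj\profing$ whose image is $V(\ker\pi)$. So the whole problem reduces to identifying $\ker\pi$ as an ideal of $\profing$.

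The central step of the proof, and the only part where one really has to do anything, is the identity $\ker\pi = \langle t\rangle$. The inclusion $\langle t\rangle \subseteq \ker\pi$ is immediate: since $1 \in F_0$, we have $\pi(t) = \pi((1)_1) = 1 + F_0 = 0$ in $F_1/F_0$. For the reverse inclusion, $\pi$ is a graded ring homomorphism, so $\ker\pi$ is a homogeneous ideal and it is enough to check containment on homogeneous generators. A homogeneous element $G = (g)_d \in \ker\pi$ means precisely that $g \in F_{d-1}$, and then the identity
\[
G = (g)_d = (g)_{d-1}\cdot (1)_1 = (g)_{d-1}\cdot t
\]
exhibits $G$ as a multiple of $t$, so $G \in \langle t\rangle$. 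I do not anticipate any genuine obstacle here; the only point to be careful about is that the factorization $(g)_d = (g)_{d-1}\cdot (1)_1$ really does take place in $\profing$, which is transparent in the $A[t,t^{-1}]$ picture where it is just $g t^d = (g t^{d-1}) t$.

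Combining the two steps, $\pi^*(\proj(\gring)) = V(\ker\pi) = V(\langle t\rangle) = V(t)$ by \cite[Exercise II.3.12(b)]{hart}, and this is exactly the complement $\proj\profing \setminus \spec A$ identified in the first paragraph, yielding the claimed equality.
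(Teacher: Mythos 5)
Your proposal is correct and follows essentially the same route as the paper's own proof: identify the complement with $V(t)$ from the proof of Proposition \oldref{prop-spec-completion}, show $\ker\pi = \langle t\rangle$ via the factorization $(g)_d = (g)_{d-1}(1)_1$ for homogeneous elements of the kernel, and invoke the same Hartshorne exercise to conclude that the closed immersion induced by $\pi$ has image $V(t)$. Nothing is missing.
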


Combining the results of propositions \ref{profing-domain-lemma}, \ref{prop-spec-completion} and \ref{gradedprop}, we deduce

\begin{thm}\label{completethm}
Let $A$ be a $\kk$-algebra which is an integral domain and $\scrF$ be a filtration of $A$. If $\scrF$ is \completee, then $\proj \profing$ is a projective variety over $\kk$ and the map
$$\psi_\scrF:p \to \dsum_{d\geq 0}p \cap F_d$$
gives an open immmersion of $\spec A$ onto a dense open subscheme of $\proj \profing$. The complement of $\spec A$ in $\proj \profing$ is the hypersurface $V((1)_1)$, which is also the image of the closed immersion $\pi^*:\proj(\gring) \into \proj(\profing)$ induced by the natural projection $\pi: \profing \to \gring$.  Conversely, given any graded ring $S$ and an open immersion $\phi: \spec A \into \proj S$ such that $\spec A$ is dense in $\proj S$ and $\proj S \setminus \spec A$ is a hypersurface, there is a filtration $\scrF$ on $A$ and a commutative diagram as follows:
\begin{center}\
$\begin{array}{c}
\xymatrix{
\spec A \ar[d]|{\psi_\scrF} \ar[r]^\cong 	& \spec A \ar[d]|\phi\\
\proj \profing \ar[r]			& \proj S}
\end{array}$
\end{center}
where the top horizontal map is the identity and the bottom horizontal map is a closed immersion which induces an isomorphism $\proj \profing \cong \proj (S/\eta(S))$ where $\eta(S)$ is the nilradical of $S$. \qed
\end{thm}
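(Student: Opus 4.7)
The plan is to assemble Theorem \ref{completethm} from the three results already proved (Lemma \ref{profing-domain-lemma}, Proposition \ref{prop-spec-completion}, Proposition \ref{gradedprop}) and the Fact recalled just before the definition of a complete filtration, adding only one short radicality observation in the converse direction. Throughout, set $X := \spec A$ and $Y := \proj \profing$, and write $t := (1)_1$ as in the proof of Proposition \ref{prop-spec-completion}.

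For the forward implication, I would first apply Lemma \ref{profing-domain-lemma} to conclude that $\profing$ is an integral domain because $A$ is, and observe that completeness of $\scrF$ makes $\profing$ a non-negatively graded, finitely generated $\kk$-algebra with $F_0 = \kk$. The Fact then yields that $Y$ is a projective variety over $\kk$. Existence of the open immersion $\psi_\scrF$ onto a dense open subscheme of $Y$ and the identification of the complement as a hypersurface are immediate from the forward half of Proposition \ref{prop-spec-completion}; inspection of its proof further identifies $X$ with $Y_t$ and hence the complement $Y \setminus X$ with $V(t) = V((1)_1)$. That this complement also equals the image of the closed immersion $\pi^* : \proj \gring \into Y$ is exactly the content of Proposition \ref{gradedprop}.

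For the converse, I would invoke the second half of Proposition \ref{prop-spec-completion} to produce the filtration $\scrF$ on $A$ together with a surjective graded homomorphism $\Phi^* : S \onto \profing$ whose kernel $I$ is contained in the nilradical $\eta(S)$, along with the claimed commutative diagram. The only ingredient beyond what that proposition already delivers is the upgrade of the induced closed immersion $\proj \profing \into \proj S$ to an isomorphism $\proj \profing \cong \proj(S/\eta(S))$, which amounts to the equality $I = \eta(S)$. This is where the integrality of $A$ enters as the essential new hypothesis: by Lemma \ref{profing-domain-lemma}, $\profing \cong S/I$ is then an integral domain, so $I$ is a prime ideal of $S$ and hence contains the intersection of all primes, namely $\eta(S)$; combined with the reverse inclusion $I \subseteq \eta(S)$ already established in the proof of Proposition \ref{prop-spec-completion}, this yields $I = \eta(S)$. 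No substantive obstacle is anticipated, as every nontrivial ingredient has been proved elsewhere; the only new step is the one-line observation that primality of $I$ forces $\eta(S) \subseteq I$.
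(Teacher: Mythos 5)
Your proposal is correct and follows essentially the same route as the paper, which deduces the theorem precisely by combining Lemma \ref{profing-domain-lemma}, Proposition \ref{prop-spec-completion}, Proposition \ref{gradedprop} and the Fact preceding the definition of a complete filtration. The one step you add explicitly---that integrality of $A$ makes $\ker\Phi^*$ prime, hence equal to $\eta(S)$, upgrading the conclusion of Proposition \ref{prop-spec-completion} to $\proj \profing \cong \proj(S/\eta(S))$---is exactly the observation the paper leaves implicit.
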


Recall that the {\em homogenous coordinate ring} of a closed subvariety of a $n$-dimensional weighted projective space is the factor of the polynomial ring of $n+1$ variables by the ideal of all weighted homogeneous polynomials vanishing on this variety. Choosing a set of generators of $\af$ gives an embedding of $\proj\af$ into a weighted projective space such that the homogeneous coordinate of the image is $\af$. The following corollary is a rewording of theorem \ref{completethm} in terms of these coordinates.

\sloppy

\begin{cor}\label{completecor}
Let $X$ be an affine variety over $\kk$ and $A$ be its coordinate ring. Let $\scrF$ be a \complete filtration on $A$ such that $\profing$ is generated over $\kk$ by $(f_1)_{d_1}, \ldots,  (f_N)_{d_N}$. Then the map $\phi:X \to \aNk$ given by $\phi(x) := (f_1(x), \ldots, f_N(x))$ is a closed immersion, and there exists a commutative diagram as follows:
\begin{center}
$\begin{array}{c}
\xymatrix{
X \ar[d]_{\psi_\scrF} \ar[r]^{\phi} 	& \aNk \ar[d]^i\\
\proj \profing \ar[r]^-\Phi 		& \pp^N(\kk;1,d_1,\ldots,d_N)}
\end{array}$
\end{center}
where $i$ is the natural injection of $\aNk$ into the weighted projective space $\pp^N(\kk;1,d_1,\ldots,d_N)$, and $\Phi$ is a closed immersion that maps $\proj \profing$ isomorphically onto $\bar X$, the closure of $X$ in $\pp^N(\kk;1,d_1,\ldots,d_N)$. Moreover, $\Phi$ induces an isomorphism between $\profing$ and the homogenous coordinate ring of $\bar X$. Conversely, given any closed immersion $\phi:X \into \aNk$, and any $N$ positive integers $d_1, \ldots, d_N$, there exists a \complete filtration $\scrF$ on $A$ and a closed immersion $\Phi: \proj \profing \into \pp^N(\kk;1,d_1,\ldots,d_N)$ such that the above diagram is commutative. Also, $\Phi$ induces a graded ring isomorphism between the graded ring of $\bar X \subseteq \pp^N(\kk;1,d_1,\ldots,d_N)$ and $\profing$, and hence an isomorphism of $\proj \profing$ and $\bar X$. \qed
\end{cor}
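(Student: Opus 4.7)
The plan is to derive this corollary directly from Theorem \ref{completethm} by making the generators of $\profing$ explicit as coordinates on a weighted projective space. I would set $S := \kk[T_0, T_1, \ldots, T_N]$ with weights $\deg T_0 = 1$ and $\deg T_i = d_i$, so that $\pp^N(\kk;1,d_1,\ldots,d_N) = \proj S$. The chosen generators determine a surjective graded $\kk$-algebra homomorphism $S \onto \profing$ sending $T_0 \mapsto (1)_1$ and $T_i \mapsto (f_i)_{d_i}$, which induces the desired closed immersion $\Phi: \proj \profing \into \proj S$. Since $A$, and hence $\profing$ by Lemma \ref{profing-domain-lemma}, is an integral domain, the kernel of this homomorphism is prime, so $\profing$ is canonically the homogeneous coordinate ring of the image of $\Phi$.

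Next I would verify commutativity by restricting to the affine chart $\{T_0 \neq 0\} \cong \aNk$ of $\proj S$. By Theorem \ref{completethm}, $\psif$ identifies $X$ with the open subset of $\proj \profing$ on which $(1)_1 \neq 0$, and after localizing at $T_0$ and passing to the degree-zero part, the homomorphism $S \to \profing$ becomes $T_i/T_0^{d_i} \mapsto f_i$; hence the restriction of $\Phi$ to this chart agrees with $\phi$. Surjectivity is preserved under localization and degree-zero truncation, so the pullback $\kk[T_1/T_0^{d_1}, \ldots, T_N/T_0^{d_N}] \onto A$ is surjective and $\phi$ is a closed immersion. Because $\psif(X)$ is dense in $\proj \profing$ and $\Phi$ is closed, $\Phi(\proj \profing)$ must then equal the closure $\bar X$ of $\phi(X)$ in $\pp^N(\kk;1,d_1,\ldots,d_N)$.

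For the converse, given a closed immersion $\phi: X \into \aNk$ with coordinates $f_1, \ldots, f_N \in A$ and positive integers $d_1, \ldots, d_N$, I would define $F_d$ to be the $\kk$-span of monomials $f_1^{a_1} \cdots f_N^{a_N}$ with $\sum_i a_i d_i \leq d$. The filtration axioms, non-negativity, $F_0 = \kk$, and finite generation of $\profing$ as a $\kk$-algebra by $(1)_1$ together with the $(f_i)_{d_i}$ are all immediate from this definition, so $\scrF$ is a \complete filtration; the forward direction then produces the claimed diagram, with $\Phi(\proj \profing) = \bar X$ by construction.

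Since the corollary is essentially a transcription of Theorem \ref{completethm} into homogeneous coordinates on a weighted projective space, I do not foresee any serious obstacle. The one subtlety worth flagging is the role of the weight-$1$ element $(1)_1 \in \profing$, which corresponds to the coordinate $T_0$ and which must be included among the generators of $\profing$ in order to realize the complement of $X$ in $\proj \profing$ as the hypersurface cut out by $T_0$.
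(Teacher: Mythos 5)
Your proposal is correct and follows essentially the same route the paper intends: the paper states this corollary as a direct rewording of Theorem \ref{completethm}, obtained by using the chosen generators to define the graded surjection $\kk[T_0,\ldots,T_N]\onto\profing$ (with $T_0\mapsto(1)_1$, $T_i\mapsto(f_i)_{d_i}$) onto the weighted projective space, which is exactly what you spell out, including the chart $\{T_0\neq 0\}$ computation and the identification of $\profing$ with the homogeneous coordinate ring of $\bar X$ via primality of the kernel. The details you fill in (surjectivity passing to degree-zero localizations for the closed immersion $\phi$, density plus closedness of $\Phi$ giving $\Phi(\proj\profing)=\bar X$, and the explicit filtration $F_d=\kk\langle f^\alpha:\sum\alpha_i d_i\leq d\rangle$ for the converse) are precisely the omitted verifications, so there is no gap.
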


\fussy
\subsection{Examples} \label{subsec-filtrintro-examples}
Now we look at some examples of \complete filtrations on the coordinate rings of affine varieties, and find the corresponding completions. Throughout this section $X$ will denote an affine variety, $A$ will denote its coordinate ring, $\scrF = \{F_d: d \geq 0\}$ will be a filtration on $A$ with the corresponding completion $\xf$ of $X$, and $\af$ will denote the homogenous coordinate ring of $\xf$. For the first four examples we set $X = \ank$ and $A = \kk[x_1,\ldots,x_n]$.

\begin{example} Define $\scrF$ on $A$ by letting $F_d$ be the set of polynomials of degree less than or equal to $d$. Identification of $F_d$ with the set of {\em homogenous} polynomials in $x_0, \ldots, x_n$ of degree $d$ gives an isomorphism $\af \cong \kk[x_0,\ldots,x_n]$. Hence $\xf$ is the usual projective space $\pp^n(\kk)$.
\end{example}

\begin{example} Let $d_1, \ldots, d_n$ be any $n$ positive integers. Let $F_d$ be the $\kk$-linear span of all the monomials $x_1^{\alpha_1}x_2^{\alpha_2}\cdots x_n^{\alpha_n}$ such that $\sum \alpha_id_i \leq d$. Set $d_0 := 1$. Then $F_d$ can be identified with the set of {\em weighted homogenous} polynomials in $x_0, \ldots, x_n$ of of weighted degree $d$ and with weights of $x_i$ being $d_i$. Thus $\af$ is again isomorphic to $\kk[x_0,\ldots,x_n]$, but the grading in this case is induced by the weighted degree $(d_0,d_1,\ldots, d_n)$, and $\xf$ is the weighted projective space $\pp^n(\kk;d_0,d_1,\ldots,d_n)$.
\end{example}

\begin{example} Let $F_i$ be the set of polynomials of degree less than or equal to $di$, where $d$ is a fixed positive integer. Then $\xf$ is the $d$-uple embedding of $\pp^n(\kk)$ in $\pp^m(\kk)$, where $m = \binom{n+d-1}{n-1}$. In particular, for $n=1$, $\xf$ is the rational canonical curve of degree $d$ in $\pp^d(\kk)$.
\end{example}

\begin{example}
$X$ is again $\ank$ as above. Let $F_1$ be the $\kk$-linear span of all monomials of degree less than or equal to two except for $x_n^2$. Let $F_d = (F_1)^d$ for $d \geq 1$. Then $\xf$ is isomorphic to the variety resulting from a blow up of $\pp^n(\kk)$ at the point $O := [0:\cdots:0:1]$. To see this, let $Y$ be the blow up of $\pp^n(\kk)$ at $O$. Then $Y$ is the closure in $\pp^n(\kk) \times \pp^{n-1}(\kk)$ of the image of $\ank$ under the map $(x_1,\ldots,x_n) \mapsto ([1:x_1:\cdots:x_n],[1:x_1:\cdots:x_{n-1}])$. Embedding $\pp^n(\kk) \times \pp^{n-1}(\kk)$ into $\pp^{n(n+1)-1}(\kk)$ via the Segre map, we see that $Y$ is isomorphic to the closure in $\pp^{n(n+1)-1}(\kk)$ of the image of $X$ under the map $(x_1,\ldots,x_n) \mapsto [1:x_1: \cdots:x_{n-1}:x_1: x_1^2: \cdots : x_1x_{n-1}: \cdots : x_n: x_nx_1: \cdots: x_nx_{n-1}]$. Composing with an automorphism of $\pp^{n(n+1)-1}(\kk)$, we may assume the map is $(x_1,\ldots,x_n) \mapsto [1:x_1: \cdots:x_{n-1}:x_1^2: x_1x_2:\cdots : x_1x_{n-1}: (x_2)^2:x_2x_3:\cdots:x_2x_{n-1}:\cdots : x_{n-1}^2: x_{n-1}x_n:0:\cdots:0]$. Projecting onto the first $N := n + (n-1) + \cdots + 2$ coordinates, we see that $Y$ is isomorphic to the closure in $\pp^{N-1}(k)$ of the map of $\cc^n$ given by all monomials of degree less than or equal to two except for $x_n^2$. But this is precisely $\xf\quad $!       
\end{example}

\begin{example}
Let $X$ be a normal affine variety with trivial divisor class group $\cl X$ (e.g. any $X$ whose coordinate ring is a unique factorization domain). Let $\bar{X} \subseteq \pp^L(\kk)$ be any normal projective completion of $X$. If $X_\infty := \bar{X}\setminus X$ is irreducible, then the embedding $X \into \bar{X}$ arises from a filtration. To see this, let $D$ be the Cartier divisor on $\bar{X}$ such that $\sheaf_{\bar{X}}(D)$ is the line bundle $\sheaf_{\bar{X}}(1)$. Since $\cl X = 0$, there is $f \in \kk(X)$ such that $(f) = D|_{X}$. Let $D' = D - (f)$, so that $\supp(D') \cap X = \emptyset$. It follows that $D' = kX_\infty$ for some $k \in \zz$. But $k = \deg D' = \deg D = \deg \bar{X} > 0$ \cite[Excercise II.6.2]{hart}, and hence $1 \in \Gamma(\sheaf_{\bar X}(D'),\bar{X})$. Without loss of generality we may assume that $\bar{X}$ does not lie in any proper hyperplane in $\pp^L(\kk)$. Then $\dim_\kk \Gamma(\sheaf_{\bar X}(D'),\bar{X}) = L+1$. Let $g_1, \ldots, g_L \in \kk(X)$ such that $1, g_1, \ldots, g_L$ is a vector space basis of $\Gamma(\sheaf_{\bar X}(D'),\bar{X})$. Since $(g_i) + D' \geq 0$ for all $i = 1, \ldots, L$, it follows that none of the $g_i$'s has any pole on $X$, and hence each $g_i \in A$. Let $\phi:X \to \affine[L]{\kk}$ be the map which takes $x \in X$ to $(g_1(x), \ldots, g_L(x))$. Then $\bar{X}$ is precisely the closure of $\phi(X)$ in $\pp^L(\kk)$. By theorem \ref{completethm} it follows that the embedding $X \into \bar{X} \subseteq \pp^L(\kk)$ arises from a filtration on $A$.   
\end{example} 

\begin{example}
Not all projective completions of affine varieties are determined by a filtration. Our example below is a variation of an example by Mike Roth and Ravi Vakil considered in \cite{roth-vakil-affine}. Let $X'$ be a nonsingular cubic curve in $\pp^2$. Let $O$ be one of its $9$ inflection points. Consider the group structure on $X'$ with $O$ as the origin. Pick any point $P$ of $X'$ which is not a torsion point in this group. Then $X := X' \setminus \{P\}$ is an affine variety (\cite[Proposition 5]{goodman-affine}). We claim that there is no homogenous polynomial $f$ in $\cc[x_0, x_1, x_2]$ such that $V(f) \cap X' = \{P\}$. Indeed, assume to the contrary that there is a homogenous polynomial $f$ of degree $d > 0$ such that $V(f) \cap X' = \{P\}$. Then $(f) = 3dP$, where $(f)$ is the divisor on $X'$ associated to $f$. Let $l$ be the equation of the tangent line of $X'$ at $O$. Since $O$ is an inflection point, it follows that $V(l) \cap X' = \{O\}$ and thus the divisor of $l$ on $X'$ is $(l) = 3O$. Then $f/l^d$ is a rational function on $X'$, and its corresponding divisor on $X'$ is $(f/l^d) = 3dP - 3dO$. Since $(f/l^d)$ is a principal divisor, it follows that $(f/l^d) \equiv 0$ in the group structure of $X'$. But then $3dP \equiv 0$ and hence $P$ is a torsion point, which is a contradiction. Thus the claim is true and there is no homogenous polynomial $f$ such that $V(f) \cap X' = \{P\}$. But due to proposition \ref{prop-spec-completion}, there is no integer $d>0$ such that the embedding of $X$ into the image of the $d$-uple embedding of $X'$ comes from a filtration!
\end{example}

\subsection{Two Existence Theorems} \label{subsec-filtrintro-existence}
Recall that for finitely many closed subvarieties $V_1, \ldots, V_m$ of $X$, a completion $\psi: X \into Z$ is said to {\em preserve the intersection of $V_1, \ldots, V_m$ at $\infty$} if $\bar{V}_1 \cap \cdots \cap \bar{V}_m \cap X_\infty =  \emptyset$, where $X_\infty := Z\setminus X$ is the set of `points at infinity' and $\bar{V}_j$ is the closure of $V_j$ in $Z$ for every $j$.

\begin{thm} \label{filtrexistence-thm1}
Let $V_1, \ldots, V_m$ be Zariski closed subsets in an affine variety $X$ such that $\cap_{i=1}^m V_i$ is a finite set. Then there is a complete filtration $\scrF$ on $A$ such that $\psif$ preserves the intersection of the $V_i$'s at $\infty$.
\end{thm}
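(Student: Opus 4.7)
My strategy is to construct an explicit complete filtration $\scrF$ on $A$ by building a closed immersion $X \into \aNk$ whose coordinate functions include generators of each ideal $I_i := I(V_i)$, and then taking the induced standard-degree filtration.  Write $J := I_1 + \cdots + I_m$; the hypothesis that $V_1 \cap \cdots \cap V_m = V(J)$ is finite is equivalent to $A/J$ being a finite-dimensional $\kk$-algebra, and this finite-dimensionality is what ultimately forces preservation at infinity.

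Concretely, pick for each $i$ generators $g_{i,1}, \ldots, g_{i,k_i}$ of $I_i$, augment by $h_1, \ldots, h_N \in A$ so that $\mathcal{S} := \{h_j\} \cup \{g_{i,k}\}$ generates $A$ as a $\kk$-algebra, and let $F_d$ be the $\kk$-span of all monomials of total degree $\leq d$ in the elements of $\mathcal{S}$.  Then $F_0 = \kk$ and $\scrF := \{F_d\}$ is non-negative and finitely generated, hence complete; the resulting $\psif : X \into \proj \profing$ is the standard projective closure under the corresponding closed immersion $X \into \aNk$.  For each $i$, the closure $\bar V_i$ inside $\proj\profing$ is contained in $V(\tilde I_i)$, where $\tilde I_i := \bigoplus_d (F_d \cap I_i)$; therefore to prove the theorem it suffices to verify that $\sum_i \tilde I_i + ((1)_1)$ contains $\profing_d$ for every sufficiently large $d$, or equivalently, writing $\pi: \profing \onto \gring$ for the reduction modulo $(1)_1$, that $\sum_i \pi(\tilde I_i) \supseteq (\gring)_d$ for $d \gg 0$.

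I approach this inclusion by combining two ingredients.  First, $\scrF$ induces a filtration $\bar F_d := (F_d + J)/J$ on $A/J$; because $A/J$ is finite-dimensional this filtration stabilizes at some index $d_0$, so $\gring / \pi(\tilde J) \cong \gr(A/J)$ vanishes in degrees $> d_0$ and hence $\pi(\tilde J) \supseteq (\gring)_d$ for every $d > d_0$, where $\tilde J := \bigoplus_d (F_d \cap J)$.  Second, I claim that $\pi(\tilde J) = \sum_i \pi(\tilde I_i)$ in sufficiently high degree, which is where the specific choice of the $g_{i,k}$ as degree-one coordinates matters: any polynomial-of-degree-$\leq d$ representative of an element of $F_d \cap J$ decomposes monomial-by-monomial into terms factoring through some $g_{i,k}$ (contributing to $F_{d-1} \cdot g_{i,k} \subseteq F_d \cap I_i$) plus a residual polynomial purely in the $h_j$, and since this residual is itself in $J$ it can be absorbed modulo $F_{d-1}$ by an iterative application of the finite-dimensionality of $A/J$.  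The principal technical subtlety is precisely this second ingredient --- the analogous identity $\sum_i (F_d \cap I_i) + F_{d-1} = F_d \cap J + F_{d-1}$ is delicate and would fail for arbitrarily chosen algebra generators; it holds in our setting because each $I_i$ is generated by elements of $\mathcal{S}$ of degree one in the induced filtration.
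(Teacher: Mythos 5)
Your reduction is fine as far as it goes: with $\tilde I_i := \bigoplus_d (F_d\cap I_i)$ one indeed has $\bar V_i \subseteq V(\tilde I_i)$, it suffices to show that $\sum_i\tilde I_i + \langle (1)_1\rangle$ contains every graded piece of sufficiently high degree, and your first ingredient (stabilization of the induced filtration on $A/J$, $J=\sum_i I_i$, by finite-dimensionality) is correct. The gap is the second ingredient, and it is not a technicality: the identity $\sum_i (F_d\cap I_i) + F_{d-1} = (F_d\cap J) + F_{d-1}$ for $d\gg 0$ is false for the filtration you construct, even though each $I_i$ is generated by degree-one elements of $\mathcal{S}$. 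Take $X=\kk^2$, $A=\kk[x,y]$, $V_1=V(x)$, $V_2=V(x-1)$ (disjoint, so the hypothesis holds), with generators $g_{1,1}=x$, $g_{2,1}=x-1$ and $h_1=y$. Your $F_d$ is then exactly the space of polynomials of degree $\le d$, so $\psi_\scrF$ is the usual completion $\pp^{2}(\kk)$, and the closures of these two parallel lines meet at the point $[0:0:1]$ at infinity: the conclusion of the theorem fails for this filtration. Concretely, $y^d\in F_d\cap J$ (here $J=A$), but $y^d\notin (F_d\cap(x))+(F_d\cap(x-1))+F_{d-1}$, since comparing top homogeneous forms would force $y^d\in(x)$. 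The flaw is precisely the ``absorption'' step: knowing that the residual $r$ (a polynomial in the $h_j$ alone) lies in $J$ only gives a decomposition $r=\sum_i r_i$ with $r_i\in I_i$, and nothing bounds the filtration level of the $r_i$; finite-dimensionality of $A/J$ controls $F_d$ modulo $J$, not how elements of $F_d\cap J$ split inside $\sum_i I_i$, and iterating modulo $F_{d-1}$ makes no progress because the obstruction sits in the top graded piece.

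What is missing is the device the paper builds the filtration around: specially adapted elements of the $I_j$ must be forced into filtration degree one, and ``being generators of $I_j$'' is not the right condition. Via the Nullstellensatz applied to the finite set $\bigcap_i V_i$, one obtains for each coordinate $x_i$ of an embedding $X\subseteq\kk^n$ a relation $x_i^{d_i}=f_{i,1}+\cdots+f_{i,m}+g_i$ with $f_{i,j}\in I_j$ and $g_i\in\kk[x_i]$ of degree $<d_i$; placing these particular $f_{i,j}$ in $F_1$ makes each degree-one generator $(\bar x_i)_1$ of the irrelevant ideal nilpotent modulo $\sum_j\tilde I_j+\langle (1)_1\rangle$, which is exactly what your ingredient 2 was supposed to deliver. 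In the example above this amounts to adjoining $xy\in I_1$ and $(x-1)y\in I_2$ to $F_1$ (since $y=xy-(x-1)y$), after which $y^d=(xy)\,y^{d-1}-((x-1)y)\,y^{d-1}$ does lie in $(F_d\cap I_1)+(F_d\cap I_2)$. Without some such choice dictated by relations of this kind, your construction --- and with it your proof --- breaks down.
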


\begin{proof}
Let $A = \kk[x_1,\ldots,x_n]/p$, where $p$ is the prime ideal defining $X \subseteq \ank$. Let $V_i = \spec(\kk[x_1,\ldots,x_n]/q_i)$ with $q_i \supseteq p$ for each $i$.

\begin{claim*}
For each $i = 1, \ldots, n$, there is an integer $d_i \geq 1$ such that
\begin{align}
x_i^{d_i} &= f_{i,1} + \ldots + f_{i,m} + g_i \label{filtrexistence1-eqn1}
\end{align}
for some $f_{i,j} \in q_j$ and a polynomial $g_i \in \kk[x_i]$ of degree less that $d_i$.
\end{claim*}

\begin{proof}
If $V_1 \cap \ldots \cap V_m = \emptyset$, then by Nullstellensatz $\langle q_1, \ldots, q_m \rangle$ is the unit ideal in $\kk[x_1, \ldots, x_n]$, and the claim is obvious. So assume $$V_1 \cap \ldots \cap V_m = \{P_1, \ldots, P_k\} \subseteq \ank,$$
for some $k \geq 1$. Let $P_i = (a_{i,1}, \ldots, a_{i,n}) \in \ank$. For each $i = 1, \ldots, n$, let
$$h_i := (x_i - a_{1,i})(x_i - a_{2,i}) \cdots (x_i - a_{k,i}).$$
By Nullstellensatz, for some $d'_i \geq 1$, $h_i^{d'_i} \in \langle q_1, \ldots, q_m \rangle$, ie $h_i^{d'_i} = f_{i,1} + \ldots + f_{i,m}$ for some $f_{i,j} \in q_j$. Substituting $h_i = \prod_j(x_i - a_{j,i})$ in the preceding equation we see that the claim holds with $d_i := kd'_i$.
\end{proof}

Below  for  $S \subseteq \kk[X]$ we denote by $\kk\langle S \rangle$ the $\kk$-linear span of $S$.\\

Fix a set of $f_{i,j}$'s satisfying the conclusion of the previous claim. Then define a filtration $\scrF$ on $A$ as follows: let
\begin{align*}
	F_0 &:= \kk, \\
	F_1 &:= \kk\langle 1, \bar x_1, \ldots, \bar x_n, \bar f_{1,1}, \ldots, \bar f_{n,m} \rangle, \\
	F_k &:= F_1^k\ \textrm{for}\ k>1,\\
	\scrF &:= \{F_i|\ i\geq 0\}.
\end{align*}

Clearly $\scrF$ is a complete filtration. We now show that this $\scrF$ satisfies the conclusion of the theorem. Let $\psi_\scrF: \spec A \into \proj \profing$ be the embedding. Recall that for $\bar q \in \spec A$,
$$\psi_\scrF(\bar q) = \dsum_{i \geq 0}\bar q \cap F_i.$$
Since $V_i = V(\bar q_i) \subseteq \spec A$, it follows that the closure of $V_i$ in $\proj \profing$ is $\bar V_i = V(\psi_\scrF(\bar q_i)) \subseteq \proj \profing$. Since $\proj \profing \setminus \spec A = V((1)_1)$, we have to show that
$$V(\psi_\scrF(\bar q_1), \ldots, \psi_\scrF(\bar q_m), (1)_1) = \emptyset \in \proj \profing.$$
But this is equivalent to showing that $\sqrt I = \profing_+$, where $I$ is the ideal generated by $\psi_\scrF(\bar q_1), \ldots, \psi_\scrF(\bar q_m)$ and $(1)_1$ in $\profing$, and where $\profing_+ := \dsum_{i>0}F_i$ is the so called {\em irrelevant} ideal of $\profing$.\\

\sloppy

From the construction of $\scrF$ it follows that $\profing_+$ is generated by the elements $(1)_1, (\bar x_1)_1, \ldots, (\bar x_n)_1, (\bar f_{1,1})_1, \ldots, (\bar f_{n,m})_1$. Note that $\bar f_{i,j} \in \bar q_j$ for each $i,j$, so that $(\bar f_{i,j})_1 \in \psi_\scrF(\bar q_j) \subseteq I$. Moreover, $(1)_1 \in I$. So, all we really need to show is that $(\bar x_i)_1 \in \sqrt I$ for all $i = 1, \ldots, n$.\\

\fussy

Consider equation \ref{filtrexistence1-eqn1} and its reduction mod $p$, i.e.
$$(\bar x_i)^{d_i} = \bar f_{i,1} + \ldots + \bar f_{i,m} + \bar g_i \in A$$
for all $i = 1, \ldots, n$. Let $g_i = \sum_{j=0}^{d_i-1}a_{i,j}(x_i)^j$. Then in $\profing$,
$$((\bar x_i)_1)^{d_i} = ((1)_1)^{d_i-1}((\bar f_{i,1})_1 + \ldots + (\bar f_{i,m})_1) + \sum_{j=0}^{d_i-1}a_{i,j}((\bar x_i)_1)^j((1)_1)^{d_i-j}.$$
All of the summands in the right hand side lie inside $I$, hence $((\bar x_i)_1)^{d_i} \in I$ for all $i = 1, \ldots, n$, as required.
\end{proof}
Recall that given a polynomial map $f=(f_1, \ldots, f_q):X \to \cc^q$, $a = (a_1, \ldots, a_q) \in f(X)$ and a completion $\psi$ of $X$, $\psi$ {\em preserves a fiber $\finv(a)$ at $\infty$} if $\psi$ preserves the intersection of the hypersurfaces $H_i(a) := \{x \in X: f_i(x) = a_i\}$, $i = 1, \ldots, q$.

\begin{example}\label{filtrexistence-example1}
Consider $f:\affine[2]{\kk} \to \affine[2]{\kk}$ given by $f(x,y) = (x, y + x^3)$. For $a = (a_1, a_2) \in \kk^2$, 
\begin{align*}
H_1(a) &= \{(a_1, y): y \in \kk\},\\
H_2(a) &= \{(x,a_2-x^3):x \in \kk\}.
\end{align*}
In the usual completion $\pp^{2}(\kk)$ of $\kk^2$, the closures of $H_1(a)$ and $H_2(a)$ intersect at the point $(0:0:1)$ at infinity for each $a \in \kk^2$, and hence $\pp^{2}(\kk)$ does not preserve any fiber of $f$ at $\infty$.\\

We now produce in the same way as suggested by theorem \ref{filtrexistence-thm1} a completion of $\affine[2]{\kk}$ which preserves $\finv(0)$ at infinity. In the notation of theorem \ref{filtrexistence-thm1}, $q_1	= \langle x \rangle$ and $q_2	= \langle y+x^3 \rangle$. Observe that $x \in q_1$, and $y$ satisfies
\begin{align*}
y =  - x^3 + (y+x^3),
\end{align*}
with $x^3 \in q_1$ and $y+x^3 \in q_2$. Then as in theorem \ref{filtrexistence-thm1}, we define the filtration $\scrF:= \{F_i|\ i\geq 0\}$ on $\kk[x,y]$ by: $F_0 := \kk$, $F_1 := \kk\langle 1, x, y, x^3 \rangle$, and $F_k := (F_1)^k$ for $k>1$. By theorem \ref{filtrexistence-thm1}, it follows that $\psi_\scrF$ preserves $\finv(0)$ at $\infty$. In fact, the corresponding completion $\xf = V(w^2z - x^3) \subseteq \pp^3(\kk)$, where the coordinates of $\pp^3(\kk)$ are $[w:x:y:z]$, and it is easy to show that $\xf$ preserves {\em every} fiber of $f$ at $\infty$.
\end{example}

\begin{example}\label{filtrexistence-example2}
Let $f(x,y) = (x,y)$, i.e. $f$ is the identity mapping of $\kk^2$. Then for $a = (a_1, a_2) \in \kk^2$,
\begin{align*}
H_1(a) &= \{(a_1,y)|\ y \in \kk\}, \\
H_2(a) &= \{(x,a_2)|\ x \in \kk\}.
\end{align*}
Consider the filtration $\scrF$ on $\kk[x,y]$ defined by: $F_0 := \kk$, $F_1 := \kk\langle 1, x, y, xy, x^2y^2 \rangle$, and $F_k := (F_1)^k$ for $k \geq 2$. By corollary \ref{completecor}, $\xf$ is the closure of the image of $\affine[2]{\kk}$ under the map $\psi:\affine[2]{\kk} \into \pp^4(\kk)$ defined by:
$$\psi(x,y) = (1:x:y:xy:x^2y^2).$$
But then $\psi(H_1(a)) = \{(1:a_1:y:a_1 y:a_1^2y^2)|\ y \in \kk)\}$. If $a_1=0$, then $\psi(H_1(a)) = \{(1:0:y:0:0)|\ y \in \kk)\}$, and hence the only point at infinity in $\overline{\psi(H_1(a))}$ is $(0:0:1:0:0)$. But if $a_1 \neq 0$, the point at infinity in $\overline{\psi(H_1(a))}$ is $(0:0:0:0:1)$. Similarly, $\psi(H_2(a)) = \{(1:x:a_2:y:a_2 x:a_2^2x^2)|\ x \in \kk)\}$ and the only point at infinity in $\overline{\psi(H_2(a))}$ is $(0:1:0:0:0)$ if $a_2=0$, and $(0:0:0:0:1)$ if $a_2 \neq 0$. Therefore $\psi$ preserves $\finv(a)$ at $\infty$ iff $a$ belongs to the union of the coordinate axes.
\end{example}

Let $f=(f_1,\ldots,f_r): X \to Y \subseteq \kk^r$ be a dominating map of affine varieties of dimension $n$. Given any $y \in Y$ such that $\finv(y)$ is finite, theorem \ref{filtrexistence-thm1} guarantees the existence of a projective completion of $X$ that preserves $f^{-1}(y)$ at $\infty$. But as the preceding example shows, it might be the case that the completion fails to preserve at $\infty$ `most of the' fibers of $f$. This suggests that we should look for a completion $\psi$ which preserves $\finv(y)$ at $\infty$ for {\em generic}  $y \in Y$, in which case, recall that we say $\psi$ {\em preserves $f$ at $\infty$}. The following theorem says that there exist filtrations on $\kk[X]$ which give rise to such completions.

\begin{thm}\label{filtrexistence-thm2}
Let $f: X \to Y$ be a dominating morphism of affine varieties of dimension $n$. Then there is a complete filtration $\scrF$ on the coordinate ring of $X$ such that $\psi_\scrF$ preserves $f$ at $\infty$.
\end{thm}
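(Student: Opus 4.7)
The plan is to apply Theorem \ref{filtrexistence-thm1} to a generic fiber of $f$ and then transfer the preservation property from that fiber to all fibers in a dense open subset of $Y$ via a family-theoretic argument that exploits the completeness of the ambient projective space.

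First I would invoke generic finiteness: since $f$ is dominating and $\dim X = \dim Y = n$, there is a dense open $U \subseteq Y$ on which $f^{-1}(y)$ is a nonempty finite set. Pick $y_0 \in U$, write $Y \subseteq \kk^r$, and set $V_i(y_0) := \{x \in X : f_i(x) = (y_0)_i\}$ for $i = 1, \ldots, r$. Since $\bigcap_i V_i(y_0) = f^{-1}(y_0)$ is finite, Theorem \ref{filtrexistence-thm1} furnishes a complete filtration $\scrF$ on $\kk[X]$ whose associated projective completion $\psi_\scrF : X \into Z$ preserves this intersection at $\infty$, i.e.\ $\bar V_1(y_0) \cap \cdots \cap \bar V_r(y_0) \cap X_\infty = \emptyset$ in $Z$, where $X_\infty := Z \setminus X$.

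Next I would work in the family $Z \times Y$. For each $i$, let $\tilde W_i \subseteq Z \times Y$ be the Zariski closure of $\Gamma_i := \{(x,y) \in X \times Y : f_i(x) = y_i\}$; each $\tilde W_i$ is irreducible of dimension $2n-1$, and one has $\bar V_i(y) \times \{y\} \subseteq \tilde W_i \cap (Z \times \{y\})$ for every $y \in Y$. Because $Z$ is complete, the projection $\pi_Y : Z \times Y \to Y$ is proper, so the image $E' := \pi_Y(\tilde W_1 \cap \cdots \cap \tilde W_r \cap (X_\infty \times Y))$ is closed in $Y$; and for every $y \in Y \setminus E'$ the fiber-wise inclusion above yields $\bar V_1(y) \cap \cdots \cap \bar V_r(y) \cap X_\infty = \emptyset$, which is exactly the preservation of $f^{-1}(y)$ at $\infty$. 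The theorem therefore reduces to showing $E' \neq Y$, since irreducibility of $Y$ then makes $Y \setminus E'$ a dense open.

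To show $E' \neq Y$, I would analyze the components of $\bigcap_i \tilde W_i$. This intersection always contains the closure $\bar\Gamma_f$ of the graph of $f$, which is irreducible of dimension $n$; any other irreducible component $T$ satisfies $T \cap (X \times Y) \subseteq \bigcap_i \Gamma_i = \Gamma_f \subseteq \bar\Gamma_f$, and irreducibility of $T$ forces $T \subseteq X_\infty \times Y$. The piece $\bar\Gamma_f \cap (X_\infty \times Y)$ has dimension at most $n-1$ (proper intersection of the $n$-dimensional $\bar\Gamma_f$ with the divisor $X_\infty \times Y$), hence its image in $Y$ is a proper closed subset. The main obstacle is ruling out an excess component $T \subseteq X_\infty \times Y$ of dimension $n$ whose projection covers $Y$. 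I would handle this by strengthening the choice of $y_0$ so that, in addition to $y_0 \in U$, it lies in the dense open locus (arising from generic-fiber behavior of the proper morphisms $\tilde W_i \to Y$) on which $\tilde W_i(y_0) = \bar V_i(y_0)$ for every $i$; the conclusion of Theorem \ref{filtrexistence-thm1} then forces any such $T$ to avoid $Z \times \{y_0\}$, giving $y_0 \notin E'$. The circularity (this dense open depends on $Z$, which depends on $y_0$) is resolved by a two-step selection: construct $Z$ from an arbitrary first choice of $y_0 \in U$, identify the generic-behavior open $U' \subseteq Y$ determined by $Z$, then reapply Theorem \ref{filtrexistence-thm1} to $V_i(y_1)$ for any $y_1 \in U \cap U'$ to obtain the desired filtration.
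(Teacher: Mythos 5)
Your reduction via the family $Z\times Y$ is fine as far as it goes: with $Z$ fixed, properness of $Z\times Y\to Y$ does make $E'$ closed, and $y\notin E'$ does imply that $\psi_\scrF$ preserves $\finv(y)$ at $\infty$. The genuine gap is in the only step that matters, namely showing $E'\neq Y$ for a completion you can actually produce. Two things go wrong. First, the ``generic-behavior'' equality $\tilde W_i(y)=\bar V_i(y)$ is not established and is not true in the form you need: what a dimension count gives is only that, for $y$ in a dense open set, the components of $\tilde W_i(y)$ not meeting $\Gamma_i$ have dimension $\leq n-2$; such excess components are not ruled out, they sit exactly inside $X_\infty\times\{y\}$, and they are precisely what can make $\bigcap_i\tilde W_i(y)\cap(X_\infty\times\{y\})$ nonempty even when $\bigcap_i\bar V_i(y)\cap X_\infty=\emptyset$. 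Second, and more decisively, your two-step selection does not break the circularity: the open set $U'$ is attached to the first completion $Z$, but the filtration you finally output is a \emph{new} one, produced by reapplying theorem \ref{filtrexistence-thm1} to the fiber over $y_1$, with a new completion $Z'$; the condition you need is that $y_1$ be generic for the families formed inside $Z'\times Y$, and nothing relates that locus to $U'$. (If instead you keep the original $Z$, then $Z$ is only known to preserve $\finv(y_0)$, not $\finv(y_1)$, and again the argument does not close.) That this is not a technicality is shown by the paper's own example \ref{filtrexistence-example2}: there a completion preserves $\finv(a)$ at $\infty$ exactly for $a$ on the coordinate axes, a nowhere dense set, so for that $Z$ one necessarily has $E'=Y$. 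A completion preserving one finite fiber at $\infty$ can fail for all generic fibers, which is exactly why theorem \ref{filtrexistence-thm2} cannot be deduced from theorem \ref{filtrexistence-thm1} by a genericity/properness argument over a single-fiber completion.

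The paper's proof avoids this by making the construction uniform in the fiber from the start. After composing with a Noether normalization so that $Y=\ank$, each coordinate function $x_i$ of $X$ satisfies an algebraic equation $\sum_{j}g_{i,j}(y)x_i^{k_i-j}=0$ over $\kk[y_1,\ldots,y_n]$ with $g_{i,0}\neq 0$; the filtration is built explicitly from the monomials $x_i^k y^\beta$ occurring in these equations, and the verification that $\psi_\scrF$ preserves $\finv(a)$ at $\infty$ is carried out simultaneously for every $a$ in the explicit open set where $\prod_i g_{i,0}(a)\neq 0$, by rewriting the equations in terms of $y-a$ and showing the relevant irrelevant-ideal membership directly. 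If you want to salvage your approach, you would need a construction whose good locus in $Y$ is identified independently of (or uniformly in) the chosen fiber; that is in effect what the integral-dependence equations provide, and it is the ingredient your proposal is missing.
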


\begin{proof}
By Noether normalization, there is a finite map $f':Y \to \ank$. Composing $f$ with $f'$ if necessary, we can assume $Y = \ank$. Let $\kk[X] = \kk[x_1,\ldots,x_m]/p$, where $p$ is the prime ideal defining $X \subseteq \affine[m]{\kk}$. Let the coordinates of $\ank$ be $y_1,\ldots,y_n$. Since $f$ is dominating, it follows that $f^*:\kk[y_1,\ldots,y_n] \to \kk[X]$ is an injection. Since dimension of $X$ is $n$, it then implies that $\kk[X]$ is algebraic over $\kk[y_1,\ldots,y_n]$.\\ 

In order to simplify notations in the proof below we identify $x_i$'s with their classes $\mod p$ (and do not put the `bar' over the $x_i$'s). We also use $y_j$'s and $f_j$'s interchangeably, since they define the same element in $\kk[X]$.\\

Since $\kk[X]$ is algebraic over $\kk[y_1,\ldots,y_n]$, it follows that for all $i = 1, \ldots, m$, 
\begin{align}
\sum_{j=0}^{k_i}g_{i,j}(y)(x_i)^{k_i-j} &= 0 \label{app1-eqn2}
\end{align}
for some $k_i \geq 1$ and $g_{i,j}(y) \in \kk[y_1,\ldots,y_n]$ such that $g_{i,0} \neq 0$. Let $g_{i,j}(y) = \sum_{\alpha}c_{i,j,\alpha}y^{\alpha}$. Consider the partial ordering $\prec$ on $\nn^n$ given by $\alpha \prec \beta$ iff $\beta - \alpha \in \nn^n$. Define a filtration $\scrF := \{F_i|\ i\geq 0\}$ on $\kk[X]$ as follows: let
\begin{eqnarray*}
	F_0 &:=& \kk, \\
	F_1 &:=& \kk\langle 1, x_1, \ldots, x_m, y_1, \ldots, y_n \rangle \\
			&	 & +\ \kk\langle y^{\beta}|\ \beta \prec \alpha\ \textrm{for some $\alpha$ such that}\ c_{i,k_i,\alpha} \neq 0 \rangle \\
	    &	 & +\ \kk\langle x_iy^{\beta}|\ \beta \prec \alpha\ \textrm{for some $\alpha$ such that}\ c_{i,k_i-1,\alpha} \neq 0 \rangle, \\
	F_k &:=& \left\{
		\begin{array}{l}
		\sum_{j=1}^{k-1} F_jF_{k-j} + \kk\langle (x_i)^ky^{\beta}|\ \beta \prec \alpha\ \textrm{for some $\alpha$ such that}\ c_{i,k_i-k,\alpha} \neq 0 \rangle \\
	    	\textrm{for}\ 1<k \leq \max\{k_1,\ldots,k_m\},\ \textrm{and} \\
		\sum_{j=1}^{k-1} F_jF_{k-j} \quad \textrm{if}\ k>k_i\ \forall i.
		\end{array}
		\right.
\end{eqnarray*}
\renewcommand{\filtrationring}{\kk[X]}

\begin{claim*}
$\tilde X := \proj \profing$ satisfies the required property.
\end{claim*}

\begin{proof}
Writing $y=(y-b)+b$ in equation \ref{app1-eqn2}, expanding the polynomials, and then separating the terms that do not depend on any $y_j - b_j$, we derive
\begin{align}
\sum_{j=0}^{k_i}g_{i,j}(b)(x_i)^{k_i-j} + \sum_{j=0}^{k_i}\tilde h_{i,j}(b)\tilde g_{i,j}(y-b)(x_i)^{k_i-j} &= 0, \label{app1-eqn3}
\end{align}
where $g_{i,0} \neq 0$ and each monomial in each $\tilde g_{i,j}$ has degree greater than or equal to one. Let 
$$g := \prod_{i=1}^m g_{i,0}(y_1, \ldots, y_n) \in A.$$  
Let $V :=  \{a \in Y: g(a) \neq 0\} \cap f(X)$. Since $f$ is dominating, $V$ is a non-empty Zariski open set of $f(X)$. Let $a:=(a_1, \ldots, a_n) \in V$. It suffices to prove that $\psi_\scrF$ preserves $\finv(a)$ at infinity. Let $H_j := \{x \in X: f_j(x) = a_j\}$ and let $q_j$ be the ideal of $H_j$, i.e. the ideal of $\kk[X]$ generated by $y_j - a_j$. Recall that the corresponding ideal $q_j^\scrF$ of $\profing$ is given by: $q_j^\scrF := \dsum_{i \geq 0} (q_j \cap F_i)$. Let $\tilde q$ be the ideal of $\profing$ generated by $q^\scrF_1, \ldots, q^\scrF_n$ and $(1)_1$. Then we have to show that
$$\sqrt{\tilde q} = \profing_+,$$ 
where $\profing_+ = \dsum_{i>0}F_i$ is the maximal homogenous ideal of $\profing$.\\

\sloppy

By construction of $\scrF$, it follows that $\profing_+$ is generated by elements $(1)_1, (x_1)_1, \ldots, (x_m)_1, (y_1)_1, \ldots, (y_n)_1$, the $(y^{\beta})_1$'s given in the definition of $F_1$, and all those $((x_i)^ky^\beta)_k$'s that we inserted in the definition of all $F_k$'s. We will show that some power of each of these generators lie in $\tilde q$.\\

\fussy

At first observe that for each $j$, $(y_j - a_j)_1 \in q_j^\scrF$. But 
$$(y_j)_1 = (y_j - a_j)_1 + a_j(1)_1,$$
so that each $(y_j)_1 \in \tilde q$. Similarly each $(y^\beta)_1$ is a $\kk$-linear combination of $((y - a)^\gamma)_1$'s for $\gamma \prec \beta$. Since each of the latter terms lie in $\tilde q$, each such $(y^\beta)_1$ is also in $\tilde q$. Finally, let $k \geq 1$, and consider $(x_i)^ky^\alpha \in F_k$. For {\em each} $\beta \in \nn^n$ such that $0 \neq \beta \prec \alpha$, $(x_i)^k(y-a)^\beta$ is also in $F_k$, since $(x_i)^k(y-a)^\beta$ is a $\kk$-linear combination of $(x_i)^ky^\gamma$ for $\gamma \prec \beta$, and all these $(x_i)^ky^\gamma$'s already lie in $F_k$. Pick any $j$ such that $\beta_j > 0$. Then $(x_i)^k(y-a)^\beta$ lies in $q_j$, and therefore $((x_i)^k(y-a)^\beta)_k \in q_j^\scrF \subseteq \tilde q$. But $(x_i)^ky^\alpha$ is a $\kk$-linear combination of such $(x_i)^k(y-a)^\beta$'s and $(x_i)^k$. It follows that $((x_i)^ky^\alpha)_k$ is a $\kk$-linear combination of such $((x_i)^k(y-a)^\beta)_k$'s and $((x_i)^k)_k = ((x_i)_1)^k$. Therefore, to show that $((x_i)^ky^\alpha)_k \in \sqrt{\tilde q}$, we only have to show that each $(x_i)_1 \in \sqrt{\tilde q}$.\\

Consider equation \ref{app1-eqn3}. At first note that by the same argument as in the preceding paragraph, for each $b \in \ank,\ \tilde g_{i,j}(y-b)(x_i)^{k_i-j} \in F_{k_i-j} \subseteq F_{k_i}$ for each $i,j$. Regarding each summand as an element in $F_{k_i}$, equation \ref{app1-eqn3} evaluated at $b = a$ gives, in $\profing$,
{\small
\begin{align*}
g_{i,0}(a)((x_i)_1)^{k_i} = - \sum_{j=1}^{k_i}g_{i,j}(a)((x_i)_1)^{k_i-j}((1)_1)^j \ - 
        	 \sum_{j=0}^{k_i}\tilde h_{i,j}(a)(\tilde g_{i,j}(y-a)(x_i)^{k_i-j})_{k_i}.
\end{align*}}

For each $j \geq 1$, $((x_i)_1)^{k_i-j}((1)_1)^j \in \langle (1)_1 \rangle \subseteq \tilde q$. Pick any nonzero term $a_\alpha(y-a)^\alpha$ of any $\tilde g_{i,j}(y-a)$ and pick any $j$ such that $\alpha_j > 0$. Then $a_\alpha(y-a)^\alpha(x_i)^{k_i-j} \in q_j$, and hence $(a_\alpha(y-a)^\alpha(x_i)^{k_i-j})_{k_i} \in q_j^\scrF \subseteq \tilde q$. Thus each term of the right hand side of the above equation lies in $\tilde q$. Since $a \in V$, $g_{i,0}(a) \neq 0$. It follows that $((x_i)_1)^{k_i} \in \tilde q$ for each $i$, as required. 
\end{proof}
\renewcommand{\qedsymbol}{}
\end{proof}

\begin{rem}
Let $X \subseteq \cc^m$. Trivial filtrations like the one induced by the projective closure of the graph in $\cc^{m+n}$ of map $f$ do not necessarily preserve $P$ at $\infty$, as the following example shows. Let $X = Y = \affine[2]{\cc}$ and $P:X \to Y$ be a quasifinite map defined by $f_1 := x_1^3 + x_1^2x_2 + x_1x_2^2 - x_2$, and $f_2 := x_1^3 + 2x_1^2x_2 + x_1x_2^2 - x_2$. Let the coordinates of $\pp^2$ be $[Z:X_1:X_2]$ where $x_i = X_i/Z$ for $i = 1,2$. Choose local coordinates $\xi_1 := X_1/X_2, \xi_2 := Z/X_2$ near $P := [0:0:1]$. Let $(a_1, a_2) \in \cc^2$. Equations of $f_{i,a} := f_i(x) - a_i$ in $(\xi_1,\xi_2)$ coordinates are:
\begin{align*}
f_{1,a} &= \xi_1^3 + \xi_1^2 + \xi_1 - \xi_2^2 - a_1\xi_2^3 \\
f_{2,a} &= \xi_1^3 + 2\xi_1^2 + \xi_1 - \xi_2^2 - a_1\xi_2^3
\end{align*}
It follows that for each $i$, $f_{i,a} = 0$ has a branch near $P$ with parametrization:
$$\gamma_{i,a}(t) := [t:t^2 +\ \text{h.o.t}:1],$$
where {\em h.o.t.} denotes higher order terms in $t$. In $(x_1, x_2)$ coordinates the parametrization of $\gamma_{i,a}$ becomes: $\gamma_{i,a}(t) := (t +\ \text{h.o.t}, 1/t)$.\\

Let $\Gamma$ be the graph of $P$ in $\cc^4$. Choose coordinates $(x_1, x_2, y_1, y_2)$ of $\cc^4$ such that map $\phi: X \to \Gamma$ is given by $\phi(x) = (x_1,x_2, f_1(x), f_2(x))$. Let $X'$ be the closure of $\Gamma$ in $\pp^4(\cc)$ and let the coordinates of $\pp^4(\cc)$ be $[Z:X_1:X_2:Y_1:Y_2]$ where $x_i = X_i/Z$ and $y_i = Y_i/Z$ for $i = 1,2$. Let $a := (a_1, a_2) \in \cc^2$. Then $f_2(\gamma_{1,a}(t)) = (t +\ \text{h.o.t})^3 + \frac{2}{t}(t +\ \text{h.o.t})^2 + \frac{1}{t^2}(t +\ \text{h.o.t}) - \frac{1}{t} = c_1 + \text{h.o.t}$, for some constant $c_1 \in \cc$. But then
\begin{align*}
\phi(\gamma_{1,a}(t)) &= [1:t +\ \text{h.o.t}\ : 1/t: a_1: c_1 + \text{h.o.t}] \\
											&= [t:t^2+\ \text{h.o.t}\ : 1: a_1t: c_1t + \text{h.o.t}]  
\end{align*}
It follows that $\lim_{t \to 0} \phi(\gamma_{1,a}(t)) = [0:0:1:0:0]$. It can be shown exactly in the same way that $\lim_{t \to 0} \phi(\gamma_{2,a}(t))$ is also $[0:0:1:0:0]$. Therefore, point $[0:0:1:0:0]$ is in the intersection of the closure in $X'$ of the curves $H_i(a) := \{x \in X: f_i(x) = a_i\}$ for $i = 1,2$. We conclude that completion $X'$ of $X$ does not preserve {\em any} fiber of $f$ at $\infty$.
\end{rem}

\begin{rem}
In the case that $n=2$, $X \subseteq \affine[m]{\kk}$ and $f:X \to \affine[2]{\kk}$ is a quasifinite map, we give an explicit construction of a projective completion of $X$ which preserves $f$ at $\infty$. It is the closure $X'$ of the graph $\Gamma$ of $f$ in $\pp^m(\kk)\times\pp^1(\kk)\times \pp^1(\kk)$.\\

To see this, identify $X$ with $\Gamma$ via $\phi:x \to (x,f(x))$. Let $a \in \kk$ be such that $H_{1,a} := \{x \in X: f_1(x) = a\} \neq \emptyset$. Then $\dim H_{1,a} = 1$. Let $C_1, \ldots, C_k$ be the irreducible components of $H_{1,a}$, $\bar C_i$ be the closure of $C_i$ in $X'$, and $\eta_i: \tilde C_i \to \bar C_i$ be the normalization map for each $i$. Since $f_2: C_i \to \kk$ is a non-constant map, it follows that $f_2 \circ \eta_i$ extends to a {\em finite} map from $\tilde C_i \to \pp^1(\kk)$. Let $B_i(a) := (f_2\circ\eta_i)(\tilde C_i\setminus \eta_i^{-1}(C_i))$ and let $\pi_3: X' \subseteq \pp^m(\kk)\times\pp^1(\kk)\times \pp^1(\kk)\to \pp^1(\kk)$ be the projection onto the $3$-rd coordinate. Pick an $x' \in \bar H_{1,a} \cap X_\infty$, where $\bar H_{1,a}$ is the closure of $H_{1,a}$ in $X'$. Then $x' \in \bar C_i\setminus C_i$ for some $i$. Pick a sequence of points $\{x_n\} \in C_i$ converging to $x'$. Also, pick $\tilde x \in \eta_i^{-1}(x')$ and $\tilde x_n \in \eta_i^{-1}(x_n)$ such that $\{\tilde x_n\}$ converge to $\tilde x$. Then $\pi_3(x') = \lim_{n\to \infty}\pi_3(\phi(x_n)) = \lim_{n\to \infty}f_2(x_n) = \lim_{n\to \infty}(f_2\circ\eta_i)(\tilde x_n) = (f_2\circ\eta_i)(\tilde x) \in B_i(a)$. Let $b \in \affine[1]{\kk}$. Then for every $x$ in $H_{2,b}:= \{x \in X: f_2(x) = b\}$, $\pi_3(x) = b$. This implies that $\pi_3(\bar H_{2,b}) = b$ and hence, if $b \not\in \cup_i B_i(a)$, then $\bar H_{1,a} \cap \bar H_{2,b} \cap X_\infty = \emptyset$. Since $B_i(a)$ is finite for each $a$, it follows that $X'$ preserves generic fibers of $f$ at $\infty$, as required. 
\end{rem}

\begin{example}\label{filtrexistence-thm2-example1}
Consider $f:\affine[2]{\kk} \to \affine[2]{\kk}$ given by $f(x,y) = (xy, x^2y-x+y)$. Then $f$ is quasi-finite, and $x$ and $y$ satisfy the following equations over $\kk[f_1,f_2]$:
\begin{eqnarray*}
x^2(f_1-1) - xf_2 + f_1 & = & 0, \\
y^2 - yf_2 + f_1^2 - f_1 & = & 0.
\end{eqnarray*}
In the notation of the proof of theorem \ref{filtrexistence-thm2} then
$$g = f_1 - 1$$
and the filtration $\scrF$ on $\kk[x,y]$ is given by $F_0 := \kk$, $F_1 := \kk\langle 1$, $x$, $y$, $f_1$, $f_2$, $f_1^2$, $xf_2$, $yf_2 \rangle$, $F_2 := F_1^2 + \kk\langle x^2f_1 \rangle$, and $F_k := \sum_{j=1}^{k-1} F_jF_{k-j}$ for $k>2$. By theorem \ref{filtrexistence-thm2}, for all $a = (a_1,a_2) \in \affine[2]{\kk}$ such that $a_1 \neq 1$, $\xf$ preserves $\finv(a)$ at $\infty$.
\end{example}

\section{\Gf and \Sgf} \label{sec-semi-quasidegree}
\subsection{Definitions and Examples} \label{subsec-semiquasintro}
\renewcommand{\filtrationring}{\ensuremath{A}}
\renewcommand{\filtrationchar}{\ensuremath{\mathcal{F}}}
\newcommand{\adelta}{\profingg{A}{\delta}}
\newcommand{\xdelta}{\profingg{X}{\delta}}

Below we relax the definition of degree like functions to include those having value $-\infty$. We will return to integer valued degree like functions after theorem \ref{quasi-integer-degree}.

\begin{defn*}
A {\em degree like function} on $A$ is a map $\delta: A\setminus \{0\} \to \zz \cup \{-\infty\}$ such that:
\begin{enumerate}
\item \label{degreelike-prop-0} $\delta(\kk) \leq 0$,
\item \label{degreelike-prop-1} $\delta(f+g) \leq \max\{\delta(f), \delta(g)\}$ for all $f, g \in A$, with $<$ in the preceding equation implying $\delta(f) = \delta(g)$.
\item \label{degreelike-prop-2} $\delta(fg) \leq \delta(f) + \delta(g)$ for all $f, g \in A$.
\end{enumerate}
\end{defn*}

There is a one-to-one correspondence between filtrations and degree like functions on $A$:\\
\begin{tabular}{ccc}
{\em Filtrations} & $\longleftrightarrow$ & {\em Degree like functions} \\
$\scrF	= \{F_d\}$			& $\longrightarrow$			& $\delta_\scrF:f \in A \to \inf\{d:f \in F_d\}$ \\
$\scrF_\delta := \{F_d := \{f \in A: \delta(f) \leq d\}\}$ & $\longleftarrow$ & $\delta$
\end{tabular}\\

Therefore, from now on we relate the degree like functions and filtrations by means of the natural
one-to-one correspondence described above. In particular, we call a degree like function $\delta$ {\em complete} (resp. {\em finitely generated}) iff the corresponding filtration $\scrF_\delta$ is complete (resp. finitely generated). Moreover, $\adelta$ and $\gr \adelta$ will denote the rings $\profingg{A}{\scrF_\delta}$ and $\gr \profingg{A}{\scrF_\delta}$ respectively, and $\psi_\delta$ will denote the natural mapping $\spec A \into \xdelta := \proj \adelta$.\\

We now introduce two classes of degree like functions which satisfy stronger versions of the multiplicative property (i.e. property \ref{degreelike-prop-2} above).

\begin{defn*}
\mbox{}
\begin{itemize}
\item A degree like function $\delta$ on $A$ is a {\em \gengf} iff $\delta(fg) = \delta(f) + \delta(g)$ for all $f,g \in A$.
\item We say that $\delta$ is a {\em \gensgff} if there are \gengff s $\delta_1, \ldots, \delta_N$ such that 
\begin{align} \label{gensgf-condition}
	\delta(f) = \max_{1 \leq i \leq N} \delta_{i}(f) \quad \text{for all}\ f \in A.
\end{align}
\end{itemize}
Given a \gensgf $\delta$ as in \ref{gensgf-condition}, getting rid of some $\delta_i$'s if necessary, we may assume that each $\delta_i$ that appears in \ref{gensgf-condition} is {\em not redundant}, i.e. for every $i$, there is an $f \in A$ such that $\delta_{i}(f) > \delta_{j}(f)$ for all $j \neq i$ (because, if there is $i$ such that for all $f \in A$, $\delta_i(f) \leq \delta_j(f)$ for some $j \neq i$, then $\delta(f) = \max_{j \neq i} \delta_{j}(f)$ for all $f \in A$). In the latter case we will say that \ref{gensgf-condition} is a {\em minimal presentation} of $\delta$.  
\end{defn*}

\newcommand{\akdeltaprime}{\profingg{A}{k\delta'}}

\begin{example}[Weighted Degree] \label{weighted-semi-example}
Weighted degrees on the polynomial ring $\kk[x_1, \ldots, x_n]$ are semidegrees. When the weight $d_i$ of $x_i$ is positive for each $i$, the weighted degree is complete and the associated completion is the corresponding weighted projective space.
\end{example}

\begin{example}[{\em Iterated} Semidegree]\label{iterated-semi-example}
Let $\delta$ be a degree like function on $A$. Pick $f \in A$ and an integer $w$ with $w < \delta(f)$. Consider $R := A[s]$, where $s$ is an indeterminate. Extend $\delta$ to a degree like function $\delta_e$ on $R$ by defining $\delta_e(\sum a_is^i) := \max_{a_i \neq 0}(\delta(a_i)+iw)$. 

\begin{claim*}
If $\delta$ is a semidegree then $\delta_e$ is also a semidegree.
\end{claim*}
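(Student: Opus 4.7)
The plan is to verify the defining properties of a degree-like function for $\delta_e$ together with the multiplicative equality that distinguishes a semidegree. The bound $\delta_e(\kk)\le 0$ is immediate since $\delta_e$ restricts to $\delta$ on $\kk$. For sub-additivity, with $F=\sum a_is^i$ and $G=\sum b_is^i$ in $R$, a coefficient-wise application of property (1) of $\delta$ gives $\delta(a_i+b_i)+iw\le \max\{\delta(a_i)+iw,\,\delta(b_i)+iw\}$, so taking the max over $i$ yields $\delta_e(F+G)\le\max\{\delta_e(F),\delta_e(G)\}$. For the strict-inequality clause, when $\delta_e(F)>\delta_e(G)$ I would pick any $i^*$ achieving $\delta(a_{i^*})+i^*w=\delta_e(F)$; then $\delta(a_{i^*})>\delta(b_{i^*})$, whence property (1) of $\delta$ forces $\delta(a_{i^*}+b_{i^*})=\delta(a_{i^*})$. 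This gives $\delta_e(F+G)\ge \delta_e(F)$, so the inequality is in fact an equality.

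The substantive part is multiplicativity with equality. Write $FG=\sum_k c_k s^k$ with $c_k=\sum_{i+j=k}a_ib_j$. The upper bound $\delta_e(FG)\le\delta_e(F)+\delta_e(G)$ follows immediately by applying property (1) of $\delta$ to $c_k$ and then the semidegree identity $\delta(a_ib_j)=\delta(a_i)+\delta(b_j)$. For the reverse inequality, my plan is to isolate a single degree $k^*$ at which the coefficient attains the right $\delta$-value without cancellation. Concretely, let $i^*$ be the \emph{largest} index with $\delta(a_i)+iw=\delta_e(F)$, let $j^*$ be the largest index with $\delta(b_j)+jw=\delta_e(G)$, and set $k^*:=i^*+j^*$. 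For any other pair $(i,j)$ with $i+j=k^*$ and $a_ib_j\ne 0$, the maximality of $i^*$ and $j^*$ forces either $i>i^*$ (so $\delta(a_i)+iw<\delta_e(F)$) or $i<i^*$, hence $j>j^*$ (so $\delta(b_j)+jw<\delta_e(G)$); either way $\delta(a_ib_j)<\delta(a_{i^*}b_{j^*})$.

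Consequently the summand $a_{i^*}b_{j^*}$ of $c_{k^*}$ has strictly larger $\delta$-value than every other nonzero summand, so iterating the strict-inequality clause of property (1) of $\delta$ yields $\delta(c_{k^*})=\delta(a_{i^*})+\delta(b_{j^*})$, whence $\delta_e(FG)\ge \delta(c_{k^*})+k^*w=\delta_e(F)+\delta_e(G)$. The main obstacle is precisely arranging this uniqueness of leading term in $c_{k^*}$, which is what necessitates the extremal (largest-index) selection of $i^*,j^*$; it is also the step at which the semidegree hypothesis on $\delta$, as opposed to the weaker degree-like inequality, enters crucially, since only the exact multiplicative identity allows one to compare $\delta(a_ib_j)$ to $\delta(a_{i^*}b_{j^*})$ by adding the $F$- and $G$-contributions separately.
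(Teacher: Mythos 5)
Your proof is correct and takes essentially the same route as the paper's: both arguments hinge on choosing the \emph{largest} indices $i^*,j^*$ attaining $\max_i(\delta(a_i)+iw)$ and $\max_j(\delta(b_j)+jw)$, and then showing that the coefficient of $s^{i^*+j^*}$ in $FG$ realizes $\delta_e(F)+\delta_e(G)$. The only organizational difference is that the paper first replaces $F$ and $G$ by their $\delta_e$-leading parts, so that the top coefficient in $s$ is the single product $a_{i^*}b_{j^*}$ with no competing terms, whereas you keep the full polynomials and dispose of the other contributions to $c_{k^*}$ via the strict-inequality clause of property (1) of $\delta$; you also write out the routine degree-like axioms for $\delta_e$ that the paper leaves implicit.
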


\begin{proof}
Let $G = \sum g_is^i, H = \sum h_js^j, d := \delta_e(G), e := \delta_e(H)$. For each $k, l \geq 0$, write $G_k := \sum_{\delta(g_i)+ iw = k} g_is^i$ and $H_k := \sum_{\delta(h_i)+ iw = k} h_is^i$. It suffices to show $\delta_e(G_dH_e) \geq d + e$. Thus we may assume $G = G_d$ and $H = H_e$. Now let $i_0$ (resp. $j_0$) be the largest integer such that $g_{i_0} \neq 0$ (resp. $h_{j_0} \neq 0$). Then $GH = g_{i_0}h_{j_0}s^{i_0+j_0} + \sum_{m<i_0+j_0} a_ms^m$. Thus $\delta_e(GH) \geq \delta_e(g_{i_0}h_{j_0}s^{i_0+j_0}) = \delta(g_{i_0}h_{j_0})+ (i_0+j_0)w = \delta(g_{i_0})+ i_0w + \delta(h_{j_0})+ j_0w = d+e$.
\end{proof}

Let $J$ denote the ideal generated by $ s - f$ in $R$. Let $\tilde\delta$ be the degree like function on $A = R/J$ induced by $\delta_e$, i.e. $\tilde \delta(h) := \min\{\delta_e(H): H - h \in J\}$. Let $\gr \tilde I$ be the ideal generated by the class of $(f)_{\delta(f)}$ in $\gr \af$, and let $\tilde I$ be the inverse image of $\gr \tilde I$ under the natural projection $\af \onto \gr\af$. Since $\delta$ is a semidegree, it follows that $\tilde I = \langle (f)_{\delta(f)}\rangle$ and $\gr \tilde I = (\tilde I + \langle (1)_1 \rangle)/\langle (1)_1 \rangle = \langle (f)_{\delta(f)}, (1)_1 \rangle/\langle (1)_1 \rangle$.

\begin{thm}\label{iterated-thm} 
If $\delta$ is a semidegree, then $\tilde\delta$ is a semidegree if and only if $\gr \tilde I$ is a prime ideal of $\gring$.
\end{thm}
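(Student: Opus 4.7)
The plan is to reformulate both conditions in terms of the associated graded ring $\gr\profingg{A}{\tilde\delta}$ and compute that ring directly. The general principle I would invoke first is that for any degree-like function $\epsilon$ on a domain $A$, $\epsilon$ is a semidegree if and only if $\gr\profingg{A}{\epsilon}$ is an integral domain: the condition $\epsilon(gh) = \epsilon(g)+\epsilon(h)$ is precisely the statement that the leading forms of $g$ and $h$ multiply to a nonzero class of degree $\epsilon(g)+\epsilon(h)$, and a graded ring without nonzero homogeneous zero divisors is automatically a domain. This reduces the theorem to computing $\gr\profingg{A}{\tilde\delta}$ and checking when it is a domain.

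I would then compute $\gr\profingg{A}{\tilde\delta}$ via two observations. First, the explicit description $F_d^{\delta_e}(R) = \{\sum_i a_i s^i : a_i \in F_{d-iw}^\delta(A)\}$ realizes $\profingg{R}{\delta_e}$ as the polynomial ring $\adelta[\sigma]$, where $\sigma := (s)_w$ is the degree-$w$ copy of $s$, algebraically independent over $\adelta$. Second, the quotient $\pi\colon R\onto R/J=A$ induces, by the very definition of $\tilde\delta$, a surjection $\profingg{R}{\delta_e}\onto\profingg{A}{\tilde\delta}$ whose kernel is the graded ideal with $d$th component $J\cap F_d^{\delta_e}(R)$. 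The key step is to show this kernel is the principal ideal generated by $(s-f)_{\delta(f)}$: since $\delta_e$ is also a semidegree (the claim proved just above the theorem) and $w<\delta(f)$, for every nonzero $Q \in R$ we have $\delta_e(sQ) = w+\delta_e(Q) < \delta(f)+\delta_e(Q) = \delta_e(fQ)$, so the leading form of $(s-f)Q$ in $\gr\profingg{R}{\delta_e}$ is simply $-[f]\cdot [Q]$, with no cancellation coming from $sQ$. Writing $(s-f)_{\delta(f)} = \sigma\cdot((1)_1)^{\delta(f)-w} - (f)_{\delta(f)}$ and passing to $\gr$ by killing $(1)_1$ annihilates the first summand (since $\delta(f)-w\geq 1$), yielding
\[
\gr\profingg{A}{\tilde\delta}\;\cong\;(\gr\adelta)[\sigma]\big/\langle (f)_{\delta(f)}\rangle\;=\;(\gr\adelta/\gr\tilde I)[\sigma].
\]

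A polynomial ring $B[\sigma]$ is a domain if and only if $B$ is, so $\gr\profingg{A}{\tilde\delta}$ is a domain if and only if $\gr\adelta/\gr\tilde I$ is, i.e.\ if and only if $\gr\tilde I$ is a prime ideal of $\gr\adelta$. Combined with the reformulation of the first paragraph, this gives the stated equivalence. The main obstacle is the identification of the kernel as the principal ideal generated by $(s-f)_{\delta(f)}$: both the strict inequality $w<\delta(f)$ and the semidegree property of $\delta$ (inherited by $\delta_e$) are essential here, precisely to rule out cancellations among leading forms that would otherwise enlarge the initial ideal beyond $\langle [f]\rangle$.
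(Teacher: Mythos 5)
Your proposal is correct and follows essentially the same route as the paper: pass to the surjection $\profingg{R}{\delta_e}\onto\profinggg{\tilde\delta}$ with kernel the homogenization of $J$, identify $\profingg{R}{\delta_e}\cong\adelta[s]$ and use that $\delta_e$ is a semidegree to see the kernel is principal on $(s-f)_{\delta(f)}$, and conclude $\gr\profinggg{\tilde\delta}\cong(\gr\adelta/\gr\tilde I)[s]$. The only difference is that you spell out two steps the paper leaves implicit (the equivalence ``semidegree $\Leftrightarrow$ associated graded ring is a domain'' and the principality of the homogenized ideal), which is a matter of detail rather than of method.
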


\begin{proof}
Consider maps $\phi: R \onto A$ and $\Phi: \profingg{R}{\delta_e} \onto \profinggg{\tilde \delta}$ such that $\phi(\sum a_is^i) := \sum a_if^i$, and $\Phi((H)_{d}) := (\phi(H))_d$. Then $\Phi$ is a surjective map of graded rings with $\ker \Phi = J^{\delta_e} := \psi_{\delta_e}(J) \subseteq \profingg{R}{\delta_e}$, so that $\profinggg{\tilde \delta} = \profingg{R}{\delta_e}/J^{\delta_e}$. It follows that $\gr \profinggg{\tilde \delta} = \profinggg{\tilde \delta}/\langle (1)_1 \rangle = \profingg{R}{\delta_e}/(J^{\delta_e} + \langle (1)_1 \rangle)$. Now $\profingg{R}{\delta_e} \cong \adelta[s]$ via the map $(\sum f_is^i)_d \to \sum (f_i)_{d-iw}s^i$. Moreover, since $\delta$ is a semidegree on $A$, it follows that $\delta_e$ is a semidegree on $R$. Therefore $J^{\delta_e} = \langle (s - f)_{\delta(f)} \rangle$, and hence $J^{\delta_e} + \langle (1)_1 \rangle = \langle (f)_{\delta(f)}, (1)_1 \rangle$. Thus $\gr \profinggg{\tilde \delta} = \adelta[s]/\langle (f)_{\delta(f)}, (1)_1 \rangle = (\adelta[s]/\langle (1)_1 \rangle)/ (\langle (f)_{\delta(f)}, (1)_1 \rangle/\langle (1)_1 \rangle) = (\gr \adelta/\gr \tilde I)[s]$. It follows that $\gr \profinggg{\tilde \delta}$ is an integral domain iff $\gr \tilde I$ is a prime ideal of $\gr \adelta$.
\end{proof}

In view of theorem \ref{iterated-thm}, we make the following definitions:
\begin{defn*}
Let $\delta$ be a semidegree on $A$.
\begin{itemize}
\item  The {\em leading term} $\ld(f)$ of an element $f$ of $A$ is the class of $(f)_{\delta(f)}$ in $\gr\adelta$. 
\item $f$ is  {\em prime} with respect to $\delta$ if the ideal $\langle \ld(f) \rangle$ is prime in $\gr \adelta$.
\end{itemize}  
\end{defn*}

\begin{example}\label{iterated-semi-concrete-example}
Let $A = \kk[x_1,x_2]$ and $\delta$ be the weighted degree on $A$ that assigns weight $3$ to $x_1$ and $2$ to $x_2$. Apply the procedure of example \ref{iterated-semi-example} to $f := x_1^2 - x_2^3 \in A$ and $w := 1 < \delta(f) = 6$ to define a new degree like function $\tilde \delta$ on $A$. Note that $\gr \adelta \cong \kk[x_1, x_2]$ via the map that sends $\ld(h)$ to the leading weighted homogenous component of $h$. Since $f$ is weighted homogenous, $\ld(f)$ corresponds to $f$ via the above identification. Since $f$ is irreducible in $\kk[x_1,x_2]$, it follows that $f$ is prime with respect to $\delta$, and by theorem \ref{iterated-thm}, $\tilde \delta$ is a semidegree. It is easy to see that $\adelta = \kk[(1)_1, (x_1)_3, (x_2)_2, (f)_1]$, and $\xdelta \cong V(ZW^5 - X^2 +Y^3) \subseteq \WP$, where $\WP$ is the weighted projective space $\pp^3(\kk;1,3,2,1)$ with (weighted homogenous) coordinates $[W:X:Y:Z]$.
\end{example}
\end{example}

\begin{example}[Quasidegrees determined by integral Polytopes]\label{toric-example}
Let $X$ be the $n$-torus $(\kk^*)^n$ and $A := \kk[x_1, x_1^{-1}, \ldots, x_n, x_n^{-1}]$ be its coordinate ring. Let $\scrP$ be a convex rational polytope (i.e. a convex polytope in $\rr^n$ with vertices in $\qq^n$) of dimension $n$ containing origin in the interior. Define $\delta' : A \setminus\{0\}\to \zz_+$ as follows:
\begin{align*}
\delta'(x^\alpha) &:= \inf\{r \in \qq_+: \alpha \in r\scrP\}, \\
\delta'(\sum a_\alpha x^\alpha) &:= \max_{a_\alpha \neq 0} \delta'(x^\alpha)
\end{align*}

\begin{claim*}
There is $k \in \nn$ such that $k\delta'$ is a complete quasidegree.
\end{claim*}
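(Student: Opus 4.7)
The plan is to read off the semidegrees that produce $\delta'$ from the facets of $\scrP$. Since $\scrP$ is a rational polytope of dimension $n$, write
\[
\scrP = \{v \in \rr^n : \langle u_i, v\rangle \leq c_i, \ i = 1, \ldots, m\},
\]
where $u_1, \ldots, u_m \in \zz^n$ are the primitive outer facet normals and $c_1, \ldots, c_m \in \qq_{>0}$; positivity of the $c_i$ holds because the origin lies in the interior of $\scrP$. A lattice point $\alpha$ lies in $r\scrP$ for some $r \geq 0$ iff $\langle u_i, \alpha\rangle \leq r c_i$ for every $i$, so the infimum over such $r$ is
\[
\delta'(x^\alpha) = \max_{1 \leq i \leq m} \frac{\langle u_i, \alpha\rangle}{c_i}.
\]

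\textbf{Semidegrees attached to facets.} For each $i$, define a $\qq$-valued function $\tilde\delta_i$ on $A \setminus \{0\}$ by $\tilde\delta_i(\sum a_\alpha x^\alpha) := \max_{a_\alpha \neq 0} \langle u_i, \alpha\rangle/c_i$. Both sum axioms for a degree like function follow routinely from the fact that $\text{supp}(f+g) \subseteq \text{supp}(f) \cup \text{supp}(g)$ and that monomials with distinct $\tilde\delta_i$-values cannot cancel. For products, let $f_{(i)}$ denote the Laurent sub-polynomial of $f$ obtained by keeping only those terms $a_\alpha x^\alpha$ with $\langle u_i, \alpha\rangle/c_i = \tilde\delta_i(f)$. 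If $\alpha + \beta = \gamma$ with $\alpha, \beta$ in the supports of $f, g$ and $\langle u_i, \gamma\rangle/c_i = \tilde\delta_i(f) + \tilde\delta_i(g)$, then the bounds $\langle u_i, \alpha\rangle/c_i \leq \tilde\delta_i(f)$ and $\langle u_i, \beta\rangle/c_i \leq \tilde\delta_i(g)$ force equality in both, so $\alpha$ lies in the support of $f_{(i)}$ and $\beta$ in that of $g_{(i)}$; consequently the coefficient of $x^\gamma$ in $fg$ equals its coefficient in $f_{(i)} g_{(i)}$. Since $A$ is an integral domain, $f_{(i)} g_{(i)} \neq 0$, so at least one such $\gamma$ survives in $fg$ and $\tilde\delta_i(fg) = \tilde\delta_i(f) + \tilde\delta_i(g)$. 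Choose a positive integer $k$ with $k/c_i \in \zz$ for every $i$; then $k u_i / c_i \in \zz^n$, each $k\tilde\delta_i$ is an integer-valued semidegree on $A$, and
\[
(k\delta')(f) = \max_{1 \leq i \leq m} (k\tilde\delta_i)(f),
\]
so $k\delta'$ is a quasidegree.

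\textbf{Completeness.} To conclude I would verify the three conditions defining completeness for $\scrF_{k\delta'}$. For non-negativity and $F_0 = \kk$: boundedness of $\scrP$ together with $0 \in \text{int}(\scrP)$ implies that for every nonzero $\alpha \in \zz^n$ some $\langle u_i, \alpha\rangle$ is strictly positive, so $(k\delta')(x^\alpha) > 0$; combined with $(k\delta')(1) = 0$ this gives both. For finite generation: $F_d$ has $\kk$-basis $\{x^\alpha : \alpha \in (d/k)\scrP \cap \zz^n\}$, so the Rees algebra is a semigroup algebra
\[
\profingg{A}{k\delta'} \;\cong\; \kk\bigl[C \cap \zz^{n+1}\bigr], \qquad C := \{(\alpha, d) \in \rr^{n+1} : d \geq 0,\ k\langle u_i, \alpha\rangle \leq d\, c_i \text{ for all } i\};
\]
the cone $C$ is rational polyhedral, so by Gordan's lemma $C \cap \zz^{n+1}$ is a finitely generated monoid and $\profingg{A}{k\delta'}$ is a finitely generated $\kk$-algebra.

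\textbf{Main obstacle.} The only step that is more than polytope bookkeeping is the multiplicativity of each $\tilde\delta_i$; this rests squarely on $A$ being an integral domain, so that $\tilde\delta_i$-leading forms cannot multiply to zero. Everything else is standard: the facet inequalities give the max-formula for $\delta'$, and Gordan's lemma delivers finite generation of the semigroup algebra attached to the rational cone over $(1/k)\scrP$.
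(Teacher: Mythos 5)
Your proof is correct and follows essentially the same route as the paper: decompose $\delta'$ as the maximum of the facet-supported weighted degrees $\langle u_i,\cdot\rangle/c_i$, clear denominators with a common $k$ so each becomes an integer-valued semidegree, check non-negativity and $F_0=\kk$ from boundedness of $\scrP$ and $0\in\operatorname{int}\scrP$, and get finite generation from Gordan's lemma applied to the rational cone over the polytope. The only (harmless) divergence is in the last step: you identify the whole Rees algebra directly with the semigroup algebra of the lattice points of the cone over $\{1\}\times\frac{1}{k}\scrP$, while the paper first passes to the $kl$-th truncation (with $l\scrP$ integral), applies Gordan's lemma there, and then uses that the full ring is integral over the truncation — your version is a slight streamlining of the same argument.
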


\begin{proof}  
For each face $\scrQ$ of $\scrP$, let $\eta_\scrQ$ be the smallest `outward pointing' integral vector normal to $\scrQ$ and let $c_\scrQ = \langle \eta_\scrQ, \alpha \rangle$, where $\alpha$ is any element of the hyperplane that contains $\scrQ$. Since $\scrP$ is rational, there is an $\alpha$ in each $\scrQ$ with rational coordinates, and hence each $c_\scrQ$ is a positive rational number. Let $\delta'_\scrQ$ be the $\qq$-valued weighted degree on $A$ given by:
$$\delta'_\scrQ(\sum a_\alpha x^\alpha) = \max_{a_\alpha \neq 0} \frac{\langle \eta_\scrQ, \alpha \rangle}{c_\scrQ}.$$
For each $r \in \rr_+$, $r\scrP = \{\beta \in \rr^n: \langle \eta_\scrQ, \beta \rangle \leq rc_\scrQ$ for every face $\scrQ$ of $\scrP\}$. It follows that for each $\alpha \in \rr^n$,
\begin{align*}
\delta'(x^\alpha) &:= \inf\{r \in \qq_+: \alpha \in r\scrP\}, \\
								 &= \inf\{r \in \qq_+: \langle \eta_\scrQ, \alpha \rangle \leq rc_\scrQ\ \text{for every face}\ \scrQ\ \text{of}\ \scrP\}\\
								 &= \inf\{r \in \qq_+: \delta'_\scrQ(x^\alpha) \leq r\ \text{for every face}\ \scrQ\ \text{of}\ \scrP\}\\
								 &= \max\{\delta'_\scrQ(x^\alpha): \scrQ\ \text{is a face of}\ \scrP\}.
\end{align*}
Let $k \in \nn$ be such that $k/c_\scrQ$ is an integer for each $\scrQ$. Then $k\delta'_\scrQ$ is an integer valued semidegree for each $\scrQ$ and hence $k\delta'$ is a quasidegree.\\

We claim that $\akdeltaprime$ is finitely generated. Indeed, since $\scrP$ is rational, there is an $l \in \nn$ such that $l\scrP$ is {\em integral}. Identify $\rr^n$ with the hyperplane $x_{n+1}=1$ in $\rr^{n+1}$. Let $\scrC$ be the cone in $\rr^{n+1}$ over $l\scrP \subseteq \rr^n$ (vertex of $\scrC$ being the origin). Then, with $d=kl$, the $d$-th truncated ring of $\akdeltaprime$ is
\begin{align*}
(\akdeltaprime)^{[kl]} &:= \dsum_{m \geq 0} \{f \in A: k\delta'(f) \leq klm\} \\
								&= \dsum_{m \geq 0} \{f \in A: \delta'(f) \leq lm\} \\
								&= \dsum_{m \geq 0} \kk\text{-span}\{x^\alpha: \alpha \in lm\scrP\} \\
								&= \dsum_{m \geq 0} \kk\text{-span}\{x^\alpha: (\alpha,m) \in \scrC \cap \zz^{n+1}\}.
\end{align*}
By Gordan's lemma \cite[Proposition 1, Section 1.2]{fultoric} the semigroup $\scrC \cap \zz^{n+1}$ is finitely generated. It follows that $(\akdeltaprime)^{[kl]}$ is a finitely generated $\kk$-algebra. But $\akdeltaprime$ is integral over $(\akdeltaprime)^{[kl]}$, since for every $(f)_d \in  \akdeltaprime$, $((f)_d)^{kl} \in (\akdeltaprime)^{[kl]}$. Therefore $\akdeltaprime$ is a $(\akdeltaprime)^{[kl]}$-submodule of the integral closure of $(\akdeltaprime)^{[kl]}$. Since the latter is a finite $(\akdeltaprime)^{[kl]}$-module, it follows that $\akdeltaprime$ is also a finite $(\akdeltaprime)^{[kl]}$-module and hence is a finitely generated algebra over $\kk$.
\end{proof}

Let $\delta := k\delta'$, where $k$ is an integer as in the above claim. Assume that $d$ is a positive integer such that $(\adelta)^{[d]}$ is generated as a $\kk$-algebra by elements in the $d$-th graded component of $\adelta$. These elements are precisely the $\kk$-span of monomials $x^\alpha$ such that $\alpha \in \frac{d}{k}\scrP$. The image of the $d$-uple embedding of $\xdelta$ is thus the closure (in the appropriate dimensional projective space) of the image of $(\kk^*)^n$ under the map induced by all the monomials with exponents in $\frac{d}{k}\scrP$. Hence $\xdelta$ is isomorphic via $d$-uple embedding to the classical {\em toric completion} $X_\scrP$ of $(\kk^*)^n$ determined by $\scrP$ \cite[secion 3.4]{fultoric}.
\end{example}
\subsection{Properties of \Sgf}\label{subsec-semiquasidegree-properties}
\renewcommand{\filtrationring}{\ensuremath{A}}
\renewcommand{\filtrationchar}{\ensuremath{\delta}}
\newcommand{\abarf}{\ensuremath{A^{\bar\scrF}}}
\newcommand{\afd}{{\af}^{(d)}}
\newcommand{\xfd}{{\xf}^{(d)}}
\newcommand{\del}[1]{(#1)_{\delta(#1)}}

\begin{thm}\label{gengfsgf-characterization}
Let $\delta$ be a degree like function on $A$ and let $I$ be the ideal of $\adelta$ generated by $(1)_1$. Then
\begin{enumerate}
\item\label{gengf-characterization} $\delta$ is a \gengf if and only if $I$ is a prime ideal, and
\item\label{gensgf-characterization} $\delta$ is a \gensgf if and only if $I$ is a {\em decomposable} radical ideal. In particular, if $\profing$ is noetherian, then $\delta$ is a \gensgf if and only if $I$ is a radical ideal.
\end{enumerate}
\end{thm}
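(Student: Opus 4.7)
The plan rests on the identification $\gring \cong \profing/I$ (immediate from the definitions), so $I$ is prime iff $\gring$ is a domain, and $I$ is radical iff $\gring$ is reduced. For Part \ref{gengf-characterization}, given nonzero $f,g \in A$ with $d := \delta(f)$ and $e := \delta(g)$, the classes $\overline{\del{f}}, \overline{\del{g}} \in \gring$ are nonzero homogeneous elements (since $(f)_d \in I_d$ would force $\delta(f) \leq d-1$) whose product $\overline{(fg)_{d+e}}$ is nonzero precisely when $\delta(fg) = d+e$. Since a graded ring is a domain iff products of nonzero homogeneous elements are nonzero, $\gring$ is a domain iff $\delta$ is a semidegree.

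For Part \ref{gensgf-characterization}, forward: given $\delta = \max_i \delta_i$ with each $\delta_i$ a semidegree, the inequality $\delta_i \leq \delta$ induces a graded inclusion $\profing \hookrightarrow \profinggg{\delta_i}$ which, composed with the quotient onto $\gringgg{\delta_i}$ (a domain by Part \ref{gengf-characterization}), has as kernel the graded prime $P_i := \{(f)_d \in \profing : \delta_i(f) < d\}$, and each $P_i$ contains $I$ since $\delta_i(1)=0<1$. Checking degree-by-degree, $(f)_d \in \bigcap_i P_i$ iff $\delta_i(f) < d$ for every $i$, iff $\delta(f) = \max_i \delta_i(f) < d$, iff $(f)_d = \del{f}\cdot (1)_1^{d-\delta(f)} \in I$, so $I = \bigcap_i P_i$ is decomposable radical. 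For the backward direction, write $I = \bigcap_{i=1}^N P_i$ with the $P_i$ taken homogeneous and minimal (possible because $I$ is homogeneous). Since $\profing$ is a domain (Lemma \ref{profing-domain-lemma}) and $(1)_1 \ne 0$, each $P_i$ has height one (in the main, noetherian case of the addendum); radicality of $I$ gives $I\profing_{P_i} = P_i\profing_{P_i}$ (because $\sqrt{\,\cdot\,}$ commutes with localisation and $P_i\profing_{P_i}$ is the unique prime over $I\profing_{P_i}$), so the maximal ideal of the one-dimensional local domain $\profing_{P_i}$ is principal, generated by $(1)_1$, whence $\profing_{P_i}$ is a DVR with valuation $\nu_i$ satisfying $\nu_i((1)_1) = 1$. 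Embedding $A$ into $K := \mathrm{Frac}(\profing)$ via $f \mapsto \del{f}/(1)_1^{\delta(f)}$, define $\delta_i(f) := -\nu_i(f) = \delta(f) - \nu_i(\del{f})$; the valuation axioms for $\nu_i$ translate directly into the semidegree axioms for $\delta_i$, and $\nu_i(\del{f}) \geq 0$ forces $\delta_i \leq \delta$, with equality iff $\del{f} \notin P_i$. Since $\del{f} \notin I = \bigcap_i P_i$ (otherwise $\delta(f) < \delta(f)$), for every $f$ some $P_i$ misses $\del{f}$, giving $\max_i \delta_i = \delta$. The noetherian addendum is automatic: a radical ideal in a noetherian ring is a finite intersection of its minimal primes.

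The main obstacle is the backward direction of Part \ref{gensgf-characterization}, where one must extract explicit, integer-valued semidegrees from the abstract decomposition $I = \bigcap P_i$. The crucial insight is that radicality of $I$ is precisely the condition that makes each localisation $\profing_{P_i}$ a DVR \emph{with uniformiser $(1)_1$}; this simultaneously supplies the integer-valued valuation $\nu_i$ and the normalisation $\nu_i((1)_1) = 1$ needed to guarantee that $\delta_i$ is $\zz$-valued and that $\max_i \delta_i$ recovers $\delta$ exactly, with no further rescaling required.
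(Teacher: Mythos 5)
Parts \ref{gengf-characterization} and the forward half of \ref{gensgf-characterization} of your argument are correct. Your forward computation is in fact more direct than the paper's: you obtain $I=\bigcap_i P_i$ with $P_i=\ker\bigl(A^{\delta}\to \gr A^{\delta_i}\bigr)$, whereas the paper instead exhibits the minimal primes explicitly as colon ideals $(I:(f_i)_{d_i})$ attached to a minimal presentation (Lemma \ref{gensgf-characterize-lemma}); your shortcut proves the implication but not the extra information about the unique minimal primary decomposition that the paper reuses in Corollary \ref{sgf-unique-decomp} and in the count of irreducible components at infinity.

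The gap is in the backward half of \ref{gensgf-characterization}. The statement asserts that $I$ decomposable radical implies $\delta$ is a quasidegree for an arbitrary finitely generated $\kk$-algebra $A$, with no noetherian hypothesis on $A^{\delta}$ (the noetherian hypothesis enters only in the ``in particular'' clause) and no assumption that $A$ is a domain; your proof needs both. You invoke Lemma \ref{profing-domain-lemma} and work in $\mathrm{Frac}(A^{\delta})$, so $A$ must be a domain, and the two steps ``minimal primes over $\langle(1)_1\rangle$ have height one'' and ``a one-dimensional local domain with principal maximal ideal is a DVR'' are Krull's principal ideal theorem plus a noetherian fact. Without noetherianness a local domain whose maximal ideal is principal need not be a DVR (a valuation ring with value group $\zz\times\zz$, lexicographically ordered, has principal maximal ideal), so the valuations $\nu_i$ on which your construction rests need not exist; and without the domain hypothesis there is no fraction field to valuate. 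So, as written, you prove the backward implication only when $A$ is a domain and $A^{\delta}$ is noetherian, which is strictly weaker than the stated equivalence. The paper avoids both restrictions: from the minimal primary decomposition it picks $(f_i)_{d_i}\notin I$ with $(I:(f_i)_{d_i})$ the minimal primes and defines $\delta_i(f):=\lim_{k\to\infty}\bigl(\delta(f_i^{k}f)-\delta(f_i^{k})\bigr)$, then verifies the semidegree axioms and $\delta=\max_i\delta_i$ directly, allowing values in $\zz\cup\{-\infty\}$. Your valuative construction is essentially the specialization of Theorem \ref{quasi-integer-degree} to $B=A^{\delta}$ (with $e_j=1$ forced by radicality of $I$ --- a nice observation, and a clean way to see integer-valuedness in the noetherian domain case), but it cannot replace the general argument required by the statement.
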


\begin{proof}
$\delta$ is a \gengf iff for every $d, e \geq 1$, for every $f,g \in A$ with $\delta(f) = d$ and $\delta(g) = e$, $\delta(fg) = d+e$, i.e. iff for every $(f)_d, (g)_e \not\in I$, we have $(fg)_{d+e} \not\in I$, hence iff $I$ is a prime ideal. Therefore \ref{gengf-characterization} is proved.\\

Now assume $\delta$ is a \gensgff. We will show that $I$ is a decomposable radical ideal. Let $\delta = \max_{i=1}^N\delta_i$ be a minimal presentation for $\delta$. Then for each $i$ with $0 \leq i \leq N$, there is an $f_i$ such that $\delta_i(f_i) > \delta_j(f_i)$ for all $j \neq i$. In particular $\delta_i(f_i) \in \zz$. Let $d_i := \delta(f_i) = \delta_i(f_i)$. Recall the notation that $(I:\tilde f)$, where $\tilde f$ is an element of $\profing$, denotes the ideal of $\profing$ defined by: $(I: \tilde f) := \{\tilde g \in \profing:\ \tilde f \tilde g \in I\}$.

\begin{prolemma}\label{gensgf-characterize-lemma}
\mbox{}
\begin{enumerate}
\item \label{part1-gensgf-characterize-lemma} For each $i$, $1 \leq i \leq N$, $(I:(f_i)_{d_i})$ is a homogenous ideal, and the homogenous elements of $(I:(f_i)_{d_i})$ are precisely the elements of the set $L_i := \{(f)_{d}: f \in A, d > \delta_i(f)\}.$
\item \label{part2-gensgf-characterize-lemma} For each $i$, $(I:(f_i)_{d_i})$ is a distinct minimal prime ideal of $\profing$ containing $I$. Moreover,
\begin{align}\label{radical-decomposition}
\bigcap_{i=1}^N (I: (f_i)_{d_i}) &= I.
\end{align}
\end{enumerate}
\end{prolemma}

\begin{proof}
Fix an $i$, $1 \leq i \leq N$. Since $(f_i)_{d_i}$ is a homogenous element in $\profing$ and $I$ is a homogenous ideal of $\profing$, it follows that $(I: (f_i)_{d_i})$ is also a homogenous ideal of $\profing$. Let $(f)_d$ be an arbitrary homogenous element of $(I:(f_i)_{d_i})$. If $\delta(f) < d$, then $\delta_i(f) \leq \delta(f) < d$ and $(f)_d \in L_i$. So assume $\delta(f) = d$. Since $(f)_d(f_i)_{d_i} = (ff_i)_{d+d_i} \in I$, it follows that $\delta(ff_i) < d + d_i$. But then $\delta_i(ff_i) = \delta_i(f)+\delta_i(f_i) < d + d_i$, which implies that $\delta_i(f) < d = \delta(f)$, and thus $(f)_d \in L_i$. Therefore all homogenous elements of $(I:(f_i)_{d_i})$ belong to $L_i$. Now let $(f)_d$ be an arbitrary element of $L_i$. If $d > \delta(f)$, then $(f)_d \in I \subseteq (I:(f_i)_{d_i})$. So assume $d = \delta(f)$. Then $\delta_i(f) < d = \delta(f)$, and thus $\delta_i(ff_i) = \delta_i(f) + \delta_i(f_i) < d + d_i$. Also, for each $j \neq i$, $\delta_j(f_i) < d_i$, so that $\delta_j(ff_i) = \delta_j(f) + \delta_j(f_i) < d + d_i$. Therefore $\delta(ff_i) < d + d_i$ and $(f)_d(f_i)_{d_i} = (ff_i)_{d+d_i} \in I$. Thus $(f)_d \in (I:(f_i)_{d_i})$, which proves that $L_i \subseteq (I:(f_i)_{d_i})$, and hence part \ref{part1-gensgf-characterize-lemma} of the lemma is proved.\\

By definition of $f_i$ and $L_i$'s, we see that for each $i$, $(f_i)_{d_i} \in (\bigcap_{j \neq i}L_j) \setminus L_i$, which implies, by part \ref{part1-gensgf-characterize-lemma} of the lemma, that $(f_i)_{d_i} \in (\bigcap_{j \neq i}(I:(f_j)_{d_j})) \setminus (I: (f_i)_{d_i})$. Therefore $(I: (f_i)_{d_i})$ is a distinct ideal for each $i$. \\

Since $(I: (f_i)_{d_i})$ is a homogenous ideal, to prove that $(I: (f_i)_{d_i})$ is prime, all we need to show is that for all homogenous $(g_1)_{e_1},(g_2)_{e_2} \not\in I$ such that $(g_1)_{e_1}(g_2)_{e_2} \in (I: (f_i)_{d_i})$, one of the $(g_j)_{e_j}$'s also should lie in $(I: (f_i)_{d_i})$. So pick such $(g_1)_{e_1}, (g_2)_{e_2}$. Then $(g_1)_{e_1}(g_2)_{e_2} = (g_1g_2)_{e_1+e_2} \in (I: (f_i)_{d_i})$, which implies, by part \ref{part1-gensgf-characterize-lemma} of the lemma, that $\delta_i(g_1g_2) < e_1 + e_2$. Now, by our assumption, for each $j$, $(g_j)_{e_j} \not \in I$, so that $\delta(g_j) = e_j$, and hence $\delta_i(g_j) \leq e_j$. Taken together, the previous two statements imply that there is a $j$ such that $\delta_i(g_j) < e_j$. But then, by the part \ref{part1-gensgf-characterize-lemma} of the lemma again, $(g_j)_{e_j} \in (I: (f_i)_{d_i})$. It follows that $(I: (f_i)_{d_i})$ is prime.\\

If $I \subseteq p \subseteq (I: (f_i)_{d_i})$ for a prime ideal $p$ of $\profing$, and some $i \leq N$, then $(f_i)_{d_i} \not\in p$ (since $(f_i)_{d_i} \not\in (I: (f_i)_{d_i})$ by part \ref{part1-gensgf-characterize-lemma}). But then for each $(g)_e \in \profing$ such that $(g)_e(f_i)_{d_i} \in I \subseteq p$, $(g)_e \in p$, so that $(I: (f_i)_{d_i}) \subseteq p$, and hence $(I: (f_i)_{d_i}) = p$. Thus for each $i$, $(I: (f_i)_{d_i})$ is a minimal prime ideal containing $I$.\\

For the last part of the lemma, pick any homogenous $(g)_e \in \bigcap (I: (f_i)_{d_i})$. Then, by part \ref{part1-gensgf-characterize-lemma}, for each $i$, $\delta_i(g) < e$. Therefore $\delta(g) = \max_i \delta_i(g)< e$, and hence $(g)_e \in I$. Since $\bigcap(I: (f_i)_{d_i})$ is a homogenous ideal containing $I$, and $(g)_e$ was an arbitrary homogenous element in that ideal, it follows that $\bigcap (I: (f_i)_{d_i}) = I$.
\end{proof}

The second statement of the above lemma tells, in the language of \cite[Chapter 4]{am} that $I$ is a {\em decomposable ideal} in $\profing$, \ref{radical-decomposition} is the {\em unique} minimal primary decomposition of $I$, and $(I: (f_i)_{d_i})$ are the {\em minimal} prime ideals belonging to $I$. Also, since $I$ is a finite intersection of prime ideals, it follows that $I$ is radical. Hence we have proved one half of the second statement of the proposition.\\

Now assume $I$ is a decomposable radical ideal of $\profing$. We will have to show that $\delta$ is a \gensgff. By the theory of minimal decomposition of ideals \cite[Chapter 4]{am} there exist $(f_1)_{d_1}, \ldots, (f_N)_{d_N} \in \profing \setminus I$ such that $(I:(f_i)_{d_i})$ is a minimal prime ideal containing $I$ for each $i$, and the minimal primary decomposition of $I$ is given by:
\begin{align*}
I &= \bigcap_{i=1}^N (I:(f_i)_{d_i}),
\end{align*}
Since $(f_i)_{d_i} \not\in I$, it follows that $\delta(f_i) = d_i \in \zz$ for each $i$. For each $i=1, \ldots, N$ define $\delta_i:A \to \zz \cup \{-\infty\}$ as follows:
\begin{align} \label{defn-d_i-from-f_i}
\delta_i(f) &:= \lim_{k \to \infty} \delta((f_i)^kf) - \delta((f_i)^k).
\end{align}
Note that the above definition is bounded from above. Indeed, for each $i$, $(f_i)_{d_i} \not\in I$. But $I$ is a radical ideal, so that for each $k\geq 1$, $((f_i)_{d_i})^k = ((f_i)^k)_{kd_i} \not\in I$; which implies that $\delta((f_i)^k) = kd_i$. Thus for each $k \geq 1$,
\begin{align*}
\delta((f_i)^{k+1}f) - \delta((f_i)^{k+1}) &\leq \delta((f_i)^kf) + \delta(f_i) - \delta((f_i)^{k+1})\\
		     			   &= \delta((f_i)^kf) + d_i - (k+1)d_i \\
		     			   &= \delta((f_i)^kf) - kd_i \\
		     			   &= \delta((f_i)^kf) - \delta((f_i)^k),
\end{align*}
and hence $\delta_i(f)$ is a well defined element in $\zz \cup \{-\infty\}$. Now we prove that $\delta_i$ is multiplicative.\\

At first note that $\delta_i(f)\in \zz$ iff there is a $k_f \in \nn$ such that for all $k \geq k_f$, $\delta((f_i)^kf) = \delta((f_i)^k) + \delta_i(f) = kd_i + \delta_i(f)$, which is true iff for all $k \geq k_f$, $((f_i)^kf)_{kd_i+\delta_i(f)}(f_i)_{d_i} \not\in I$, i.e. iff for all $k \geq k_f$, $((f_i)^kf)_{kd_i+\delta_i(f)} \not\in (I:(f_i)_{d_i})$. Therefore, if both $\delta_i(f)$ and $\delta_i(g)$ are integers, then there is a $k' := \max\{k_f,k_g\} \in \nn$ such that for all $k \geq k'$, $((f_i)^kf)_{kd_i+\delta_i(f)}$ and $((f_i)^kg)_{kd_i+\delta_i(g)}$ do not lie in $(I:(f_i)_{d_i})$. Since $(I:(f_i)_{d_i})$ is a prime ideal, it follows that for all $k \geq k'$, $((f_i)^kf)_{kd_i+\delta_i(f)}((f_i)^kg)_{kd_i+\delta_i(g)} = ((f_i)^{2k}fg)_{2kd_i+\delta_i(f)+\delta_i(g)} \not\in (I:(f_i)_{d_i})$. It follows by the observation in the first sentence of this paragraph that $\delta_i(fg) = \delta_i(f)+\delta_i(g) \in \zz$.\\

Now, $\delta_i(f) = -\infty$ if and only if for each $n \in \zz$ there is a $k' \in \nn$ such that for all $k \geq k'$, $\delta((f_i)^kf)-\delta((f_i)^k) < n$. Pick $f,g \in A$ such that $\delta_i(f) = -\infty$. If $\delta(g)$ is $-\infty$, then for all $k \geq 0$, $\delta((f_i)^kfg) = -\infty$, which implies that $\delta_i(fg) = -\infty = \delta_i(f) + \delta_i(g)$. So assume $\delta(g) \in \zz$. Pick any $n \in \zz$. Let $k' \in \nn$ such that for all $k \geq k'$, $\delta((f_i)^kf)-\delta((f_i)^k) < n - \delta(g)$. Then for all $k \geq k'$, $\delta((f_i)^kfg)-\delta((f_i)^k) \leq \delta((f_i)^kf)+ \delta(g) -\delta((f_i)^k) < n$. It follows that $\delta(fg) = -\infty = \delta(f) + \delta(g)$. Thus $\delta_i$ is multiplicative for each $i$. Similarly it can be shown that each $\delta_i$ satisfies the additive property of degree like functions. Therefore each $\delta_i$ is a \gengff. We now show that $\delta = \max_{i=1}^N(\delta_i)$.\\

First of all note that for all $i$, $f \in A$, and all $k \in \nn$, the following inequality holds: $\delta((f_i)^kf) - \delta((f_i)^k) \leq \delta(f)$, which implies that $\delta_i(f) \leq \delta(f)$. Thus $\delta \geq\max_{i=1}^N (\delta_i)$. Now pick any $f \in A$. If $\delta(f) = -\infty$, then for all $i$ and for all $k \geq 0$, $\delta((f_i)^kf) = -\infty$, and hence $\delta_i(f) = -\infty = \delta(f)$. So assume $\delta(f) \in \zz$. Since $(f)_{\delta(f)} \not\in I$, it follows that there is an $i$ such that $(f)_{\delta(f)} \not\in (I:(f_i)_{d_i})$. Now, note that $(f_i)_{d_i} \not\in (I:(f_i)_{d_i})$, for that would imply $((f_i)_{d_i})^2 \in I$, which would in turn imply $(f_i)_{d_i} \in I$ (since $I$ is a radical ideal) contradicting our choice of $(f_i)_{d_i}$. Thus neither of $(f)_{\delta(f)}$ and $(f_i)_{d_i}$ is an element of $(I:(f_i)_{d_i})$. Since $(I:(f_i)_{d_i})$ is a prime ideal, it follows that for all $k \geq 0$, $((f_i)_{d_i})^k(f)_{\delta(f)} = ((f_i)^kf)_{kd_i+\delta(f)} \not\in (I:(f_i)_{d_i})$. But then $\delta_i(f) = \delta(f)$. Thus $\delta =\max_{i=1}^N (\delta_i)$, and hence $\delta$ is indeed a \gensgff.
\end{proof}

\begin{cor} \label{sgf-unique-decomp}
Let $\delta$ be a \gensgf on $A$, then there exist unique \gengff s $\delta_1, \ldots, \delta_N$ such that $\delta = \max_{1\leq i\leq N} \delta_i$ is a minimal presentation of $\delta$.
\end{cor}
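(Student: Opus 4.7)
Existence of a minimal presentation is immediate from Theorem \ref{gengfsgf-characterization}; the substance is uniqueness. Suppose $\delta = \max_{i=1}^N \delta_i = \max_{j=1}^{N'} \delta_j'$ are two minimal presentations. The plan is to show that each semidegree in either list is canonically determined by the pair $(\delta, I)$, where $I = \langle (1)_1 \rangle \subseteq \adelta$; the two families must then coincide up to reordering.

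First I would invoke the proof of Theorem \ref{gengfsgf-characterization}: for either presentation, the map $\delta_i \mapsto p_i := (I : (f_i)_{d_i})$, where $(f_i)_{d_i}$ is a homogeneous witness of minimality in that presentation, is a bijection onto the set of minimal primes of $I$. Since the minimal primes of a decomposable radical ideal are intrinsically determined by the ideal, both presentations must have the same cardinality $N = N'$, and after relabeling they share the same associated primes $p_1, \ldots, p_N$. It then remains to show that each $\delta_i$ is recoverable from $\delta$ and $p_i$ alone.

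To this end, by incomparability of distinct minimal primes, one may choose a homogeneous element $(g)_{\delta(g)} \in \bigcap_{l \neq i} p_l \setminus p_i$. Part \ref{part1-gensgf-characterize-lemma} of the lemma embedded in the proof of Theorem \ref{gengfsgf-characterization}, applied to \emph{either} presentation, forces $\delta_i(g) = \delta_i'(g) = \delta(g)$ and $\delta_l(g), \delta_l'(g) < \delta(g)$ for every $l \neq i$. Using multiplicativity of each component semidegree and the strict inequality $\delta_i(g) > \delta_l(g)$ for $l \neq i$, one checks that for every $h \in A$,
\[
\delta(g^k h) - \delta(g^k) = \delta_i(h)
\]
for all sufficiently large $k$; the parallel computation in the second presentation gives the same left-hand side equal to $\delta_i'(h)$. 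Since the left-hand side depends only on $\delta$, $g$, and $h$, this yields $\delta_i(h) = \delta_i'(h)$ for every $h$, completing the uniqueness.

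The main technical point will be that a \emph{single} $g$ simultaneously witnesses the required strict inequalities with respect to both families $\{\delta_l\}$ and $\{\delta_l'\}$; this is precisely what the intrinsic description of $p_i$ provided by Part \ref{part1-gensgf-characterize-lemma} affords. After that, verifying the limiting identity by tracking which index achieves the maximum in $\max_l(k\delta_l(g) + \delta_l(h))$ for large $k$ is routine, handling the sub-cases $\delta_l(h) \in \zz$ and $\delta_l(h) = -\infty$ separately.
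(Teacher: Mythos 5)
Your proof is correct and is essentially the paper's own argument: both identify the component semidegrees with the minimal primes of $I$ via Lemma \ref{gensgf-characterize-lemma}, conclude $N=N'$ and match the primes, and then recover $\delta_i$ from $\delta$ together with a homogeneous element of $\bigl(\bigcap_{l\neq i}p_l\bigr)\setminus p_i$ by the limit formula $\delta_i(h)=\lim_{k\to\infty}\bigl(\delta(g^kh)-\delta(g^k)\bigr)$ (the paper simply takes $g:=f'_i$, the witness from the other presentation, where you allow any such element). The only cosmetic caveat is that your displayed identity holds for fixed large $k$ only when $\delta_i(h)\in\zz$; if $\delta_i(h)=-\infty$ while some $\delta_l(h)\in\zz$ one must pass to the limit, which is exactly the formulation the paper uses.
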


\begin{proof}
Let $\delta_1, \ldots, \delta_{N}, \delta'_1, \ldots, \delta'_{N'}$ be \gengff s such that $\delta = \max_{1\leq i\leq N} \delta_i = \max_{1\leq i\leq N'} \delta'_i$ are two minimal presentations of $\delta$. As in the proof of theorem \ref{gengfsgf-characterization}, there exist $f_1, \ldots, f_n,  f'_1, \ldots, f'_{N'} \in A$  such that
\begin{align}
	\arraycolsep 0em
	\begin{array}{rl}\label{f_i-f'_i-property}
	\delta_i(f_i) &> \delta_j(f_i)\ \textrm{for}\ 1\leq j \neq i \leq N,\ \textrm{and}\\
	\delta'_i(f'_i) &> \delta'_j(f'_i)\ \textrm{for}\ 1\leq j \neq i \leq N'.
	\end{array}
\end{align}

Then $d_i := \delta_i(f_i)$ and $d'_i := \delta'_i(f'_i)$ are integers, and by lemma \ref{gensgf-characterize-lemma},
$$I = \bigcap_{i=1}^{N} (I: (f_i)_{d_i}) = \bigcap_{i=1}^{N'} (I: (f'_i)_{d'_i})$$
are {\em unique} minimal primary decompositions of $I$, where $I$ is the ideal in $\profing$ generated by $(1)_1$. But then $N=N'$, and after a re-indexing of $\delta'_i$'s if necessary, the ideals $(I:(f'_i)_{d'_i})$ and $(I:(f'_i)_{d'_i})$ coincide for each $i \leq N$. \\

Fix an $i$, $1 \leq i \leq N$. By the proof of lemma \ref{gensgf-characterize-lemma}, $(f'_i)_{d'_i} \in (\bigcap_{j \neq i}(I:(f'_j)_{d'_j})) \setminus (I: (f'_i)_{d'_i}) = (\bigcap_{j \neq i}(I:(f_j)_{d_j})) \setminus (I: (f_i)_{d_i})$. Thus, by part \ref{part1-gensgf-characterize-lemma} of lemma \ref{gensgf-characterize-lemma}, $(f'_i)_{d'_i} \in (\bigcap_{j \neq i}L_j) \setminus L_i$, where $L_j$'s are defined as in lemma \ref{gensgf-characterize-lemma}. But then, by definition of $L_j$'s, $\delta_i(f'_i) = \delta(f'_i) = \delta'_i(f'_i)$, and $\delta_j(f'_i) < \delta_i(f'_i)$ for all $j \neq i$. Since this is true for each $i$, it follows that the property \ref{f_i-f'_i-property} holds even if we substitute each $f_i$ by $f'_i$. Since $f_i$'s were assumed to be arbitrary elements in $A$ such that \ref{f_i-f'_i-property} is true, we can without any trouble assume that
$f_i = f'_i$ for each $i$. Finally, note that by the proof of theorem \ref{gengfsgf-characterization}, the following holds for all $f \in A$:
\begin{align*}
\delta_i(f) &= \lim_{k \to \infty} \delta((f_i)^kf) - \delta((f_i)^k),\ \textrm{and} \\
\delta'_i(f) &= \lim_{k \to \infty} \delta((f'_i)^kf) - \delta((f'_i)^k)
\end{align*}
Since $f'_i = f_i$, it follows that $\delta'_i = \delta_i$ for each $i$.
\end{proof}

\begin{cor}
Let $\delta$ be a {\em \gensgff} on $A$ with minimal presentation $\delta = \max_{i=1}^N \delta_i$. Then $X_\infty:=\proj \profing \setminus \spec A$ has $N$ irreducible components.
\end{cor}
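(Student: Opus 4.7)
The proof should be essentially an assembly of results already established in the excerpt. The plan is as follows.

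First I would identify the ambient object. By Proposition \ref{gradedprop}, $X_\infty = \proj A^\delta \setminus \spec A = V((1)_1)$, which under the natural projection $\pi: A^\delta \onto \gr A^\delta$ is isomorphic to $\proj(\gr A^\delta) \cong \proj(A^\delta / I)$, where $I = \langle (1)_1 \rangle$. So the irreducible components of $X_\infty$ are in bijection with the minimal homogeneous prime ideals of $A^\delta$ containing $I$ (other than the irrelevant ideal).

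Next I would bring to bear the characterization of quasidegrees. Since $\delta$ is a quasidegree, Theorem \ref{gengfsgf-characterization}\ref{gensgf-characterization} tells us $I$ is a decomposable radical ideal of $A^\delta$. Lemma \ref{gensgf-characterize-lemma} then yields, for the $f_i \in A$ witnessing that $\delta = \max_i \delta_i$ is a minimal presentation, that
\[
I = \bigcap_{i=1}^N (I:(f_i)_{d_i})
\]
is the unique minimal primary decomposition, and the ideals $\mathfrak{p}_i := (I:(f_i)_{d_i})$ are $N$ \emph{distinct} homogeneous prime ideals which are precisely the minimal primes of $A^\delta$ containing $I$. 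Corollary \ref{sgf-unique-decomp} guarantees that $N$ is intrinsic to $\delta$ and does not depend on the choice of minimal presentation.

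The last step is to verify that each $\mathfrak{p}_i$ actually contributes an irreducible component of $X_\infty$, i.e.\ that no $\mathfrak{p}_i$ coincides with the irrelevant ideal $A^\delta_+$. Since the $\mathfrak{p}_i$'s are pairwise distinct minimal primes over $I$, if $N \geq 2$ then none of them can equal the maximal homogeneous ideal $A^\delta_+$, for two distinct minimal primes cannot both be maximal among homogeneous primes. For $N = 1$, $\delta$ coincides with a single semidegree $\delta_1$, so by Theorem \ref{gengfsgf-characterization}\ref{gengf-characterization} $I$ is itself prime, and $\mathfrak{p}_1 = I$ which is properly contained in $A^\delta_+$ since $(f_1)_{d_1} \in A^\delta_+ \setminus I$ (as $\delta(f_1) = d_1$ implies $(f_1)_{d_1} \notin I$). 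In every case, each $\mathfrak{p}_i$ defines a nonempty closed irreducible subscheme $V(\mathfrak{p}_i) \subseteq X_\infty$, and these are precisely the irreducible components of $X_\infty$, giving exactly $N$ of them.

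The step I expect to require the most care is the last one — confirming that the minimal primes $\mathfrak{p}_i$ correspond to genuine irreducible components of the projective scheme $X_\infty$ rather than being absorbed into the irrelevant ideal — but the homogeneity of the $\mathfrak{p}_i$'s established in Lemma \ref{gensgf-characterize-lemma}\ref{part1-gensgf-characterize-lemma}, together with the fact that they are \emph{minimal} primes over $I$, makes this essentially automatic.
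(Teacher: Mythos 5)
Your proposal is correct and takes essentially the same route as the paper: identify $X_\infty$ with $\proj (\gr A^{\delta}) \cong \proj (A^{\delta}/I)$ via Proposition \ref{gradedprop}, then invoke Lemma \ref{gensgf-characterize-lemma} to see that $I$ (equivalently, the zero ideal of $\gr A^{\delta}$) has exactly $N$ distinct homogeneous minimal primes $(I:(f_i)_{d_i})$, which give the $N$ irreducible components. One small remark: your final non-emptiness check is more cleanly done uniformly in $i$ by the observation you make only for $N=1$, namely that Lemma \ref{gensgf-characterize-lemma} gives $(f_i)_{d_i}\in A^{\delta}_+\setminus (I:(f_i)_{d_i})$ for every $i$, so no minimal prime can contain the irrelevant ideal; the ``two distinct minimal primes cannot both be maximal'' phrasing for $N\geq 2$ is the one loosely argued step, since a priori it only rules out all of them being irrelevant rather than any single one.
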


\begin{proof}
By theorem \ref{completethm}, $X_\infty \cong \proj \gring$, where $\gring := \dsum_{i\geq 0}(F_i/F_{i-1}) \cong \profing/I$, where $I$ is the ideal in $\profing$ generated by $(1)_1$. Let $f_1, \ldots, f_N$, $d_1, \ldots, d_N$ be as in the proof of theorem \ref{gengfsgf-characterization} and let $\mu_i$ be the class of $(f_i)_{d_i}$ in $\gring$. It follows by lemma \ref{gensgf-characterize-lemma} that $0 = \bigcap_{i=1}^N (0:\mu_i)$ is the minimal primary decomposition of the zero ideal in $\gring$ and $(0:\mu_i)$'s are the unique minimal prime ideals belonging to the zero ideal. Since $(0:\mu_i)$ is also homogenous for each $i$, it follows that $X_\infty = \proj \gring = \bigcup_{i=1}^N V((0:\mu_i))$ is the decomposition of $X_\infty$ into irreducible components.
\end{proof}

If $A$ is an integrally closed domain, we deduce additional properties of $\adelta$.

\begin{prop}\label{quasi-normal-prop}
If $A$ is an integrally closed domain and $\delta$ is a \gensgf on $A$, then $\profing$ is also integrally closed.
\end{prop}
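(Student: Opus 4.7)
The plan is to reduce to a single-semidegree case via an intersection argument and then exploit the $t$-grading together with a Newton-polygon style contradiction. By Theorem \oldref{gengfsgf-characterization} write $\delta = \max_{i=1}^N \delta_i$ with each $\delta_i$ a semidegree, and identify $\adelta$ with $\sum_d F_d t^d \subseteq A[t,t^{-1}]$ via \ref{t-isomorphism}. Since $F_d = \{f:\delta(f)\leq d\} = \bigcap_i\{f:\delta_i(f)\leq d\}$, one obtains the equality of subrings $\adelta = \bigcap_{i=1}^N \profingg{A}{\delta_i}$ inside $A[t,t^{-1}]$. Let $K := \mathrm{Frac}(A)$; since $A$ is integrally closed, so is $A[t,t^{-1}]$, and because $t=(1)_1 \in \adelta$ while every $f \in A$ equals $(f)_d/(1)_d$ for $d$ large, each of $\adelta$, the $\profingg{A}{\delta_i}$, and $A[t,t^{-1}]$ has fraction field $K(t)$.

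Now let $\phi \in K(t)$ be integral over $\adelta$. Because $\adelta \subseteq A[t,t^{-1}]$ and the latter is integrally closed in $K(t)$, we have $\phi \in A[t,t^{-1}]$. Since $\adelta \subseteq \profingg{A}{\delta_i}$ for every $i$, $\phi$ is integral over each $\profingg{A}{\delta_i}$ as well, so it suffices to prove the proposition when $\delta$ is a single semidegree: intersecting the conclusions $\phi \in \profingg{A}{\delta_i}$ over $i$ then recovers $\phi \in \adelta$.

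Assume therefore that $\delta$ itself is a semidegree. Then $\adelta = \bigoplus_d F_d t^d$ is a graded subring of $A[t,t^{-1}]$ under the $t$-grading. By the standard fact that the integral closure of a graded subring of a graded ring is again graded (Bourbaki, \emph{Commutative Algebra}, Ch.\,V, \S1, no.\,8), each homogeneous component of $\phi$ is integral over $\adelta$, so it suffices to treat a single monomial $\phi = at^d$; the goal becomes $\delta(a) \leq d$. From an integral equation $(at^d)^n + \sum_{k=1}^n b_k (at^d)^{n-k} = 0$ with $b_k = \sum_j b_{k,j}t^j \in \adelta$ (so $\delta(b_{k,j}) \leq j$), extract the coefficient of $t^{nd}$ to obtain
\begin{equation*}
a^n + \sum_{k=1}^n c_k a^{n-k} = 0,\qquad c_k := b_{k,kd},\ \delta(c_k)\leq kd.
\end{equation*}
If $\delta(a) =: e > d$, semidegree multiplicativity gives $\delta(a^n) = ne$, while for $k \geq 1$ one has $\delta(c_k a^{n-k}) \leq kd + (n-k)e < ke + (n-k)e = ne$. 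The additive axiom of a degree-like function then forces $\delta(a^n) \leq \max_k \delta(c_k a^{n-k}) < ne$, a contradiction. Hence $\delta(a) \leq d$, as required.

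The main obstacle is the graded integral-closure lemma invoked in the third paragraph; if a self-contained argument is preferred, it can be proved via the $\mathbb{G}_m$-action $t \mapsto \lambda t$ on $A[t,t^{-1}]$ (which preserves $\adelta$ as a subring) combined with a Vandermonde extraction over finitely many scalars $\lambda \in \kk^*$ applied to the integral equation for $\phi$.
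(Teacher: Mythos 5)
Your proof is correct, and its engine is the same as the paper's: identify $A^{\delta}$ with $\bigoplus_d F_d t^d \subseteq A[t,t^{-1}]$, use that $A[t,t^{-1}]$ is integrally closed, reduce to a single homogeneous element $at^d$, and extract the coefficient of $t^{nd}$ from an integral equation to contradict multiplicativity on powers. The one structural difference is your preliminary reduction to a single semidegree via $A^{\delta}=\bigcap_{i} A^{\delta_i}$: this intersection argument is valid but unnecessary, because the only multiplicativity your final step uses is $\delta(a^n)=n\delta(a)$, which already holds for the quasidegree itself (since $\max_i\delta_i(a^n)=n\max_i\delta_i(a)$) --- and that is exactly how the paper runs the contradiction directly for $\delta$. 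Where you invoke the graded-integral-closure fact from Bourbaki (or the $\mathbb{G}_m$/Vandermonde argument you sketch) to pass to homogeneous components, the paper argues inline by ``taking the highest degree terms in $t$'' and subtracting; your justification is, if anything, the more airtight way to handle that step, since the naive top-term extraction produces cross terms that the paper glosses over.
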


\begin{proof}
Consider the isomorphism $\profing \cong \dsum_{i \in \zz} F_it^i \subseteq A[t,t^{-1}]$, where $(1)_1$ gets mapped to $t$. Since $A$ is an integrally closed domain, it follows that $A[t,t^{-1}]$ is also integrally closed \cite[Exercise 5.9]{am}. So it suffices to show that $\profing$ is integrally closed in $A[t,t^{-1}]$. Pick $f = \sum_{i=q}^r f_it^i \in A[t,t^{-1}]$ integral over $\profing$, where $f_i \in A$ for each $i$. Then $f$ satisfies an eqation of the form
\begin{align*}
X^s + G_1X^{s-1}+ \cdots + G_s &= 0,
\end{align*}
for $G_1, \ldots, G_s \in \profing$. Taking the highest degree terms in $t$, we see that $f_rt^r$ is integral over $f$. Substituting $f$ by $f-f_rt^r$ and repeating the procedure, we see that each $f_it^i$ is integral over $\profing$. Thus it suffices to show that if $ft^k$ is integral over $\profing$ for some $f \in A$, then $ft^k \in \profing$, or equivalently, $\delta(f) \leq k$. So take $f \in A$ such that $ft^k$ satisfies an equation of the above form. Taking coefficients of $t^{ks}$ if necessary, we can assume that $G_i = g_it^{ik}$ for some $g_i \in A$. Since $g_it^{ik} \in \profing$, it follows that $\delta(g_i)\leq ik$. Assume $d := \delta(f) > k$. Plugging in $t=1$ into the above equation, we see that
\begin{align*}
f^s = -\sum_{i=1}^s g_if^{s-i}
\end{align*}
in $A$. For each $i \geq 1$, we have, $\delta(g_if^{s-i}) \leq \delta(g_i) + \delta(f^{s-i}) \leq ik + (s-i)d < id + (s-i)d = sd$. It follows that $\delta(f^s) = \delta(-\sum_{i=1}^s g_if^{s-i}) \leq \max_{i=1}^s \delta(g_if^{s-i}) < sd$. But since $\delta$ is a \gensgff, it follows that $\delta(f^s) = s\delta(f) = sd$, a contradiction! Thus $d \leq k$.
\end{proof}

\begin{cor}
Let $X$ be a normal affine variety with $A$ being the ring of regular functions on $X$ and $\delta$ being a \complete \sgf on $A$. Then $\spec \profing$ and $\proj \profing$ are also normal varieties.
\qed
\end{cor}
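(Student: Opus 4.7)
The plan is to deduce both normality statements from Proposition \ref{quasi-normal-prop} together with the hypotheses built into ``complete quasidegree''. Since $X$ is normal, $A$ is an integrally closed domain; by Lemma \ref{profing-domain-lemma}, $\profing$ is then a domain; by Proposition \ref{quasi-normal-prop} it is integrally closed; and completeness of $\delta$ gives finite generation over $\kk$, non-negativity, and $F_0 = \kk$, so that $\spec \profing$ is an affine variety and $\proj \profing$ is a projective variety.

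Normality of $\spec \profing$ is then immediate, since an affine variety is normal precisely when its coordinate ring is an integrally closed domain. For $\proj \profing$, I would cover it by the basic affine opens $\spec \profing_{(G)}$, with $G$ ranging over nonzero homogeneous elements of $\profing$, and check that each $\profing_{(G)}$ is an integrally closed domain. The localization $\profing_G$ is a $\zz$-graded domain and is integrally closed as a localization of the integrally closed domain $\profing$; its degree-zero subring is, by definition, $\profing_{(G)}$. If an element of the fraction field of $\profing_{(G)}$ satisfies an integral relation over $\profing_{(G)}$, then the same relation exhibits it as integral over $\profing_G$, so it lies in $\profing_G$; being of degree zero in the natural $\zz$-grading of the fraction field of $\profing_G$, it then lies in $\profing_{(G)}$. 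Hence each $\spec \profing_{(G)}$ is normal and $\proj \profing$ is normal.

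There is essentially no substantive obstacle left, since the real work was done in Proposition \ref{quasi-normal-prop}. The only point requiring minor care is the descent of integral closedness from the $\zz$-graded ring $\profing_G$ to its degree-zero component $\profing_{(G)}$, which is a standard lemma about degree-zero subrings of integrally closed graded domains and is handled by the degree-comparison argument above.
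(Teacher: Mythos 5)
Your proposal is correct and follows exactly the route the paper intends: the corollary is stated as an immediate consequence of Proposition \ref{quasi-normal-prop} (together with Lemma \ref{profing-domain-lemma} and the definition of a \complete\ degree like function), and your deduction for $\spec \profing$ plus the standard descent of integral closedness from $\profing_G$ to its degree-zero part $\profing_{(G)}$ (for homogeneous $G$ of positive degree, which suffice to cover $\proj \profing$) is precisely the routine verification the paper leaves to the reader.
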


\begin{example}
The converse of proposition \ref{quasi-normal-prop} is false. Consider the degree like function $\delta$ on $\kk[x]$ defined by: 
\begin{align*}
\delta(x^k) := \left\{
									\begin{array}{cl}
									3k/2 & \text{if}\ k\ \text{is even} \\
									3(k-1)/2+2 & \text{if}\ k\ \text{is odd} 
									\end{array}
								\right.
\end{align*}
Then $\profingg{\kk[x]}{\delta} \cong \kk[x,y,z]/\langle x^2 - yz \rangle$ (where $\kk[x,y,z]$ is graded by weights 2, 3, 1 corresponding respectively to $x, y, z$). If $\kk$ is not characteristic $2$, then $\profingg{\kk[x]}{\delta} $ is integrally closed \cite[Exercise II.6.4]{hart}. On the other hand, $\delta(x^2) = 3 < 4 = 2\delta(x)$, so that $((x)_2)^2 \in I$ even though $(x)_2 \not\in I$, where $I$ is the ideal of $\profingg{\kk[x]}{\delta}$ generated by $(1)_1$. Thus $I$ is not radical. It follows by theorem \ref{gengfsgf-characterization} that $\delta$ is not a \sgff.
\end{example}

The following theorem gives a characterization of the semidegrees $\delta_i$ associated to a quasidegree $\delta$, provided that $\adelta$ is finitely generated. In particular, it states that if $\delta$ is integer valued, then each $\delta_i$ is also integer valued (which is not clear from the limit definition of $\delta_i$'s from the proof of theorem \ref{gengfsgf-characterization}). We use the notion of a {\em Krull domain} in the theorem, so we recall its definition: 

\begin{defn*}
An integral domain $B$ is a {\em Krull domain} iff 
\begin{enumerate}
\item $B_\ppp$ is a discrete valuation ring for all height one prime ideals $\ppp$ of $B$, and
\item every non-zero principal ideal of $B$ is the intersection of a finite number of primary ideals of height one.
\end{enumerate}
\end{defn*}

Every normal noetherian domain is a Krull domain \cite[Section 41]{matsulgebra}. In particular, the integral closure of $\adelta$ is a Krull domain provided that $\adelta$ is finitely generated.

\begin{thm}\label{quasi-integer-degree}
Let $\delta$ be a quasidegree on $A$ with a minimal presentation $\delta = \max_{1 \leq i \leq N}\delta_i$. Assume that $\adelta$ is a finitely generated $\kk$-algebra. Let $B$ be any Krull domain which is also an integral extension of $\adelta$ and $\ppp_1, \ldots, \ppp_r$ be the height one primes of $B$ containing $(1)_1$. For each $j$, $1 \leq j \leq r$, define a function $\hat\delta_j$ on $A\setminus\{0\}$ by
\begin{align*}
\hat\delta_j(f) := \left\{
											\begin{array}{cl}
											-\infty & \text{if}\ \delta(f) = -\infty \\ 
											\delta(f) - \frac{\nu_j((f)_{\delta(f)})}{e_j} & \text{otherwise},
											\end{array}
										\right.
\end{align*}
where $\nu_j$ is the discrete valuation of the discrete valuation ring $B_{\ppp_j}$ and $e_j := \nu_j((1)_1)$. Then for each $i$, $1 \leq i \leq N$, semidegree $\delta_i \equiv \hat\delta_j$ for some $j$, $1 \leq j \leq r$. In particular, if $\delta$ is integer valued, then $\delta_i$ is also integer valued for each $i$. 
\end{thm}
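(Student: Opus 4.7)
The plan is to identify each semidegree $\delta_i$ with the function $f \mapsto -\nu_j(f)/e_j$ restricted to $A\setminus\{0\}$, for a height-one prime $\ppp_j$ of $B$ chosen to lie over the minimal prime of $\adelta$ corresponding to $\delta_i$. First, I would recall from Theorem \ref{gengfsgf-characterization} and Corollary \ref{sgf-unique-decomp} that the $\delta_i$'s correspond bijectively to the minimal primes $P_i := (I:(f_i)_{d_i})$ of the radical ideal $I = \langle(1)_1\rangle$ in the Noetherian domain $\adelta$, where $f_i \in A$ is chosen with $d_i = \delta_i(f_i) = \delta(f_i)$ and $\delta_k(f_i) < d_i$ for $k \neq i$; by the principal ideal theorem these $P_i$ all have height one.

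Second, we produce a prime $\ppp_j$ from the given list with $\ppp_j \cap \adelta = P_i$. Going-up for the integral extension $\adelta \subseteq B$ supplies a prime $\mathfrak{q}$ of $B$ contracting to $P_i$; since $\mathfrak{q} \ni (1)_1$, it contains some height-one prime $\ppp_j$ minimal over $(1)_1 B$ (which must be one of the $\ppp_j$ from the statement). Then $\ppp_j \cap \adelta$ is a prime of $\adelta$ sandwiched between $\langle (1)_1 \rangle$ and $P_i$, and minimality of $P_i$ forces $\ppp_j \cap \adelta = P_i$. To make $\hat\delta_j$ transparent, identify $\adelta$ with $\sum_d F_d t^d \subseteq A[t,t^{-1}]$ via the isomorphism \ref{t-isomorphism} (sending $(1)_1 \mapsto t$), so that $\nu_j$ extends to a valuation on $\mathrm{Frac}(\adelta) = \mathrm{Frac}(A)(t)$. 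Writing $(f)_{\delta(f)} = f\cdot t^{\delta(f)}$ and using $\nu_j(t) = e_j$, a direct calculation yields $\hat\delta_j(f) = -\nu_j(f)/e_j$ on $A\setminus\{0\}$. In particular $(f_i)_{d_i} \notin P_i$ gives $\nu_j((f_i)_{d_i}) = 0$, so $\nu_j(f_i) = -d_i e_j$ and $\hat\delta_j(f_i) = d_i$.

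Third, we check $\hat\delta_j \equiv \delta_i$ using the limit formula $\delta_i(f) = \lim_k[\delta((f_i)^k f) - kd_i]$ from the proof of Theorem \ref{gengfsgf-characterization}. For $\hat\delta_j(f) \leq \delta_i(f)$, the inclusion $(f_i)^k f \cdot t^{\delta((f_i)^k f)} \in \adelta \subseteq B$ and nonnegativity of $\nu_j$ there give $-\nu_j(f)/e_j \leq \delta((f_i)^k f) - kd_i$ for every $k$, so taking $k\to\infty$ yields $\hat\delta_j(f) \leq \delta_i(f)$. For the reverse inequality (assuming $\delta_i(f) \in \zz$), for $k$ large the semidegree $\delta_i$ dominates the other $\delta_l$'s on $(f_i)^k f$ thanks to the integer gap $\delta_l(f_i) \leq d_i - 1$, so $\delta((f_i)^k f) = kd_i + \delta_i(f)$, and the corresponding leading element $((f_i)^k f)_{kd_i+\delta_i(f)}$ does not lie in $P_i = \ppp_j \cap \adelta$ by part \ref{part1-gensgf-characterize-lemma} of Lemma \ref{gensgf-characterize-lemma}; forcing $\nu_j$ of this element to vanish then gives $\hat\delta_j(f) = \delta_i(f)$.

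The secondary integer-valuedness claim drops out of the upper-bound argument: if $\delta$ takes only integer values on $A\setminus\{0\}$, then $\hat\delta_j(f) = -\nu_j(f)/e_j$ is finite for $f\neq 0$, and the inequality $\hat\delta_j(f) \leq \delta_i(f)$ excludes $\delta_i(f) = -\infty$; combined with the fact that the limit of a non-increasing sequence of integers bounded above is either a finite integer or $-\infty$, this forces $\delta_i(f) \in \zz$. The main obstacle will be the prime-matching step: since $\adelta$ need not be normal, one cannot bypass $B$ and must carefully combine going-up with the Krull structure of $B$ to realize every minimal prime of $\langle (1)_1 \rangle$ in $\adelta$ as the contraction of some height-one $\ppp_j$ of $B$. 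Once that matching is in place, the rest is a routine manipulation of the valuation/degree dictionary arising from the embedding $\adelta \hookrightarrow A[t,t^{-1}]$.
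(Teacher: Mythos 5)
Your proposal is correct, but it takes a genuinely different route from the paper. The paper never matches the primes $\ppp_j$ of $B$ with the minimal primes of $\langle (1)_1\rangle$ in $\adelta$: instead it first shows, citing the Rees--McAdam results on valuations associated to the radical ideal $I = \langle (1)_1\rangle$, that $\min_{j}\nu_j((f)_{\delta(f)})/e_j = 0$ and hence $\delta = \max_{1\leq j\leq r}\hat\delta_j$; it then verifies by direct valuation computations that \emph{every} $\hat\delta_j$ is a semidegree, and concludes by pruning to a minimal presentation and invoking the uniqueness statement of Corollary \ref{sgf-unique-decomp}. You instead fix one $\delta_i$, use Lemma \ref{gensgf-characterize-lemma} to view it through its minimal prime $P_i=(I:(f_i)_{d_i})$, use lying-over for the integral extension $\adelta\subseteq B$ plus the Krull structure of $B$ to find a height-one $\ppp_j$ with $\ppp_j\cap\adelta=P_i$, and then identify $\hat\delta_j$ with $\delta_i$ pointwise via the dictionary $\hat\delta_j(f)=-\nu_j(f)/e_j$ coming from the embedding $\adelta\cong\sum_d F_dt^d\subseteq A[t,t^{-1}]$: nonnegativity of $\nu_j$ on $\adelta$ together with $\nu_j(f_i)=-d_ie_j$ gives $\hat\delta_j\leq\delta_i$, and the large-$k$ domination $\delta((f_i)^kf)=kd_i+\delta_i(f)$ together with $((f_i)^kf)_{kd_i+\delta_i(f)}\notin P_i$ (part \ref{part1-gensgf-characterize-lemma} of the lemma) forces equality. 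Your route is more elementary on the valuation side (only lying-over and the DVR localizations of a Krull domain; no appeal to the Rees--McAdam lemmas, no verification that the $\hat\delta_j$ are semidegrees, no use of the uniqueness corollary), and it proves exactly the stated theorem; the paper's route costs those extra ingredients but yields more, namely that all of $\hat\delta_1,\ldots,\hat\delta_r$ are semidegrees with $\delta=\max_j\hat\delta_j$. Two small points to tidy: the limit formula for the $\delta_i$ of the given minimal presentation is recorded in the proof of Corollary \ref{sgf-unique-decomp} (or follows from the same large-$k$ computation you already make), so cite or redo it there; and in your final remark the monotone sequence $\delta((f_i)^kf)-kd_i$ is non-increasing and bounded \emph{below} by $\hat\delta_j(f)$ (not above), which is what forces $\delta_i(f)\in\zz$ when $\delta$ is integer valued.
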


\begin{proof}
Let $I$ be the ideal generated by $(1)_1$ in $\adelta$. By theorem \ref{gengfsgf-characterization}, $I$ is radical. It follows from \cite[Lemma 11.3 and 11.4]{mcdivisors} that for all $f \in A$ such that $\delta(f) \neq -\infty$, $\min_{j=1}^r \frac{\nu_j((f)_{\delta(f)})}{e_j} = 0$, where $e_j := \nu_j((1)_1)$ for each $j$. Therefore $\delta(f) = \delta(f) - \min_{j=1}^r \frac{\nu_j((f)_{\delta(f)})}{e_j} = \max_{j=1}^r \hat\delta_j(f)$ for all $f \in A$.

\begin{claim*} 
Each $\hat\delta_j$ satisfies the additive and multiplicative properties of a semidegree.
\end{claim*}

\begin{proof}
Fix a $j$, $1 \leq j \leq r$. Let $f, g \in A\setminus\{0\}$. If either $\delta(f)$ or $\delta(g)$ is $-\infty$, then it is straightforward to see that both $\hat\delta_j(fg)$ and $\hat\delta_j(f+g)$ satisfy the properties required of a semidegree. So assume both $\delta(f)$ and $\delta(g)$ are integers. At first we verify the multiplicative property. Let $\epsilon := \delta(f) + \delta(g) - \delta(fg)$. Then $\hat\delta_j(fg) = \delta(fg) - \frac{\nu_j((fg)_{\delta(fg)})}{e_j} = \delta(f) + \delta(g) - \frac{e_j\epsilon+\nu_j((fg)_{\delta(fg)})}{e_j} =  \delta(f) + \delta(g) - \frac{\nu_j((1)_\epsilon) + \nu_j((fg)_{\delta(fg)})}{e_j} =  \delta(f) + \delta(g) - \frac{\nu_j((fg)_{\delta(f)+\delta(g)})}{e_j} = \delta(f) + \delta(g) - \frac{\nu_j((f)_{\delta(f)}) + \nu_j((g)_{\delta(g)})}{e_j} = \hat\delta_j(f) + \hat\delta_j(g)$.\\

To verify the additive property, we first assume $d :=\delta(f) - \delta(g) > 0$, so that $\delta(f+g) = \delta(f)$ and $\del{f+g} = \del{f} + ((1)_1)^d\del{g}$. If $\nu_j(\del{f}) < de_j + \nu_j(\del{g})$, then $\nu_j(\del{f+g}) = \nu_j(\del{f})$ and hence $\hat\delta_j(f+g) = \hat\delta_j(f)$. Otherwise $\nu_j(\del{f+g}) \geq de_j + \nu_j(\del{g})$ and $\hat\delta_j(f+g) = \delta(f+g) - \frac{\nu_j(\del{f+g})}{e_j} \leq \delta(f) - d - \frac{\nu_j(\del{g})}{e_j} = \delta(g) - \frac{\nu_j(\del{g})}{e_j} = \hat\delta_j(g)$. Now assume $\delta(f) = \delta(g)$ and $\hat\delta_j(f) \geq \hat\delta_j(g)$. It follows that $\nu_j((f)_{\delta(f)}) \leq \nu_j((g)_{\delta(g)})$, and hence $\nu_j((f+g)_{\delta(f)}) = \nu_j((f)_{\delta(f)} + (g)_{\delta(g)}) \geq \nu_j((f)_{\delta(f)})$. Let $e := \delta(f) - \delta(f+g) \geq 0$. Then $\hat\delta_j(f+g) = \delta(f+g) - \frac{\nu_j(\del{f+g})}{e_j} = \delta(f) - e - \frac{\nu_j(\del{f+g})}{e_j} =  \delta(f) - \frac{\nu_j(((1)_1)^e\del{f+g})}{e_j} = \delta(f) - \frac{\nu_j((f+g)_{\delta(f)})}{e_j} \leq \delta(f) - \frac{\nu_j(\del{f})}{e_j} = \hat\delta_j(f)$. Therefore, $\hat\delta_j(f+g) \leq \max\{\hat\delta_j(f), \hat\delta_j(g)\}$. Similar arguments imply that if $\hat\delta_j(f) > \hat\delta_j(g)$, then $\hat\delta_j(f+g) = \hat\delta_j(f)$. 
\end{proof}
The first assertion of the theorem now follows from the uniqueness of the minimal presentation of a quasidegree. For the last statement, note that for all $f \in A\setminus\{0\}$, if $\delta(f) \in \zz$, then $\hat\delta_j(f) \in \qq$ for each $j$, $1 \leq j \leq r$, and hence $\delta_i(f) \neq -\infty$ for each $i$, $1 \leq i \leq N$.
\end{proof}

From this point on, all of our degree like functions will be integer valued. In the course of the proof of our main existence theorem to be stated below, we make use of the theory of {\em Rees' valuations} (see \cite[chapter XI]{mcdivisors}). For an ideal $I$ of a ring $R$ let $\nu_I: R \to \nn \cup \{\infty\}$ and $\bar \nu_I: R \to \qq_+ \cup \{\infty\}$ be defined by: $\nu_I(x) := \sup\{m: x \in I^m\}$ and $\bar \nu_I(x) := \lim_{m \to \infty} \frac{\nu_I(x^m)}{m}$. The following is due to Rees \cite[propositions 11.1, 11.5, corollary 11.6]{mcdivisors}:

\begin{thm*}[Rees]
For any ring $R$ and any ideal $I$ of $R$, $\bar \nu_I$ is well defined. If $R$ is a noetherian domain then
\begin{enumerate}
\item there is a positive integer $e$ such that for all $x \in R$, $\bar \nu_I(x) \in \frac{1}{e}\nn$, and
\item if $k \geq 0$ is an integer then $\bar \nu_I(x) \geq k$ if and only if $x \in \bar{I^k}$, where $\bar{I^k}$ is the integral closure of $I^k$ in $R$.
\end{enumerate}
\end{thm*}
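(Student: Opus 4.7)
The plan handles the three assertions in sequence, via a dictionary between $\bar\nu_I$ and the integral closure of the extended Rees algebra of $I$. Well-definedness follows from Fekete's lemma: if $a_m := \nu_I(x^m)$, then $x^m \in I^{a_m}$ and $x^n \in I^{a_n}$ give $x^{m+n} \in I^{a_m+a_n}$ and so $a_{m+n} \geq a_m + a_n$. The superadditive form of Fekete's lemma yields $\bar\nu_I(x) = \lim_{m\to\infty} a_m/m = \sup_m a_m/m$, a well defined element of $[0, \infty]$.

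For parts (1) and (2), I would introduce the extended Rees algebra $S := R[It, t^{-1}] \subseteq R[t, t^{-1}]$: a $\zz$-graded noetherian domain whose $n$-th homogeneous component is $I^n t^n$ (with $I^n := R$ for $n \leq 0$). The key dictionary is: for $k \in \nn$ and $x \in R$, $x \in \bar{I^k}$ if and only if $xt^k$ is integral over $S$. One direction is formal: any monic relation $x^n + \sum_j a_j x^{n-j} = 0$ with $a_j \in I^{jk}$ produces $(xt^k)^n + \sum_j (a_j t^{jk})(xt^k)^{n-j} = 0$ with coefficients in $S$. The reverse uses the $\zz$-grading: from an integral equation for $xt^k$ over $S$, extract the $t^{nk}$-homogeneous component, whose coefficients automatically lie in $I^{jk} t^{jk}$. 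By the Mori--Nagata theorem, the integral closure $\bar S$ of $S$ in its fraction field is a Krull domain, and the grading of $S$ transfers to give $\bar S = \bigoplus_n \bar{I^n}\, t^n$.

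Inside $\bar S$, the principal ideal $(t^{-1})\bar S$ is contained in only finitely many height-one primes $\ppp_1, \ldots, \ppp_r$ (a defining axiom of Krull domains). Writing $\nu_i$ for the discrete valuation of $\bar S_{\ppp_i}$ and $e_i := \nu_i(t^{-1})$, the valuative criterion for membership in a Krull domain gives $xt^k \in \bar S$ iff $\nu_i(x) \geq k e_i$ for every $i$; the remaining height-one primes localize to valuations on which both $t$ and $t^{-1}$ are units, hence impose no condition on $xt^k$ when $x \in R \subseteq \bar S$. Therefore the auxiliary function $w(x) := \min_i \nu_i(x)/e_i$ takes values in $\frac{1}{e}\nn$ for $e := \operatorname{lcm}(e_1,\ldots,e_r)$, and the dictionary yields $w(x) \geq k \Leftrightarrow x \in \bar{I^k}$.

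It remains to identify $\bar\nu_I$ with $w$, which delivers (1) and (2) simultaneously. The inequality $\bar\nu_I \leq w$ is immediate, since $x^m \in I^n$ forces $w(x^m) = m\,w(x) \geq n$ and hence $\nu_I(x^m)/m \leq w(x)$. The reverse inequality is the core of Rees's theorem and the principal obstacle: it requires a uniform asymptotic comparison between honest powers $I^n$ and their integral closures $\bar{I^n}$---concretely, the existence of a constant $c$ such that $x \in \bar{I^k}$ forces $x^m \in I^{mk-c}$ for all sufficiently large $m$, whence $\nu_I(x^m)/m \geq k - c/m \to k = w(x)$. Such a comparison is classical (by reduction theory of ideals, or by a Brian\c{c}on--Skoda-type bound), but it is the nontrivial input that makes the asymptotic and valuative definitions of $\bar\nu_I$ agree; once available, the formula $\bar\nu_I(x) = \min_i \nu_i(x)/e_i$ yields both the denominator bound of (1) and the characterization of $\bar{I^k}$ in (2) at a single stroke.
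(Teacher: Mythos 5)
The paper never proves this statement: it is quoted as Rees' theorem with a pointer to \cite{mcdivisors}, so there is no internal argument to compare yours against. On its own terms, your outline is the standard route behind the cited result: Fekete for well-definedness, the extended Rees algebra $S=R[It,t^{-1}]$, the degreewise dictionary ``$x\in\overline{I^k}$ iff $xt^k$ is integral over $S$,'' Mori--Nagata to make the integral closure $\bar S$ a Krull domain, and the finitely many height-one primes over $t^{-1}$ producing the valuations $\nu_i/e_i$ (these are exactly the Rees valuations, and incidentally the same mechanism the paper exploits later in its Theorem 2.2.6). One harmless misstatement: $\bar S=\bigoplus_n\overline{I^n}t^n$ is only correct after replacing $R$ by its normalization (the degree-$0$ piece of $\bar S$ is $\bar R$); you never use this, only the correct assertion that for $x\in R$ one has $xt^k\in\bar S$ iff $x\in\overline{I^k}$.

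The one substantive soft spot is that you outsource the inequality $w\le\bar\nu_I$ to ``reduction theory or a Brian\c{c}on--Skoda-type bound.'' The input you need is elementary and should be supplied, since the constant is allowed to depend on $x$: if $x\in\overline{J}$ satisfies an integral equation of degree $n$, then $x\,(J+xR)^{n-1}\subseteq J\,(J+xR)^{n-1}$, and iterating gives $x^{m+n-1}\in J^m$ for all $m$; taking $J=I^k$ yields $\nu_I(x^{m+n-1})\ge km$ and hence $\bar\nu_I(x)\ge k$ whenever $x\in\overline{I^k}$ --- no uniform Brian\c{c}on--Skoda or reduction-theoretic bound is needed. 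There is also a small slip in how you pass to the limit: you write ``$\to k=w(x)$,'' but $w(x)$ is generally a non-integral element of $\tfrac1e\nn$, so the argument as written only gives $\bar\nu_I(x)\ge\lfloor w(x)\rfloor$. Repair it by applying the dictionary to powers: $w(x^j)=jw(x)$ gives $x^j\in\overline{I^{\lfloor jw(x)\rfloor}}$, hence $j\,\bar\nu_I(x)=\bar\nu_I(x^j)\ge\lfloor jw(x)\rfloor$, and dividing by $j$ and letting $j\to\infty$ gives $\bar\nu_I(x)\ge w(x)$. With these two repairs your identification $\bar\nu_I=\min_i\nu_i/e_i$ is complete, and both assertions (1) and (2) follow exactly as you say.
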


\begin{defn*}
Let $\delta$ and $\eta$ be degree like functions on $A$.
\begin{itemize}
\item We say $\eta$ is {\em integral} over $\delta$ if $\eta(f) \leq \delta(f)$ for all $f \in A$, so that there is a natural inclusion $\profing \into \profinggg{\eta}$ and $\profinggg{\eta}$ is integral over $\profing$ under the above inclusion.
\item $\eta$ {\em preserves the intersections at $\infty$ for the completion determined by $\delta$} (in short, {\em for $\delta$}), if for any closed subsets $X_1, \ldots, X_k$ of $\spec A$ such that the closures of $X_i$ in $\proj \profing$ do not intersect at any point at infinity, the closures of $X_i$ in $\proj \profinggg{\eta}$ also do not intersect at any point at infinity.
\end{itemize}
\end{defn*}

\begin{lemma}\label{inf-intersection-preserve-lemma}
Let $\delta$ and $\eta$ be \nonnegative degree like functions on $A$. If $\eta$ is integral over $\delta$ or $\eta = n\delta$ for some $n > 0$, then $\eta$ preserves the intersections at $\infty$ for $\delta$.
\end{lemma}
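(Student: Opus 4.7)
The plan is to reduce both cases to the construction of a morphism $\pi : \proj \profinggg{\eta} \to \proj \profing$ that restricts to the identity on $\spec A$, sends $X_\infty^\eta := V((1)_1^\eta)$ into $X_\infty^\delta := V((1)_1^\delta)$, and, for each closed $V_i \subseteq \spec A$ with ideal $q_i$, sends the closure $\bar V_i^\eta = V(q_i^\eta)$ into $\bar V_i^\delta = V(q_i^\delta)$. Once such a $\pi$ is available, any point in $\bar V_1^\eta \cap \cdots \cap \bar V_k^\eta \cap X_\infty^\eta$ would map to one in $\bar V_1^\delta \cap \cdots \cap \bar V_k^\delta \cap X_\infty^\delta$, and the desired implication will follow by contrapositive.

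For the case that $\eta$ is integral over $\delta$, I would take $\pi$ to be the morphism induced on $\proj$ by the graded inclusion $\iota : \profing \hookrightarrow \profinggg{\eta}$, namely $\pi(p) := \iota^{-1}(p)$. The step I expect to be the main obstacle is verifying that $\pi$ is defined on all of $\proj \profinggg{\eta}$, i.e.\ that $\iota^{-1}(p)$ never coincides with the irrelevant ideal $\profing_+$. My plan is to argue this by the graded analogue of the standard fact that integral extensions are well-behaved under $\proj$: if $\profing_+ \subseteq \iota^{-1}(p)$, then every homogeneous $(g)_e^\eta$ of positive degree satisfies a homogeneous integral equation over $\profing$ whose non-leading coefficients are homogeneous elements of $\profing_+$; via $\iota$ these coefficients all lie in $p$, so the equation forces $(g)_e^\eta \in p$, making $p$ the irrelevant ideal of $\profinggg{\eta}$, which contradicts $p \in \proj \profinggg{\eta}$. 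The remaining compatibilities will then be straightforward: $\iota$ sends $(1)_1^\delta$ to $(1)_1^\eta$, and the inequality $\eta(f) \leq \delta(f)$ makes $q_i^\delta \subseteq \iota^{-1}(q_i^\eta)$ clear, since $f \in q_i$ with $\delta(f) \leq d$ forces $\eta(f) \leq d$ and thus $\iota((f)_d^\delta) = (f)_d^\eta \in q_i^\eta$.

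For the case $\eta = n\delta$, the equality $F_{nm}^\eta = F_m^\delta$ will let me identify the $n$-th Veronese $(\profinggg{\eta})^{(n)} = \bigoplus_m F_{nm}^\eta t^{nm}$ with $\profing$ as graded rings via the bijection $(f)_m^\delta \mapsto (f)_{nm}^\eta$. Since passage to a Veronese subring does not alter $\proj$, this produces a canonical isomorphism $\pi : \proj \profinggg{\eta} \xrightarrow{\sim} \proj \profing$. Under this identification $(1)_1^\delta$ corresponds to $((1)_1^\eta)^n$ and $q_i^\delta$ corresponds to $q_i^\eta \cap (\profinggg{\eta})^{(n)}$, so $X_\infty$ and each $\bar V_i$ match up on the two sides and the claim becomes a tautology in this case.
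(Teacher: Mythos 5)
Your proof is correct, and the two points you single out as delicate do go through: for a homogeneous $(g)_e$ of positive degree in $A^{\eta}$, taking the degree-$me$ component of an integral equation over $A^{\delta}$ yields one whose lower coefficients are homogeneous of positive degree, hence lie in $A^{\delta}_+$, so a homogeneous prime $p$ with $\iota^{-1}(p)\supseteq A^{\delta}_+$ would contain all of $A^{\eta}_+$, contradicting $p\in\proj A^{\eta}$; and the identification of $\overline{V_i}$ with $V(q_i^{\delta})$ (resp.\ $V(q_i^{\eta})$) holds because $(\sqrt{q_i})^{\delta}\subseteq\sqrt{q_i^{\delta}}$, a fact the paper also uses tacitly when it converts its hypothesis into an ideal-theoretic one. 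Where you differ from the paper is in packaging rather than substance. The paper constructs no morphism of completions: it rewrites ``the closures do not meet at infinity'' as $A^{\delta}_+\subseteq\sqrt{\scrI_\delta}$ with $\scrI_\delta=\langle q_1^{\delta},\ldots,q_k^{\delta},(1)_1\rangle$, notes that under the inclusion $A^{\delta}\subseteq A^{\eta}$ one has $\scrI_\delta\subseteq\scrI_\eta$ and hence $A^{\delta}_+\subseteq\sqrt{\scrI_\eta}$, and then uses integrality of $A^{\eta}_+$ over $A^{\delta}_+$ to conclude $A^{\eta}_+\subseteq\sqrt{\scrI_\eta}$; the case $\eta=n\delta$ is reduced to the integral case by observing that $t\mapsto s^n$ embeds $A^{\delta}$ into $A^{\eta}$ with $A^{\eta}$ integral over it. You make the same key algebraic input (positive-degree elements of $A^{\eta}$ integral over $A^{\delta}_+$) do geometric duty, namely to show $p\mapsto\iota^{-1}(p)$ lands in $\proj A^{\delta}$, and then transport points by contraposition; and in the case $\eta=n\delta$ your Veronese identification gives strictly more than the paper's reduction, namely that $\proj A^{\eta}\to\proj A^{\delta}$ is an isomorphism compatible with the two copies of $\spec A$, so preservation at infinity transfers in both directions. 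The paper's radical computation is marginally more economical since it treats both cases by one argument; yours yields a bit more geometric information (a morphism of the two completions extending the identity of $\spec A$, an isomorphism in the second case), at the cost of having to verify that the map on $\proj$ is everywhere defined.
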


\begin{proof}
For an ideal $I$ of $A$, we denote by $I^{\delta}$ (respectively $I^{\eta}$) the `homogenization' of $I$ in $\profing$ (respectively $\profinggg{\eta}$). Recall that $\profing_+$ (resp. $\profinggg{\eta}_+$) denotes the irrelevant homogenous ideal of $\profing$ (resp. $\profinggg{\eta}$). Let $X_j := V(I_j)$ be closed subsets of $\spec A$ for $1 \leq j \leq k$. Assume that the closures of $X_i$ in $\proj \profing$ do not intersect at any points at infinity. It follows that the radical of the ideal $\scrI_\delta := \langle (I_1)^\delta, \ldots, (I_k)^\delta, (1)_1 \rangle$ equals $\profing_+$.\\

At first assume $\eta$ is integral over $\delta$. Then $\eta \leq \delta$ and there is a natural inclusion $\profing \subseteq \profinggg{\eta}$. Moreover, under the above inclusion, $I^\delta \subseteq I^\eta$ for each ideal $I$ of $A$. It follows that $\scrI_\delta \subseteq \scrI_\eta$, where $\scrI_\eta := \langle (I_1)^\eta, \ldots, (I_k)^\eta, (1)_1 \rangle \subseteq \profinggg{\eta}$. But then $\profing_+ = \sqrt \scrI_\delta \subseteq \sqrt \scrI_\eta$. Since each element of $\profinggg{\eta}_+$ is integral over $\profing_+$, it follows that $\profinggg{\eta}_+ \subseteq \sqrt \scrI_\eta$, and therefore the closures of $X_i$ in $\proj \profinggg{\eta}$ do not intersect at any points at infinity.\\

Now assume $\eta = n\delta$. Let $\scrF := \{F_d\}_{d \geq 0}$ (resp. $\scrG := \{G_d\}_{d \geq 0}$) be the filtration corresponding to $\delta$ (resp. $\eta$). Recall that $\profing \cong \dsum_{d \geq 0}F_dt^d$ and $\profinggg{\eta} \cong \dsum_{d \geq 0}G_ds^d$, where $s,t$ are indeterminates over $A$. But for each $d \geq 0$, $G_d := \{f \in A: \eta(f) \leq d\} = \{f \in A: \delta(f) \leq d/n\}  = F_{\lfloor d/n \rfloor}$. Therefore $\profinggg{\eta} \cong \dsum_{d \geq 0} F_{\lfloor d/n \rfloor}s^d$. The map $t \mapsto s^n$ gives an inclusion of $\profing$ into $\profinggg{\eta}$ such that $\profinggg{\eta}$ is integral over $\profing$. The same argument as in the previous paragraph then gives the desired result.
\end{proof}

\begin{thm}[Main Existence Theorem]\label{thm-noeth-integral-quasidegree}
Let $\delta$ be a finitely generated degree like function on $A$ (in particular, $\adelta$ is finitely generated). Then there is a finitely generated filtration $\tilde\delta$ on $A$ and a positive integer $e$ such that $\tilde \delta$ is a quasidegree and $\tilde \delta$ is integral over $e\delta$. If $\delta$ is non-negative, then $\tilde \delta$ is also non-negative, and $\tilde \delta$ preserves the intersections at $\infty$ for $\delta$.
\end{thm}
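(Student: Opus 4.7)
The strategy is to extract $\tilde\delta$ from the Rees-valuation data of the normalisation of the completion $X^\delta:=\proj\profingg{A}{\delta}$. We treat the case when $A$ is a domain; the general case reduces to this by working modulo the minimal primes. By lemma \ref{profing-domain-lemma}, $\profingg{A}{\delta}$ is a finitely generated graded $\kk$-algebra domain, and since $\kk$ is algebraically closed the integral closure $B$ of $\profingg{A}{\delta}$ in $\operatorname{Frac}\profingg{A}{\delta}$ is module-finite over $\profingg{A}{\delta}$, hence itself a finitely generated graded $\kk$-algebra and a Noetherian Krull domain; the induced map $\proj B\to X^\delta$ is the normalisation. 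Let $\ppp_1,\ldots,\ppp_r$ be the height-one primes of $B$ containing $Z:=(1)_1$, let $\nu_j$ denote the discrete valuation of $B_{\ppp_j}$, and put $e_j:=\nu_j(Z)$ and $e:=\operatorname{lcm}(e_1,\ldots,e_r)$.

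For each $j$ define $\tilde\delta_j:A\setminus\{0\}\to\zz\cup\{-\infty\}$ by
\[
\tilde\delta_j(f)\;:=\;e\,\delta(f)\;-\;\tfrac{e}{e_j}\,\nu_j\bigl(\del{f}\bigr)\qquad\text{when }\delta(f)\neq-\infty,
\]
and $\tilde\delta_j(f):=-\infty$ otherwise. Integrality of $e/e_j$ and of $\nu_j$ makes $\tilde\delta_j$ integer-valued. The valuation computation appearing in the proof of theorem \ref{quasi-integer-degree} works verbatim here: it uses only that $\nu_j$ is a discrete valuation, together with the identities $\del{f}\,\del{g}=(fg)_{\delta(f)+\delta(g)}$ and $\del{f}+Z^{\delta(f)-\delta(g)}\,\del{g}=(f+g)_{\delta(f)}$ (for $\delta(f)\ge\delta(g)$), which hold for any degree-like $\delta$ and not just for quasidegrees. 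So each $\tilde\delta_j$ is an integer-valued semidegree on $A$, and $\tilde\delta:=\max_{j}\tilde\delta_j$ is by construction a quasidegree. Because $\del{f}\in\profingg{A}{\delta}\subseteq B$ yields $\nu_j(\del{f})\ge 0$, we obtain $\tilde\delta\le e\delta$ at once; in particular $\tilde\delta$ is integral over $e\delta$ in the sense of the definition preceding lemma \ref{inf-intersection-preserve-lemma}.

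For non-negativity when $\delta\ge 0$: if $f\in\kk^{*}$ then $\nu_j(\del{f})=0$ for every $j$, so $\tilde\delta(f)=0$; if $f\in A\setminus\kk$, then $f$, regarded via $A\subseteq B_{(Z)}$ as a non-constant rational function on the projective variety $\proj B$ which is regular on the dense open $\{Z\ne 0\}$, must have a pole along some boundary component $\ppp_{j_0}$, so $\nu_{j_0}(f)<0$; then $\nu_{j_0}(\del{f})=\nu_{j_0}(f)+\delta(f)\,e_{j_0}<\delta(f)\,e_{j_0}$, which forces $\tilde\delta_{j_0}(f)>0$ and hence $\tilde\delta(f)>0$. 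Preservation of intersections at $\infty$ for $\delta$ now follows from two applications of lemma \ref{inf-intersection-preserve-lemma}: first $e\delta=e\cdot\delta$ preserves them for $\delta$, then $\tilde\delta\le e\delta$ forces $\tilde\delta$ to preserve them for $e\delta$.

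The main technical hurdle is showing that $\profingg{A}{\tilde\delta}$ is a finitely generated $\kk$-algebra. The plan is to embed $\profingg{A}{\tilde\delta}$ as a graded $\profingg{A}{e\delta}$-submodule of a Veronese-indexed copy of $B$, thereby placing it inside a module-finite extension of $\profingg{A}{e\delta}$; since $\profingg{A}{e\delta}$ is itself Noetherian, $\profingg{A}{e\delta}$-submodules of finite $\profingg{A}{e\delta}$-modules are again finite, and so $\profingg{A}{\tilde\delta}$ is module-finite over $\profingg{A}{e\delta}$, hence finitely generated over $\kk$. The delicate point is matching the denominators $1/e_j$ in the definition of $\tilde\delta_j$ against the natural grading of $B$ and tracking how they combine under the maximum defining $\tilde\delta$; once the indexing bookkeeping is made precise, module-finiteness of $B$ over $\profingg{A}{\delta}$ propagates through the $e$-th Veronese identification $\profingg{A}{\delta}\cong(\profingg{A}{e\delta})^{[e]}$ to module-finiteness of $\profingg{A}{\tilde\delta}$ over $\profingg{A}{e\delta}$.
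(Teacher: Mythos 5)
Your construction of $\tilde\delta$ is, in substance, the paper's: since the Rees valuation $\bar\nu_I$ of the principal ideal $I=\langle (1)_1\rangle$ in $\adelta$ satisfies $\bar\nu_I=\min_j \nu_j/e_j$ on homogeneous elements, your $\max_j\tilde\delta_j$ is exactly $e\cdot\lim_m\delta(f^m)/m$, and your observations that the valuation computation from theorem \ref{quasi-integer-degree} only uses degree-likeness of $\delta$ (so each $\tilde\delta_j$ is a semidegree, and $\tilde\delta$ is a quasidegree without first knowing finite generation), and that non-negativity follows from a pole argument on $\proj B$, are fine. The genuine gap is at the technical heart of the theorem. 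The step ``we obtain $\tilde\delta\le e\delta$ at once; in particular $\tilde\delta$ is integral over $e\delta$'' is not valid: in the definition preceding lemma \ref{inf-intersection-preserve-lemma}, integrality of one degree like function over another requires, beyond the inequality, that the ring extension $\profingg{A}{e\delta}\subseteq\profingg{A}{\tilde\delta}$ be integral, and that is precisely what must be proved. The first case of lemma \ref{inf-intersection-preserve-lemma} genuinely uses this ring integrality (it pulls radical membership of the irrelevant ideal through an integral extension), so your two-step preservation argument (``then $\tilde\delta\le e\delta$ forces $\tilde\delta$ to preserve them'') inherits the gap rather than closing it.

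The same missing ingredient is what your ``plan'' for finite generation defers to bookkeeping. To embed the $e$-th truncation of $\profingg{A}{\tilde\delta}$ into $B$ (or to get any module-finite extension of $\profingg{A}{e\delta}$ containing it) you must show that whenever $\tilde\delta(f)\le em<e\delta(f)$, the element $\del{f}\,/\,((1)_1)^{\delta(f)-m}$ of the fraction field of $\adelta$ is integral over $\adelta$, i.e.\ that $\del{f}$ lies in the integral closure of $I^{\delta(f)-m}$. This is exactly the second half of Rees' theorem as quoted in the paper ($\bar\nu_I(x)\ge k$ iff $x\in\bar{I^k}$), which you never invoke; the paper's Claim uses it to write an explicit equation of integral dependence, from which both the integrality of $\tilde\delta$ over $e\delta$ and module-finiteness (hence finite generation of $\profingg{A}{\tilde\delta}$, and then the applicability of lemma \ref{inf-intersection-preserve-lemma}) follow. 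Knowing only that $\nu_j(\del{f})\ge e_j(\delta(f)-m)$ for every $j$ does not by itself produce such an equation unless you pass through this integral-closure characterization. Finally, the opening reduction ``the general case reduces to this by working modulo the minimal primes'' is asserted without justification; like the paper's proof, your valuation-theoretic setup really requires $A$ to be a domain, so this reduction would need an actual argument or the hypothesis should be acknowledged.
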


\begin{proof}
Let $\scrF := \{F_d\}_{d \in \zz}$ be the filtration on $A$ corresponding to $\delta$. Consider the isomorphism $\adelta \cong \bigoplus_{i \in \zz} F_it^i$, where $t$ is an indeterminate and $(1)_1$ is mapped to $t$. Let $I$ be the ideal in $\adelta$ generated by $(1)_1$. With $\nu_I$ and $\bar \nu_I$ as defined above, let $e$ be a positive integer provided by Rees' theorem such that for all $H \in \af$, $\bar \nu_I(H) \in \frac{1}{e}\nn$.\\

Fix $h \in A$ and $m \in \nn$. Then $\delta(h^m) \leq m\delta(h)$. Moreover, since $I$ is generated by $(1)_1$, it follows that $k:=m\delta(h) - \delta(h^m)$ is the largest integer such that $(h^m)_{m\delta(h)} \in I^{k}$. The definition of $\nu_I$  implies that $\nu_I(((h)_{\delta(h)})^m) = k = m\delta(h) - \delta(h^m)$. Therefore $\delta(h^m)/m = \delta(h) - \nu_I(((h)_{\delta(h)})^m)/m$. It follows that $\bar\delta(h) := \lim_{m \to \infty} \delta(h^m)/m$ is well defined and equals $\delta(h) - \bar \nu_I((h)_{\delta(h)})$. \\

For $m \in \zz$, let $\bar F_{\frac{m}{e}} := \{f \in A: \bar\delta(h) \leq \frac{m}{e}\}$, and consider ring $\profinggg{\bar\delta} := \bigoplus_{m \in \zz} \bar F_{\frac{m}{e}}t^{\frac{m}{e}}$. Since $\bar \delta \leq \delta$, it follows that $F_k \subseteq \bar F_k$ for each $k \in \zz$. Therefore $\adelta \subseteq \profinggg{\bar\delta}$.

\begin{claim*}
$\profinggg{\bar\delta}$ is integral over $\adelta$.
\end{claim*}

\begin{proof}
It suffices to show that for each $h \in A$, $(h)_{\bar\delta(h)}$ is integral over $\adelta$. Pick $h \in A$. Then $(h)_{\bar\delta(h)}$ is integral over $\adelta$ if and only if $\bar H := ((h)_{\bar\delta(h)})^e$ is integral over $\adelta$. From construction of $\bar\delta$ it follows that $\bar H = (h^e)_{e\bar\delta(h)} = (h^e)_{\bar\delta(h^e)}$. Let $H := (h^e)_{\delta(h^e)} \in \adelta$ and $\bar k := \bar \nu_I(H)$. Then $\bar k =  \delta(h^e) - \bar\delta(h^e) = \delta(h^e) - e\bar\delta(h)$. It follows that $\bar k$ is an integer. Hence by Rees' theorem, $H$ is in the integral closure of $I^{\bar k}$ in $\adelta$, i.e. $H$ satisfies an equation of the form $H^l + G_1H^{l-1}+ \cdots + G_l = 0$, where $G_i \in I^{i\bar k}$ for each $i$. Since $\profinggg{\bar\delta}$ is a graded ring, we may assume without loss of generality that the degrees of $G_i$ are $i\delta(h^e)$. Then $G_i = (g_i)_{i(\delta(h^e)-\bar k)}(1)_{i\bar k}$ for some $g_i \in A$ with $\delta(g_i) \leq i(\delta(h^e)-\bar k) = i\bar\delta(h^e)$, $1 \leq i \leq l$. Moreover, in the ring $\profinggg{\bar\delta}$, $H = (h^e)_{\delta(h^e)} = (h^e)_{\bar\delta(h^e)+\bar k} = (h^e)_{\bar\delta(h^e)}(1)_{\bar k} = (1)_{\bar k}\bar H$. Substituting these values of $H$ and $G_i$ into the equation of integral dependence for $H$ and then cancelling a factor of $(1)_{l\bar k}$ we conclude that $(\bar H)^l + \sum_{i=1}^l (g_i)_{i\bar\delta(h^e)}(\bar H)^{l-i} = 0$. For each $i$, since $\delta(g_i) \leq i\bar\delta(h^e)$, it follows that $(g_i)_{i\bar\delta(h^e)} \in \adelta$. Thus $\bar H$ is integral over $\adelta$, which completes the proof of the claim.
\end{proof}

Let $\tilde \delta := e\bar\delta$ and let $\tilde \scrF := \{\tilde F_d\}_{d \in \zz}$ be the corresponding filtration. Since $\tilde \delta \leq e\delta$, $\profinggg{e\delta} \subseteq \profinggg{\tilde\delta}$. But observe that $\tilde F_d = \{f : e\bar\delta(h) \leq d\} = \bar F_{\frac{d}{e}}$, and hence the mapping $t \to s^e$ gives an isomorphism $\profinggg{\bar\delta} := \bigoplus_{d \in \zz} \bar F_{\frac{d}{e}}t^{\frac{d}{e}} \overset{\cong}{\longrightarrow} \bigoplus_{d \in \zz}\tilde F_ds^d \cong \profinggg{\tilde\delta}$. Arguments similar
to those in the 2nd part of Lemma \ref{inf-intersection-preserve-lemma} allow to conclude that the restriction of the above map to $\profing$ gives a chain of inclusion $\profing \subseteq \profinggg{e\delta} \subseteq \profinggg{\tilde\delta}$. Once again as in the proof of lemma \ref{inf-intersection-preserve-lemma} we conclude $\profinggg{e\delta}$ is integral over $\adelta$, and by the above claim $\profinggg{\tilde\delta}$ is integral over $\adelta$. It follows that $\profinggg{\tilde\delta}$ is integral over $\profinggg{e\delta}$. Moreover, since $\adelta$ is a finitely generated $\kk$-algebra, it also follows that $\profinggg{\tilde\delta}$ is a finitely generated $\kk$-algebra.\\

By construction $\tilde \delta(f^m) = m\tilde \delta(f)$ for all $f$ and $m$. It follows that the ideal generated by $(1)_1$ in $A^{\tilde \delta}$ is radical. Therefore theorem \ref{gengfsgf-characterization} implies that $\tilde\delta$ is a quasidegree. This completes the proof of the first assertion of Theorem \oldref{thm-noeth-integral-quasidegree}. If $\delta$ is non-negative, then by construction $\tilde \delta$ is also non-negative, and applying lemma \ref{inf-intersection-preserve-lemma}, we deduce the second assertion.
\end{proof}

We summarize the results of theorems \ref{thm-noeth-integral-quasidegree}, \ref{filtrexistence-thm1} and \ref{filtrexistence-thm2} in the following

\begin{cor}\label{sgfiltrexistence}
Let $X$ be an affine variety of dimension $n$ and $A$ be the coordinate ring of $X$.
\begin{enumerate}
\item Let $V_1, \ldots, V_m$ be closed subvarieties of $X$ such that $\bigcap_i V_i$ is a finite set. Then there is a \complete \sgf $\delta$ on $A$ such that the corresponding completion $\psi_\delta$ of $X$ preserves the intersection of the $V_i$'s at $\infty$.

\item Let $f: X \to Y$ be a dominating map of affine varieties. Then there is a \complete \sgf $\delta$ on $A$ such that $\psi_\delta$ preserves $f$ at $\infty$. \qed
\end{enumerate}
\end{cor}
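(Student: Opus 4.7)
The plan is to derive the corollary by combining the existence theorems for filtrations (Theorems \ref{filtrexistence-thm1} and \ref{filtrexistence-thm2}) with the Main Existence Theorem (Theorem \ref{thm-noeth-integral-quasidegree}). Both parts follow the same two-step scheme: first invoke the relevant filtration-existence theorem to produce a complete filtration whose associated completion has the desired intersection-preservation property, and then feed this filtration into Theorem \ref{thm-noeth-integral-quasidegree} to upgrade it to a quasidegree that retains the property.

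For (1), Theorem \ref{filtrexistence-thm1} furnishes a complete filtration, i.e.\ a finitely generated non-negative degree-like function $\delta_0$ on $A$ with $F_0^{\delta_0}=\kk$, such that $\psi_{\delta_0}$ preserves the intersection of $V_1,\ldots,V_m$ at $\infty$. Theorem \ref{thm-noeth-integral-quasidegree} then produces a finitely generated, non-negative quasidegree $\delta$ on $A$, integral over $e\delta_0$ for some positive integer $e$, which preserves all intersections at $\infty$ for $\delta_0$; applying this to $V_1,\ldots,V_m$ gives that $\psi_\delta$ preserves their intersection at $\infty$. For (2), the same scheme with Theorem \ref{filtrexistence-thm2} in place of Theorem \ref{filtrexistence-thm1} yields a complete filtration $\delta_0$ whose completion preserves $f$ at $\infty$; that is, there is a nonempty Zariski open $U\subseteq f(X)$ such that for every $a\in U$ the finitely many hypersurfaces cutting out $f^{-1}(a)$ have closures with empty intersection at $\infty$ in $\proj A^{\delta_0}$. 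Applying the intersection-preservation property of Theorem \ref{thm-noeth-integral-quasidegree} to each such fiber separately transfers the conclusion to $\proj A^\delta$ for every $a\in U$, so $\psi_\delta$ preserves $f$ at $\infty$.

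The one point not immediate from Theorem \ref{thm-noeth-integral-quasidegree} is that the resulting quasidegree $\delta$ is \emph{complete}, i.e.\ that $F_0^\delta=\kk$; non-negativity and finite generation follow directly from that theorem. I expect this to be the only real obstacle, and the plan is as follows. Take $f\in F_0^\delta$, regarded as a degree-$0$ element of $A^\delta$. Since $A^\delta$ is integral over $A^{e\delta_0}$, and the latter is in turn integral over $A^{\delta_0}$ (cf.\ Lemma \ref{inf-intersection-preserve-lemma}), $f$ satisfies a monic polynomial relation with coefficients in $A^{e\delta_0}$. Extracting the degree-$0$ components of this relation replaces each coefficient by its degree-$0$ piece, which lies in $(A^{e\delta_0})_0=F_0^{e\delta_0}=\kk$. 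Thus $f$ is integral over $\kk$, and since $\kk$ is algebraically closed and $A$ is a domain containing $\kk$, one concludes $f\in\kk$, whence $F_0^\delta=\kk$ as required.
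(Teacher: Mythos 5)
Your proposal is correct and is exactly the route the paper intends: the corollary is stated as a summary of Theorems \ref{filtrexistence-thm1}, \ref{filtrexistence-thm2} and \ref{thm-noeth-integral-quasidegree}, obtained by first producing a complete filtration with the desired preservation property and then upgrading it to a quasidegree via the Main Existence Theorem, whose ``preserves intersections at $\infty$'' clause transfers the property (fiberwise, in case (2)). Your extra verification that $F_0^{\tilde\delta}=\kk$ --- taking the degree-zero part of an equation of integral dependence over $A^{e\delta_0}$ to see that a degree-zero element is integral over $F_0^{e\delta_0}=\kk$, hence lies in $\kk$ since $A$ is a domain over the algebraically closed field $\kk$ --- is a correct filling-in of a detail the paper leaves implicit.
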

\subsection{A Quasifinite Map with Points at Infinity for any Semidegree} \label{subsec-semi-ctr-example}
\renewcommand{\filtrationchar}{\delta}

Let $X = Y = \affine[2]{\kk}$ and $f := ((x_1^2 - x_2^4)^2 + x_1x_2, (x_1^2 - x_2^4)^3 + x_1x_2):X \to Y$. Then $f$ is a quasifinite map. We show below that there is {\em no} complete semidegree $\delta$ on $A := \kk[x_1,x_2]$ such that $\psi_\delta$ preserves $f$ at $\infty$.\\

Let $a := (a_1, a_2) \in \affine[2]{\kk}$. As in section \oldref{subsec-filtrintro-existence}, let $H_{i}(a) := \{x \in X: f_i(x) = a_i\}$. Let $\delta$ be any complete \gf on $A$ with associated filtration $\scrF = \{F_d\}_{d \geq 0}$. Recall the notation of section \oldref{subsec-semiquasidegree-properties}: given an ideal $\ppp$ of $A$, let $\ppp^\delta := \dsum_{d \geq 0} (p \cap F_d)$ be the corresponding `homogenous' ideal of \profing. Let $\ppp_i = \langle f_i - a_i \rangle \subseteq A$. We will show that $V(\ppp^\delta_1, \ppp^\delta_2, (1)_1) \neq \emptyset$, or equivalently $\sqrt \scrI \subsetneqq \adelta_+$, where $\scrI := \langle \ppp^\delta_1, \ppp^\delta_2, (1)_1 \rangle$ and $\adelta_+$ is the maximal homogenous ideal of $\profing$.\\

Let $d_i := \delta(x_i)$. If $d_1>2d_2$, then $\delta(f_1-a_1)= \delta(x_1^4) = 4d_1$, and $\delta(f_1-a_1-x_1^4) < 4d_1$. Hence, $\langle \tilde\ppp_1, (1)_1 \rangle = \langle (f_1-a_1)_{4d_1}, (1)_1 \rangle = \langle ((x_1)_{d_1})^4, (1)_1 \rangle$. Similarly $\langle \tilde\ppp_2, (1)_1 \rangle = \langle ((x_1)_{d_1})^6, (1)_1 \rangle$. Thus $\scrI = \langle ((x_1)_{d_1})^4, (1)_1 \rangle$. But then $\sqrt \scrI \neq \adelta_+$, for if it were true, then $\scrI$ would be an $\adelta_+$-primary ideal in $\profing$ generated by $2$ elements. Since $\adelta_+$ is a maximal ideal of $\profing$, it would follow that dimension of $\spec \profing$ will be at most 2 (\cite{am}, Theorems 11.14 and 11.25). But this is impossible, since dimension of $\spec \profing$ is 3.\\

Similarly, if $d_1<2d_2$, then $\scrI = \langle ((x_2)_{d_2})^8, (1)_1 \rangle$, and by the same reasoning as above, $\sqrt \scrI \neq \adelta_+$. So assume $d_1 = 2d_2 = d$. Now $x_1^2 - x_2^4 = (x_1-x_2^2)(x_1+x_2^2)$. Note that $\delta(x_1 \pm x_2^2) \leq d$. But, since $\delta((x_1-x_2^2)+(x_1+x_2^2)) = d$, at least one of $x_1+x_2^2$ and $x_1-x_2^2$ has $\delta$-value $d$, whereas the other has $\delta$-value at least $1$ (since $\delta$ is complete). Thus $\delta((x_1-x_2^2)^2(x_1+x_2^2)^2) \geq 2d + 2 = 4d_2 + 2 > 3d_2 = \delta(x_1x_2)$. Hence $\delta(f_1-a_1 - (x_1^2-x_2^4)^2) < \delta(f_1 - a_1)$. It follows that $\langle \ppp^\delta_1, (1)_1 \rangle = \langle ((x_1^2-x_2^4)_{d'})^2,(1)_1 \rangle$, where $d' := \delta(x_1^2-x_2^4)$. Similarly, $\langle \ppp^\delta_2, (1)_1 \rangle = \langle ((x_1^2-x_2^4)_{d'})^3, (1)_1 \rangle$. Thus $\scrI = \langle ((x_1^2-x_2^4)_{d'})^2, (1)_1 \rangle$, and since $\scrI$ is generated by 2 elements, the same argument as in the previous paragraph shows that $\sqrt \scrI \neq \adelta_+$.\\

So we proved that for every complete \gf $\delta$ on $A$, $\psi_\delta$ does not preserve {\em any} fiber of $f$ at $\infty$. Hence none of the assertions of corollary \ref{sgfiltrexistence} would remain valid if we replace in its conclusion the `quasidegree' by a `semidegree'. \\

{\bf A quasidegree that preserves $f$ at infinity:} By corollary \ref{sgfiltrexistence}, we know that there exist completions of $x$ determined by complete \sgff s on $A$ which preserve $f$ at $\infty$. The filtration $\scrF$ corresponding to one such \sgf $\delta$ is defined as follows: $F_0 = \kk$, $F_1 = \kk\langle 1, x_2, x_1^2-x_2^4 \rangle$, $F_2 = (F_1)^2 + \kk\langle x_1 \rangle$, and $F_d = \sum_{j=1}^{d-1} F_jF_{d-j}$ for $d \geq 2$.\\

It is easy to see that $\delta = \max\{\delta_1, \delta_2\}$ where $\delta_1$ is the weighted degree on $A$ that assigns weight $1$ to $x_2$ and $-1$ to $x_1-x_2^2$, and $\delta_2$ is the weighted degree on $A$ that assigns weight $1$ to $x_2$ and $-1$ to $x_1+x_2^2$.

\begin{claim*}
$\psi_\delta$ preserves {\em every} fiber of $f$ at $\infty$.
\end{claim*}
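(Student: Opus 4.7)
My plan is to embed $\bar X := \xdelta$ explicitly into a weighted projective space and then verify by a short direct calculation that the homogenizations of $f_1 - a_1$ and $f_2 - a_2$ have no common zero on the hyperplane at infinity. First I would apply corollary \ref{completecor} to the generators $(1)_1, (x_2)_1, (x_1^2 - x_2^4)_1, (x_1)_2$ of $\profingg{A}{\delta}$, which yields a closed embedding of $\bar X$ into $\pp^3(\kk; 1, 1, 1, 2)$ with weighted homogeneous coordinates $[W : X_2 : U : X_1]$ of weights $1, 1, 1, 2$. The only relation among these generators is $x_1^2 = (x_1^2 - x_2^4) + x_2^4$, so $\bar X = V(X_1^2 - X_2^4 - UW^3)$. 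Next I would decompose the hyperplane at infinity $X_\infty = \bar X \cap V(W)$ as $C_1 \cup C_2$, where $C_i := V(W,\, X_1 - (-1)^{i-1}X_2^2)$; by the corollary to theorem \ref{gengfsgf-characterization}, these are exactly the components corresponding to the semidegrees $\delta_1, \delta_2$.

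Next, for each $a = (a_1, a_2) \in \kk^2$ I would exhibit explicit representatives of the classes $(f_i - a_i)_3 \in \profingg{A}{\delta}_3$ inside the coordinate ring of $\pp^3(\kk; 1, 1, 1, 2)$. The key preliminary is that $\delta(f_i - a_i) = 3$ for $i = 1, 2$: the leading form of $x_1 x_2$ in $\gr A^{\delta_j}$ lives in $\gr A^{\delta_j}_3$ and is not proportional to the leading form of $(x_1^2 - x_2^4)^j$ there, so no cancellation is possible. Expressing each summand of $f_i - a_i$ as an element of $F_3$ would then produce the representatives
\begin{align*}
\Phi_1 := (f_1 - a_1)_3 &= W U^2 + X_1 X_2 - a_1 W^3, \\
\Phi_2 := (f_2 - a_2)_3 &= U^3 + X_1 X_2 - a_2 W^3.
\end{align*}
Since $\Phi_i|_{W = 1} = f_i - a_i$, we have $\overline{H_i(a)} \subseteq V(\Phi_i) \cap \bar X$, so it would suffice to prove $V(\Phi_1, \Phi_2, W) \cap \bar X = \emptyset$ in $\pp^3(\kk; 1, 1, 1, 2)$.

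This last intersection is empty by inspection: setting $W = 0$ reduces $\Phi_1$ to $X_1 X_2$ and $\Phi_2$ to $U^3 + X_1 X_2$, forcing $U = X_1 X_2 = 0$; combined with $X_1^2 = X_2^4$ (the defining equation of $\bar X$ restricted to $W = 0$), vanishing of either $X_1$ or $X_2$ forces the other, leaving only the excluded origin. Hence $\overline{H_1(a)} \cap \overline{H_2(a)} \cap X_\infty = \emptyset$, as desired. The main (and essentially the only) step I expect to require care is the verification $\delta(f_i - a_i) = 3$ — without this one cannot identify the $\Phi_i$ above as the correct homogeneous liftings — and this will rely on the explicit minimal presentation $\delta = \max(\delta_1, \delta_2)$ together with the observation that $x_1 x_2$ is the unique summand of $\delta$-value $3$ in each $f_i - a_i$.
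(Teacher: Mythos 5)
Your argument is correct and is essentially the paper's own proof translated into an explicit weighted-projective embedding: your $\Phi_1$, $\Phi_2$ and the relation $X_1^2 - X_2^4 = UW^3$ are precisely the elements $(f_1-a_1)_3$, $(f_2-a_2)_3$ and $((x_1)_2)^2 - ((x_2)_1)^4 = (x_1^2-x_2^4)_1((1)_1)^3$ that the paper manipulates, with points of $\pp^3(\kk;1,1,1,2)$ playing the role of homogeneous prime ideals of $A^{\delta}$. Note only that the two stronger claims you single out --- that the single equation cuts out $\bar X$ exactly, and that $\delta(f_i-a_i)=3$ on the nose --- are not actually needed: the inclusions $\bar X \subseteq V(X_1^2-X_2^4-UW^3)$ and $f_i-a_i \in F_3$ (both immediate from the definition of the filtration) already guarantee that $\Phi_i$ vanishes on $\overline{H_i(a)}$, which is all your final computation uses.
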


\begin{proof}
Let $a := (a_1, a_2) \in \affine[2]{\kk}$. As above, let $\ppp_i$ be the ideal of $H_i(a)$ for $i = 1, 2$. Also define $\ppp^\delta_i$, $\scrI$, $\adelta_+$ as before. It suffices to show that for a homogenous prime ideal $P$ of $\profing$, if $P \supseteq \scrI$, then $P \supseteq \{(x_1)_2, (x_2)_1, (x_1^2-x_2^4)_1\}$ (since these latter elements generate the ideal $\adelta_+$).\\

Let $P \supseteq \scrI$ be a homogenous prime ideal of $\adelta$. Note that $(f_1-a_1)_3 = ((x_1^2 - x_2^4)_1)^2(1)_1 + (x_1)_2(x_2)_1 - a_1((1)_1)^3 \in \scrI$, which implies that $(x_1)_2(x_2)_1 \in \scrI$. Similarly $(f_2-a_2)_3 = ((x_1^2 - x_2^4)_1)^3 + (x_1)_2(x_2)_1 - a_2((1)_1)^3 \in \scrI$, which implies, via the previous inclusion, that $((x_1^2 - x_2^4)_1)^3 \in I \subseteq P$. But then $(x_1^2 - x_2^4)_1 \in P$. Also since $(x_1)_2 (x_2)_1 \in P$, it follows that either $(x_1)_2 \in P$ or $(x_2)_1 \in P$. But now note that
$$((x_1)_2)^2 - ((x_2)_1)^4 = (x_1^2 - x_2^4)_4 = (x_1^2 - x_2^4)_1 ((1)_1)^3.$$
Since the element in the right hand side lies in $P$, it follows that if either of $(x_1)_2$ and $(x_2)_1$ lies in $P$, then the other lies in $P$ as well! Hence we proved that $P \supseteq \{(x_1^2 - x_2^4)_1, (x_1)_2, (x_2)_1, (1)_1\}$.
\end{proof}

\section{Bezout Theorem} \label{sec-bezout}
\subsection{Bezout Theorem for Semidegrees} \label{subsec-bezout-semidegree}
Let $S = \bigoplus_{d \geq 0}S_i$ be any graded algebra over $\kk$. Recall that for any integer $d > 0$, the inclusion of the $d$-th truncated subring $S^{[d]} := \bigoplus_{i \geq 0} S_{id}$ into $S$ induces a map $\proj S \to \proj S^{[d]}$ which is an isomorphism of algebraic varieties. It is called the `$d$-uple embedding' of $\proj S$ \cite[exercise II.5.13]{hart}. Moreover, if $S$ is a finitely generated $\kk$-algebra, then with $d$ being equal to the number of generators of $S$ times a common multiple of degrees of all generators it follows that $S^{[d]}$ is generated by $S_d$. If $S_0 = \kk$, such a $d$-uple embedding gives a closed immersion of $\proj S$ into the projectivization of $S_d$.

\begin{thm}\label{thm-affine-bezout}
Let $X$ be an affine variety of dimension $n$ and $A$ be the coordinate ring of $X$. Let $\delta$ be a \complete semidegree on $A$. Let $(g_1)_{d_1}, \ldots, (g_N)_{d_N}$ be a set of generators of $\adelta$ and $d \geq 1$ be such that the $d$-uple embedding of $\xdelta$ is a closed immersion of $\xdelta$ into the projectivization $\pp_L$ of $F_d$.  Denote by $D$ the degree $\xdelta$ in $\pp_L$. Let $f = (f_1, \ldots, f_n):X \to \ank$ be any quasifinite map. Then for all $a \in \ank$, 
\begin{align}\label{semi-bezout}
|\finv(a)| \leq \frac{D}{d^n}\prod_{i=1}^n\delta(f_i), \tag{A}
\end{align}
where $|\finv(a)|$ is the size of the fiber $\finv(a)$ with all points counted with multiplicity. If in addition $\psi_\delta$ preserves $\finv(a)$ at $\infty$, then \ref{semi-bezout} holds with an equality.
\end{thm}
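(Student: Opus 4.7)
The plan is to apply classical B\'ezout in $\pp_L$ to $X^\delta$ (of degree $D$) and to $n$ hypersurfaces tailored to represent the closures of the fibres of $f$. For each $i$, set $e_i := \delta(f_i) \geq 1$ and define $h_i := ((f_i-a_i)_{e_i})^d \in A^\delta_{de_i} = (A^\delta)^{[d]}_{e_i}$. Since $(A^\delta)^{[d]}_{e_i}$ is precisely the degree-$e_i$ piece of the homogeneous coordinate ring of the $d$-uple embedding $X^\delta \hookrightarrow \pp_L$, $h_i$ cuts out a hypersurface $W_i \subseteq \pp_L$ of degree $e_i$.

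The first step is local: via the isomorphism $A^\delta_{(t)} \cong A$ from Proposition \ref{prop-spec-completion}, $(f_i-a_i)_{e_i}$ dehomogenises on $\spec A \subseteq X^\delta$ to $f_i - a_i$, so scheme-theoretically $V(h_1,\ldots,h_n) \cap \spec A = \spec A/((f_1-a_1)^d,\ldots,(f_n-a_n)^d)$. By quasifiniteness, $(f_1-a_1,\ldots,f_n-a_n)$ is a system of parameters in $\sheaf_{X,P}$ at each $P \in \finv(a)$, and multiplicativity of Samuel multiplicities under raising to $d$-th powers yields that the local intersection multiplicity of $W_1,\ldots,W_n$ on $X^\delta$ at $P$ equals $d^n$ times the local multiplicity of $\finv(a)$ at $P$. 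The points of $\finv(a)$ are isolated in the intersection (as $\finv(a)$ is finite and open in $X^\delta \cap W_1 \cap \cdots \cap W_n$), so the B\'ezout inequality in $\pp_L$ gives
\[
d^n \, |\finv(a)| \;\leq\; \sum_P i(X^\delta, W_1, \ldots, W_n; P) \;\leq\; D \prod_i e_i,
\]
and dividing by $d^n$ proves \ref{semi-bezout}.

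For the equality assertion, I would exploit that $\delta$ being a semidegree makes $X_\infty$ irreducible. By theorem \ref{gengfsgf-characterization}, $((1)_1) \subseteq A^\delta$ is prime, so $\gr A^\delta$ is a domain and $X_\infty = \proj(\gr A^\delta)$ is irreducible. Any irreducible component of the hypersurface $V((f_i-a_i)_{e_i}) \subseteq X^\delta$ contained in $X_\infty$ would have to coincide with $X_\infty$; but this is excluded because $\delta(f_i - a_i) = e_i$ forces $(f_i-a_i)_{e_i} \notin ((1)_1)$. Hence each component of $V((f_i-a_i)_{e_i})$ meets $\spec A$, giving $V((f_i-a_i)_{e_i}) = \overline{H_i(a)}$ as sets, and therefore
\[
V(h_1)\cap\cdots\cap V(h_n) \cap X_\infty \;=\; \overline{H_1(a)}\cap\cdots\cap\overline{H_n(a)}\cap X_\infty \;=\; \emptyset
\]
by the preservation hypothesis. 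Thus the intersection is zero-dimensional and lies entirely in $\spec A$, so classical B\'ezout applies as an equality, giving $d^n |\finv(a)| = D \prod \delta(f_i)$.

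The main technical hurdle is the equality case: the preservation hypothesis is framed in terms of the closures $\overline{H_i(a)}$, whereas B\'ezout is applied to the $V(h_i)$, which could a priori carry extra ``phantom'' components supported in $X_\infty$. The irreducibility of $X_\infty$ ensured by the semidegree property (via theorem \ref{gengfsgf-characterization}) is exactly what rules out such components, allowing the hypothesis on the $\overline{H_i(a)}$'s to transfer to the $V(h_i)$'s.
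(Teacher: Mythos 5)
Your argument is correct and its overall architecture is exactly the paper's: for each $i$ you take the degree-$\delta(f_i)$ hypersurface in $\pp_L$ cut out (on the $d$-uple image of $X^\delta$) by $((f_i-a_i)_{\delta(f_i)})^d$, apply the refined B\'ezout bound $D\prod_i\delta(f_i)$ to the zero-dimensional part of the intersection, and account for the passage from $f_i-a_i$ to $(f_i-a_i)^d$ by the factor $d^n$ in the local multiplicities; this is the paper's proof of \ref{semi-bezout} almost verbatim, at the same level of rigor. The one place you genuinely deviate is the equality case, i.e.\ the verification that the hypersurfaces $V((f_i-a_i)_{\delta(f_i)})\subseteq X^\delta$ carry no components inside $X_\infty$, so that the preservation hypothesis on the $\overline{H_i(a)}$'s applies to them. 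The paper gets this from multiplicativity of $\delta$ directly: for a semidegree the homogenization $\bigoplus_j\bigl(\langle(f_i-a_i)^d\rangle\cap F_j\bigr)$ of the principal ideal $\langle(f_i-a_i)^d\rangle$ is again principal, generated by $((f_i-a_i)^d)_{d\delta(f_i)}$, so the zero set of that single element coincides (ideal-theoretically) with the closure of $V((f_i-a_i)^d)$. You instead invoke theorem \ref{gengfsgf-characterization} to get primality of $\langle(1)_1\rangle$, hence irreducibility of $X_\infty$, and exclude components at infinity by a dimension count (every component of the hypersurface has dimension $n-1$, so one lying in $X_\infty$ would equal $X_\infty$, forcing $(f_i-a_i)_{\delta(f_i)}\in\langle(1)_1\rangle$, contradicting $\delta(f_i-a_i)=\delta(f_i)$); this gives only the set-theoretic identity $V((f_i-a_i)_{\delta(f_i)})=\overline{H_i(a)}$, which is all you need. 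Both are legitimate uses of the semidegree hypothesis: the paper's is shorter and yields the stronger statement that the homogenized ideal itself is principal, while yours is more geometric and makes explicit why ``phantom'' components at infinity are impossible. Two small points you should make explicit: $\delta(f_i)\geq 1$ (since $F_0=\kk$ and no $f_i$ is constant when $f$ is quasifinite and $n\geq 1$), which is what guarantees $\delta(f_i-a_i)=\delta(f_i)$ and $(f_i-a_i)_{\delta(f_i)}\notin\langle(1)_1\rangle$; and the identification of elements of the $d$-th truncation in degree $\delta(f_i)$ with restrictions of degree-$\delta(f_i)$ forms on $\pp_L$ uses that the truncated ring is generated by $F_d$, the same convention on the choice of $d$ that the paper adopts.
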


\begin{proof}
Let $a = (a_1, \ldots, a_n) \in \ank$. For each $i$, let $\ppp_i := \bigoplus_{j \geq 0} (\langle (f_i - a_i )^d\rangle \cap  F_j)$. Then the closure $\overline{V((f_i-a_i)^d)}$ of $V((f_i-a_i)^d)$ in $\xbar := \proj\adelta$ is precisely $V(\ppp_i)$. Since $\tilde Q_i := ((f_i)_{\delta(f_i)})^d \in \ppp_i$, it follows that $\overline{V((f_i-a_i)^d)} \subseteq V(\tilde Q_i)$. Now consider $\xbar$ as embedded into $\pp_L$ via $d$-uple embedding. Then $\tilde Q_i$ is restriction of a degree $\delta(f_i)$ homogenous $\hat Q_i$ in the homogenous coordinate ring of $\pp_L$. Thus $V(\tilde Q_i) = \xbar \cap V(\hat Q_i)$, where the `$V$' in the left hand side denotes zero set in $\xbar$ and the `$V$' in the right hand side denotes zero set in $\pp_L$. But then
\begin{align*}
V((f_1-a_1)^{d}) \cap \ldots \cap  V((f_n-a_n)^{d}) &\subseteq  V(\tilde Q_1) \cap \cdots \cap V(\tilde Q_n) \\
				&= \xbar \cap V(\hat Q_1) \cap \cdots \cap V(\hat Q_n) 
\end{align*}
Inequality \ref{semi-bezout} now follows from following pair of observations:

\begin{itemize}
\item The sum of the multiplicities of zero dimensional components of $\xbar \cap V(\hat Q_1) \cap \cdots \cap V(\hat Q_n)$ is at most $D\delta(f_1) \cdots \delta(f_n)$, and
\item $|V((f_1-a_1)^{d}) \cap \ldots \cap  V((f_n-a_n)^{d})| = d^n|V(f_1-a_1) \cap \ldots \cap  V(f_n-a_n)|$ when counted with multiplicity.
\end{itemize}

If $\delta$ is a semidegree, then $\ppp_i$ is generated by $\tilde Q_i$ and hence $V(\tilde Q_i) = V(\ppp_i) =  \overline{V((f_i-a_i)^d)}$. It follows that $\delta$ preserves $\finv(a)$ at $\infty$ iff
\begin{align*}
V((f_1-a_1)^{d}) \cap \ldots \cap  V((f_n-a_n)^{d}) &= \overline{V((f_1-a_1)^{d})} \cap \ldots \cap \overline{V((f_n-a_n)^{d})} \\
		&= V(\ppp_1) \cap \cdots \cap V(\ppp_n) \\
		&= V(\tilde Q_1) \cap \cdots \cap V(\tilde Q_n) \\
		&= \xbar \cap V(\hat Q_1) \cap \cdots \cap V(\hat Q_n) 
\end{align*}
This proves the theorem.
\end{proof}

\begin{rem}
 In fact, the conclusion of the theorem holds if $\delta = \max_{j=1}^N \delta_j$ is any quasidegree such that $\delta_1(f_i) = \cdots = \delta_N(f_i)$ for all $i=1, \ldots, n$.
\end{rem}

For the following proposition, assume $\kk = \cc$. A {\em valuation} on the field $\cc(X)$ of rational functions on $X$ with value group $\zz^n$ (equipped with an addition preserving total order) is a surjective map $\nu:\cc(X)\setminus\{0\} \to \zz^n$ such that:  
\begin{itemize}
\item $\nu(\lambda f) = \nu(f)$ for all $\lambda \neq 0 \in \cc$,
\item $\nu(f+g) \geq \max\{\nu(f),\nu(g)\}$ for all $f, g \neq 0 \in \cc(X)$,
\item for every pair of elements $f, g \neq 0 \in \cc(X)$ such that $\nu(f) = \nu(g)$, there is a $\lambda \neq 0 \in \cc$ such that $\nu(f-\lambda g) > \nu(f)$, and
\item $\nu(fg) = \nu(f) + \nu(g)$ for all $f, g \neq 0 \in \cc(X)$.
\end{itemize}
Fix a valuation $\nu$ on $\cc(X)$. Let $\delta$ be a \complete degree like function on $A$, and let $d$ and $D$ be as in Theorem \oldref{thm-affine-bezout}. Following \cite{khovanskii-kaveh}, we can associate a convex body $\Delta$ to $\delta$ such that $D$ is $n!$ times the $n$-dimensional volume $V_n(\Delta)$ of $\Delta$, namely: 

\begin{prop}
Let $C$ be the smallest closed cone in $\rr^{n+1}$ containing
$$G := \{(\frac{1}{d}\delta(f),\nu(f)) \in \zz_+^{n+1}: f \in A\}.$$ 
Let $\Delta$ be the convex hull of the cross-section of $C$ at the first coordinate value $1$. Then $D = n!V_n(\Delta)$.
\end{prop}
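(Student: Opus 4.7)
The plan is to invoke the volume formula of Kaveh--Khovanskii (\cite{khovanskii-kaveh}) for Newton--Okounkov bodies of graded algebras, applied to $\adelta$, and then absorb the factor $d$ through a linear rescaling of the cone.

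First I would recast $D$ as an asymptotic Hilbert invariant. The $d$-uple embedding realizes $\xdelta$ inside $\pp_L$ as an $n$-dimensional variety whose homogeneous coordinate ring is $(\adelta)^{[d]}$, with $k$-th graded piece $F_{kd}$. Since $D$ is $n!$ times the leading coefficient of the Hilbert polynomial of this ring,
$$D = n!\lim_{k\to\infty}\frac{\dim F_{kd}}{k^n} = n!\,d^n\lim_{m\to\infty}\frac{\dim F_m}{m^n}.$$

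Next I would attach to $\adelta$ its Okounkov semigroup
$$\tilde S := \{(m,\nu(f)) : f\in F_m\setminus\{0\}\} \subset \zz_{\geq 0}\times\zz^n,$$
and use that $\nu$ has rank $n$ on the transcendence-degree-$n$ field $\cc(X)$, so that elements of $F_m$ with distinct $\nu$-values are linearly independent; consequently the slice $\tilde S\cap(\{m\}\times\zz^n)$ has cardinality $\dim F_m$. Because $\delta$ is a complete semidegree, $\adelta$ is a finitely generated integral domain of Krull dimension $n+1$, so the standard hypotheses of \cite{khovanskii-kaveh} on the semigroup (full-rank lattice in $\zz^{n+1}$ and positive-dimensional slice of the cone) are satisfied. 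The main theorem of \cite{khovanskii-kaveh} then yields
$$\lim_{m\to\infty}\frac{\dim F_m}{m^n} = V_n(\hat\Delta),$$
where $\hat\Delta$ is the cross-section at first coordinate $=1$ of the closed convex cone $\hat C$ generated by $\tilde S$.

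Finally I would identify $\Delta$ with a dilate of $\hat\Delta$. Since $\delta$ is multiplicative, the set $S_1:=\{(\delta(f),\nu(f)):f\in A\setminus\{0\}\}$ is itself a subsemigroup of $\tilde S$, and the decomposition $(m,\nu(f)) = (\delta(f),\nu(f)) + (m-\delta(f),0)$ combined with the fact that $(1,0)$ lies in the closed cone generated by $S_1$ (exhibit some $f\in A$ with $\delta(f)>0$ and $\nu(f)=0$, for instance $f=1+g$ for $g$ with $\delta(g)>0$ and $\nu(g)>0$, both of which are available by completeness of $\delta$ and by the rank-$n$ surjectivity of $\nu$) gives $\overline{\mathrm{cone}}(S_1) = \hat C$. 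Since $G$ is the image of $S_1$ under the linear map $\phi(a,v):=(a/d,v)$, one has $C = \phi(\hat C)$; intersecting with $\{1\}\times\rr^n$ and invoking cone-homogeneity of $\hat C$ yields $\Delta = d\hat\Delta$, and hence $V_n(\Delta) = d^n V_n(\hat\Delta)$. Combining this with the Hilbert asymptotics from the first step completes the proof:
$$D = n!\,d^n V_n(\hat\Delta) = n!\,V_n(\Delta).$$

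The main technical step is the identification $\overline{\mathrm{cone}}(S_1) = \overline{\mathrm{cone}}(\tilde S)$, which requires producing a concrete element in $A$ realizing the direction $(1,0)$; once this is in hand the rest is elementary cone geometry and a direct appeal to \cite{khovanskii-kaveh}.
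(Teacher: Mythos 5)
Your proposal is correct in substance, but it takes a genuinely different route through \cite{khovanskii-kaveh} than the paper does. The paper identifies $D$ with the self-intersection index $[L,\ldots,L]$ of the subspace $L=F_d$ (the degree of the image of the $d$-uple embedding is the number of common zeros of $n$ generic elements of $L$, and the mapping degree of $\phi_L$ is $1$), and then quotes the main theorem of \cite{khovanskii-kaveh} in its intersection-index form, leaving the identification of $\Delta$ with the relevant Newton--Okounkov body implicit. You instead go through the Kushnirenko-type semigroup statement: $D$ is $n!$ times the leading Hilbert coefficient of $(A^{\delta})^{[d]}$ (legitimate, since $d$ is chosen so that this ring is generated by $F_d$), the count $\#\{\nu(f):f\in F_m\setminus\{0\}\}=\dim F_m$ turns Hilbert asymptotics into lattice-point asymptotics for the semigroup $\tilde S$, and an explicit cone computation ($\overline{\mathrm{cone}}(S_1)=\overline{\mathrm{cone}}(\tilde S)$, followed by the rescaling $(a,v)\mapsto(a/d,v)$) identifies the resulting body with $\Delta$. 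What your route buys is exactly the bookkeeping the paper suppresses: it explains why the cone over $G$, which records the exact values $\delta(f)$ rather than the grading levels, has the same closure as the Okounkov cone of $A^{\delta}$, and why the factor $d^n$ cancels.

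Three points to tighten. First, $\#(\tilde S\cap(\{m\}\times\zz^n))=\dim F_m$ needs both inequalities: linear independence of elements with distinct $\nu$-values only gives ``$\le$''; the reverse inequality uses the one-dimensional-leaves axiom (the third condition in the paper's definition of $\nu$), not the rank or surjectivity of $\nu$. Second, to get the limit over all $m$ with no correction factor from \cite{khovanskii-kaveh} you need the group generated by $\tilde S$ to be all of $\zz^{n+1}$, not merely a full-rank subgroup; this does hold here (a fixed $f$ occurring at consecutive levels gives $(1,0)$, and surjectivity of $\nu$ on $\cc(X)\setminus\{0\}$ makes the values $\nu(A\setminus\{0\})$ generate $\zz^n$), so state that rather than ``full rank''. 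Third, your element realizing the direction $(1,0)$ rests on $\nu(1+g)=0$ for some $g\in A$ with $\nu(g)>0$; this is fine under the standing assumption, built into the definition of $G$, that $\nu$ is nonnegative on $A$, and with the usual convention $\nu(f+g)\ge\min\{\nu(f),\nu(g)\}$ (the ``max'' in the paper's list of axioms is evidently a typo). Note also that your claim that $S_1$ is a semigroup uses multiplicativity of $\delta$, i.e.\ the semidegree hypothesis of Theorem \oldref{thm-affine-bezout}, which is the intended setting; once $(1,0)\in\overline{\mathrm{cone}}(S_1)$ is in hand, the remaining cone geometry and the rescaling $V_n(\Delta)=d^nV_n(\hat\Delta)$ are airtight.
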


\begin{proof}
Let $L := F_d = \{f \in A: \delta(f) \leq d\}$. Let $\pp_L$ be the projectivization of $L$, and $\phi_L:\xdelta \into \pp_L$ be the $d$-uple embedding. Degree $D$ of $\phi_L(\xdelta)$ in $\pp_L$ is the number of common zeros of $n$ generic elements of $L$, and it is precisely the intersection index $[L,\ldots, L]$ of $n$ copies of $L$ as defined in \cite{khovanskii-kaveh}.  Since the mapping degree of $\phi_L$ is $1$, the proposition follows from the main theorem of \cite{khovanskii-kaveh}.   
\end{proof}

\begin{example}
Let $X = \cc^n$, and let $\nu$ be the `monomial valuation' on $A=\cc[x_1, \ldots, x_n]$, which assigns to $\sum a_\alpha x^\alpha \in A\setminus \{0\}$ the minimal (lexicographically) exponent $\alpha = (\alpha_1, \ldots, \alpha_n)$ among $\alpha$ with $a_\alpha \neq 0$. Let $\delta$ be a weighted degree on $A$ corresponding to weights $d_i$ for $x_i$, where $d_1, \ldots, d_n$ are positive integers. Then $(1)_1, (x_1)_{d_1}, \ldots, (x_n)_{d_n}$ is a set of generators of $\adelta$. Let $d$ be a sufficiently large multiple of $d_1, \ldots, d_n$. Then $\Delta = \{(1,x) \in \rr_+^{n+1}: \sum_{j=1}^n x_jd_j \leq d\}$ and $V_n(\Delta) = \frac{1}{n!}\prod_{j=1}^n\frac{d}{d_j}$. (Since $\psif$ preserves all fibers of the identity map $\mathbb{I}$ of $\ank$ at $\infty$, formula \ref{semi-bezout} for $\mathbb{I}$ implies $D = \frac{d^n}{d_1\cdots d_n}$ directly.) Hence, the right hand side of \ref{semi-bezout} is $\frac{\prod_j\delta(f_j)}{\prod_jd_j}$ and the result is the well known weighted version of Bezout's theorem (see e.g. \cite{damon}). 
\end{example}

Recall the construction of `iterated' semidegrees from example \ref{iterated-semi-example}. If $X = \ank$ and $\delta$ is a semidegree constructed from a weighted degree by repeating the `iteration' procedure finitely many times, we can do an explicit calculation of the number $D$ appearing in \ref{semi-bezout}.

\begin{thm}\label{iterated-degree-thm}
Let $X = \ank$ and let $\delta$ be a semidegree on $\kk[x_1, \ldots, x_n]$ constructed from a weighted degree $\delta_0$ by repeating the iteration procedure $k$ times. For each $i = 1, \ldots, k$, let $\delta_i$ be the semidegree obtained after $i$-th step, by fixing a polynomial $h_i$ which is prime with respect to $\delta_{i-1}$, and giving it a weight $w_i < \delta_{i-1}(h_i)$. Then $\frac{D}{d^n} = \frac{\delta_0(h_1) \cdots \delta_{k-1}(h_k)}{\delta_0(x_1)\cdots \delta_0(x_n)w_1\cdots w_k}$.
\end{thm}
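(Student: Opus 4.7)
The plan is induction on $k$, reducing the computation of $D/d^n$ to a Hilbert polynomial calculation. Since $\profingg{A}{\delta_k}_d$ is exactly the filtration piece $F_d$, the $d$-uple embedding degree of $X^{\delta_k}$ satisfies $D = D_k\cdot d^n$, where $D_k$ denotes the intrinsic degree --- $n!$ times the leading coefficient of the Hilbert polynomial of $\profinggg{\delta_k}$. In the base case $k=0$, $\profinggg{\delta_0}\cong\kk[X_0,\ldots,X_n]$ graded by weights $(1,\delta_0(x_1),\ldots,\delta_0(x_n))$; a standard lattice-point count in the corresponding weighted simplex gives $D_0 = 1/\prod_i \delta_0(x_i)$, matching the claimed formula with empty numerator. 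It therefore suffices to establish the recursion $D_k = D_{k-1}\cdot \delta_{k-1}(h_k)/w_k$.

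For the inductive step, I would extract from the proof of Theorem \ref{iterated-thm} the graded ring isomorphism
\[
\profinggg{\delta_k} \;\cong\; R[s_k]\big/\big\langle ((1)_1)^{m_k} s_k - (h_k)_{\rho_k}\big\rangle,
\]
where $R := \profinggg{\delta_{k-1}}$, $\rho_k := \delta_{k-1}(h_k)$, $m_k := \rho_k - w_k > 0$, and $s_k$ is a new variable of degree $w_k$. Since $A = \kk[x_1,\ldots,x_n]$ is a domain, Lemma \ref{profing-domain-lemma} makes $R$, and hence $R[s_k]$, an integral domain; the defining relation is nonzero and homogeneous of degree $\rho_k$, hence a nonzerodivisor. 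Multiplication by it therefore yields a short exact sequence of graded modules
\[
0 \to R[s_k](-\rho_k) \to R[s_k] \to \profinggg{\delta_k} \to 0,
\]
and hence the dimension identity $\dim \profinggg{\delta_k}_d = \dim R[s_k]_d - \dim R[s_k]_{d-\rho_k}$.

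To extract the leading term I would use $\dim R[s_k]_d = \sum_{i=0}^{\lfloor d/w_k\rfloor} \dim R_{d-iw_k}$; substituting the inductive estimate $\dim R_e \sim (D_{k-1}/n!)\,e^n$ and approximating the sum by the integral $\int_0^{d/w_k} (D_{k-1}/n!)(d-xw_k)^n\,dx$ gives $\dim R[s_k]_d \sim \frac{D_{k-1}}{(n+1)!\,w_k}\,d^{n+1}$. Subtracting the shift by $\rho_k$ and isolating the $d^n$ coefficient yields $\dim \profinggg{\delta_k}_d \sim \frac{D_{k-1}\,\rho_k}{n!\,w_k}\,d^n$, whence $D_k = D_{k-1}\cdot \delta_{k-1}(h_k)/w_k$, completing the induction.

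The main obstacle is essentially bookkeeping: with $s_k$ of weight $w_k > 1$, the Hilbert function of $\profinggg{\delta_k}$ is only quasi-polynomial, so to apply ``degree $= n!\cdot(\text{leading coefficient})$'' rigorously one should either pass to a sufficiently divisible Veronese subring $(\profinggg{\delta_k})^{[e]}$ (whose Hilbert function is a genuine polynomial and whose intrinsic degree is $D_k\cdot e^n$, compatibly with the $d^n$ factor in the theorem), or observe directly that the eventual periodic correction is of strictly lower order than the $d^n$ term. Apart from this, all the ingredients --- the ring-theoretic description from Example \ref{iterated-semi-example}, the nonzerodivisor observation, and the elementary asymptotic count --- come essentially for free from the earlier material.
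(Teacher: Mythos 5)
Your argument is correct in outline but takes a genuinely different route from the paper. The paper unwinds the iteration all at once: using the proof of Theorem \ref{iterated-thm} it writes $A^{\delta_k} \cong \kk[x_0,\ldots,x_n,s_1,\ldots,s_k]/\langle \tilde h_j - x_0^{e_j-w_j}s_j\rangle_{j=1}^{k}$, so that $X^{\delta_k}$ is a graded complete intersection in $\pp^{n+k}(\kk;1,d_1,\ldots,d_n,w_1,\ldots,w_k)$, and then counts a generic fibre in two ways --- by the classical weighted-homogeneous Bezout theorem in that ambient space and by Theorem \ref{thm-affine-bezout} --- and solves for $D/d^n$. You instead peel off one iteration at a time and compute the multiplicity (leading Hilbert coefficient) from the short exact sequence attached to the principal relation $((1)_1)^{\rho_k-w_k}s_k-(h_k)_{\rho_k}$. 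Your route is more self-contained (no appeal to Theorem \ref{thm-affine-bezout}, no genericity argument), at the price of having to control Hilbert-function asymptotics; the paper avoids those entirely.

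The soft spot is the step ``subtracting the shift by $\rho_k$ and isolating the $d^n$ coefficient''. An asymptotic $\dim R[s_k]_d \sim \frac{D_{k-1}}{(n+1)!\,w_k}d^{n+1}$ is not stable under differencing: the uncontrolled error is $O(d^n)$, exactly the order of the main term of $\dim R[s_k]_d - \dim R[s_k]_{d-\rho_k}$, and the Hilbert functions here are honest quasi-polynomials whose degree-$n$ coefficients can be periodic; since $d$ and $d-\rho_k$ lie in different congruence classes, neither of your suggested fixes (Veronese passage, or ``the periodic correction is of lower order'') by itself licenses the subtraction. The standard repair keeps your exact sequence but works with Hilbert series: it gives $H_{A^{\delta_k}}(t) = (1-t^{\rho_k})H_{R[s_k]}(t) = \frac{1-t^{\rho_k}}{1-t^{w_k}}H_{A^{\delta_{k-1}}}(t)$, so the multiplicity $\lim_{t\to 1}(1-t)^{n+1}H(t)$ is multiplied by exactly $\rho_k/w_k$ at each step; alternatively, telescope, $\sum_{e\le d}\dim A^{\delta_k}_e=\sum_{j=0}^{\rho_k-1}\dim R[s_k]_{d-j}$, which only needs the leading asymptotics you do have. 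One must also know that this multiplicity equals $D/d^n$, i.e.\ that the Hilbert quasi-polynomial of $A^{\delta_k}$ has constant leading coefficient across congruence classes; this follows because $(1)_1$ is a degree-one nonzerodivisor, so the Hilbert function is eventually nondecreasing --- the same domain observation you already invoke. With these adjustments your induction closes; the base case and the identification $D = D_k d^n$ via the $d$-uple embedding are fine as stated.
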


\sloppy

\begin{proof}
From the proof of theorem \ref{iterated-thm}, it follows that $A^{\delta_i} = (A^{\delta_{i-1}}[s_i])^{\delta^e_{i-1}}/\langle (s_i - h_i)_{\delta_{i-1}(h_i)} \rangle$, where $s_i$ is an indeterminate and $\delta^e_{i-1}$ is induced by the extension $\delta_{i-1}^e$ of $\delta$ to $A^{\delta_{i-1}}[s_i]$, given by: $\delta_{i-1}^e(s_i) := w_i$. But then it follows by induction that for all $i \geq 1$,
$$A^{\delta_i} = \kk[x_0, \ldots, x_n, s_1, \ldots, s_i]/J_i,$$
where $J_i := \langle \tilde h_1 - x_0^{e_1-w_1}s_1, \ldots, \tilde h_i - x_0^{e_i-w_i}s_i \rangle,$ and for each $j$, $1 \leq j \leq i$, $e_j := \delta_{j-1}(h_j)$ and $\tilde h_j$ is a weighted homogenous polynomial in $x, s_1, \ldots, s_j$ which is a representative of $(h_j)_{e_j}$.  Hence $X^{\delta_k}$ is a complete intersection in $\pp^{n+k}(\kk;1,d_1,\ldots, d_n, w_1, \ldots, w_k)$, where $d_j := \delta_0(x_j)$ for $j = 1, \ldots, n$.  \\

Let $\tilde\delta$ be the weighted degree on $R_k := \kk[x_0, \ldots, x_n, s_1, \ldots, s_k]$ corresponding to weight $1$ for $x_0$, $d_j$ for $x_j$, $1\leq j \leq n$, and $w_j$ for $s_j$, $1 \leq j \leq k$. For $f_1, \ldots, f_n \in \kk[x_1, \ldots, x_n]$, let $\tilde f_1, \ldots, \tilde f_n \in R_k$ such that for each $i$, $\tilde f_i$ maps to $f_i$ under the above surjection and $\tilde\delta(\tilde f_i) = \delta(f_i)$. Now, for generic $f_1, \ldots, f_n$,
\begin{align*}
V(f_1, \ldots, f_n) &= \xbar \cap V(\tilde f_1) \cap \cdots \cap V(\tilde f_n) \\
			& =  V(\tilde h_1-x_0^{e_1-w_1}s_1) \cap \cdots \cap V(\tilde h_k - x_0^{e_k-w_k}s_k)  \cap V(\tilde f_1) \cap \cdots \cap V(\tilde f_n).
\end{align*}
By weighted homogenous Bezout theorem the right hand side equals $\frac{\delta(f_1) \cdots \delta(f_n)e_1\cdots e_k}{d_1\cdots d_n w_1\cdots w_k}$, and by the Bezout theorem for semidegrees, the left hand side is: $\frac{D}{d^n}\delta(f_1)\cdots \delta(f_n)$. 
\end{proof}

\fussy

\begin{example}
Let $\delta$ be the iterated semidegree on $\kk[x_1,x_2]$ from example \ref{iterated-semi-concrete-example}, so that $\delta(x_1) = 3$, $\delta(x_2) = 2$ and $\delta(x_1^2-x_2^3) = 1$. It follows by theorem \ref{iterated-degree-thm} that $D/d^n = \frac{6}{3\cdot 2 \cdot 1} = 1$.
\smallskip

It is easy to see that for any $k > 0$, $\psi_\delta$ preserves all fibers of the map $f_k := (x_1 + (x_1^2 - x_2^3)^2, (x_1^2 - x_2^3)^k):\affine[2]{\kk} \to \affine[2]{\kk}$. Hence, it follows from \ref{semi-bezout} that for any $k > 0$, the size of every fiber of $f_k$ is $3k$. Note that it is much smaller than the estimate $12k$ predicted by the weighted homogeneous version of Bezout's theorem.
\end{example}

\setcounter{section}{0}
\def\thesubsection{\thesection\arabic{subsection}}\ignorespaces

\def\thesection{Appendix}\ignorespaces
\section{}
\def\thesection{\Alph{section}}\ignorespaces
\begin{thm*}[Bernstein \cite{bern}]
For each $i = 1, \ldots, n$, let $A_i$ be a finite subset of $\zz^n$ and $f_i$ be a Laurent polynomial in $\cc[z_1, z_1^{-1}, \ldots, z_n,z_n^{-1}]$ such that $\supp(f_i) \subseteq A_i$. Then
\begin{align}\label{bernstein}
|V(f_1, \ldots, f_n)| \leq \scrM(A_1, \ldots, A_n) \tag{$*$}
\end{align}
where $|V(f_1, \ldots, f_n)|$ is the number of common isolated roots  of $f_1, \ldots, f_n$ in $(\cc^*)^n$ counted with multiplicity, and $ \scrM(A_1, \ldots, A_n)$ is the {\em mixed volume} of $A_1, \ldots, A_n$. Let $f_i = \sum_{\beta \in A_i} a_{i,\beta} x^\beta$, and for each $\alpha \in (\zz^n)^*$, let $A_{i,\alpha} := \{\beta \in A_i: \langle \alpha, \beta \rangle \leq \langle \alpha, \gamma \rangle\ \text{for all}\ \gamma \in A_i\}$ and $f_{i,\alpha} := \sum_{\beta \in A_{i,\alpha}} a_{i,\beta} x^\beta$. Inequality \mathref{bernstein} holds with an equality iff
\begin{align}\label{exactstein}
%\begin{tabular*}{0.75\textwidth}{p{0.75\textwidth}}
\text{for all }\ \alpha \in \zz^n,\ V(f_{1,\alpha}, \ldots, f_{n,\alpha}) = \emptyset. \tag{$**$}
%\end{tabular*}
\end{align}
\end{thm*}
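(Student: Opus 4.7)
The inequality in \ref{bernstein} is Bernstein's original theorem from \cite{bern}, which I would take as given; my task is to characterize equality by condition \ref{exactstein}. The plan is to translate \ref{exactstein} into the intersection-at-infinity condition of the paper applied to the toric completion of $\ntorus$ associated to the Minkowski sum of the Newton polytopes of the $f_i$, and then to invoke a standard toric-intersection count.

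First I would normalize the data. Set $\scrP_i := \conv(A_i)$ and $\scrP := \scrP_1 + \cdots + \scrP_n$. Multiplying each $f_i$ by a Laurent monomial is a unit operation in $A := \cc[z_1^{\pm 1}, \ldots, z_n^{\pm 1}]$; it changes neither $V(f_1, \ldots, f_n) \subseteq \ntorus$ nor the mixed volume $\scrM(A_1, \ldots, A_n)$, nor the initial forms $f_{i,\alpha}$ up to monomial factors. By such operations I may arrange that $\scrP$ contains the origin in its interior. Example \ref{toric-example} then furnishes a complete quasidegree $\delta$ on $A$ whose corresponding completion is isomorphic to the classical toric completion $X_\scrP$ of $\ntorus$.

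The central step is the equivalence
\begin{align*}
\bar{V}(f_1) \cap \cdots \cap \bar{V}(f_n) \cap (X_\scrP \setminus \ntorus) \;=\; \emptyset \quad \Longleftrightarrow \quad \ref{exactstein},
\end{align*}
where $\bar{V}(f_i)$ is the closure of $V(f_i)$ in $X_\scrP$. I would prove this via the toric stratification: $X_\scrP \setminus \ntorus$ is the disjoint union of torus orbits $O_\scrQ$ indexed by proper faces $\scrQ$ of $\scrP$. In the affine toric chart of $X_\scrP$ at a vertex of the dual cone of $\scrQ$, the monomials $z^\beta$ for $\beta \in A_i$ all factor as a common monomial times chart coordinates, and reducing modulo the ideal cutting out $O_\scrQ$ annihilates every term of $f_i$ except those indexed by $A_{i,\alpha}$, where $\alpha$ is any lattice covector in the relative interior of the cone normal to $\scrQ$. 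Hence $\bar{V}(f_i) \cap O_\scrQ$ is the zero locus of $f_{i,\alpha}$ inside $O_\scrQ$, and the equivalence follows since every non-zero $\alpha \in \zz^n$ lies in the relative interior of the normal cone of a unique face of $\scrP$, so that as $\scrQ$ ranges over proper faces one exhausts the non-trivial cases of \ref{exactstein}.

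Once the equivalence is in hand, equality in \ref{bernstein} follows from a standard toric-intersection calculation on the normal projective variety $X_\scrP$: the total intersection number $[\bar V(f_1)] \cdots [\bar V(f_n)]$ equals $\scrM(A_1, \ldots, A_n)$, and when the scheme-theoretic intersection is contained in $\ntorus$ every point of it (with multiplicity) is a point of $V(f_1, \ldots, f_n)$; when instead some intersection lies in $X_\scrP \setminus \ntorus$, the affine count is strictly smaller, forcing strict inequality in \ref{bernstein}. The main obstacle is the toric-chart bookkeeping inside the central equivalence: one must verify, uniformly across every face $\scrQ$, that the defining ideal of $O_\scrQ$ in the appropriate affine chart is generated by precisely those monomials whose presence annihilates the summands of $f_i$ indexed by $A_i \setminus A_{i,\alpha}$. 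This requires an explicit choice of basis along the dual cone, which I would carry out vertex by vertex using the description of the affine charts of $X_\scrP$ in terms of the semigroup $\scrC \cap \zz^{n+1}$ appearing in example \ref{toric-example}.
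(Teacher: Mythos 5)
You are attempting more than the paper itself does here: the paper takes both the inequality $(*)$ and the equality criterion $(**)$ as Bernstein's result from \cite{bern}, and what it actually proves in the appendix is only the Claim that $(**)$ is equivalent to the geometric condition that $\ntorus \into X_\scrP$ preserves $f^{-1}(0)$ at $\infty$. Your central step is that same equivalence, but by a different method: you argue orbit by orbit in the affine charts of $X_\scrP$, whereas the paper works in the monomial embedding of $X_\scrP$ into $\pp^N$ and, for the hard direction, constructs for each $i$ an explicit Puiseux-parametrized curve inside $V(f_i)$ whose limit is a common boundary point. You then go further and try to deduce the equality criterion itself from the geometric condition by an intersection count on $X_\scrP$, which the paper never does.

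The genuine gap is in your asserted identity $\bar{V}(f_i)\cap O_\scrQ=\{f_{i,\alpha}=0\}\cap O_\scrQ$. The chart computation you describe (reducing $z^{-\beta_0}f_i$ modulo the ideal of the orbit kills the non-facial terms) proves only the inclusion $\bar{V}(f_i)\cap O_\scrQ\subseteq\{f_{i,\alpha}=0\}$, which is the direction needed for ``$(**)$ implies no points at infinity''. The reverse inclusion --- that a common torus zero of the facial forms yields a point of $O_\scrQ$ lying in \emph{every} closure $\bar{V}(f_i)$, which is what forces an intersection at infinity when $(**)$ fails --- does not follow from restricting a regular function: the closure of $V(f_i)$ in the chart can be strictly smaller than the zero set of the extended function. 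This is exactly where the paper spends its effort (the curve germ through the origin of $V(\hat f_i)$ with prescribed leading exponents $\alpha$), and you must supply some such limiting argument, or show that no component of $V(z^{-\beta_0}f_i)$ lies in the toric boundary, which requires that every face $A_{i,\alpha}$ meet $\supp f_i$. Indeed, as stated your identity is false when $\supp f_i$ misses $A_{i,\alpha}$ (a case your hypotheses allow, since only $\supp f_i\subseteq A_i$ is assumed): then $f_{i,\alpha}\equiv 0$ while $\bar{V}(f_i)\cap O_\scrQ$ is in general a proper subset of $O_\scrQ$. Your final step is also not routine: the cycle $[\bar{V}(f_i)]$ on $X_\scrP$ corresponds to $\conv(\supp f_i)$ rather than $\conv(A_i)$, so identifying the total intersection number with $\scrM(A_1,\ldots,A_n)$ again needs this non-degeneracy; and when the intersection at infinity is positive-dimensional, the claim that the affine count drops strictly does not follow formally, since the relevant divisor classes are nef but not ample and excess components can contribute zero. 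So the outline follows the standard toric proof, but the two steps asserted without argument are precisely where the real work lies.
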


Let $f_i$'s and $A_i$'s be as in Bernstein's theorem. Let $\scrP$ be the convex hull of  $A_1+ \cdots + A_n$. We only consider the case that $\dim\scrP =n$. Let $\phi_\scrP: \ntorus \into X_\scrP$ be the toric compactification of $\ntorus$ corresponding to $\scrP$.

\begin{claim*}
$f$ satisfies \mathref{exactstein} iff $\phi_\scrP$ preserves $f^{-1}(0)$ at $\infty$, where $f:=(f_1, \ldots, f_n): \ntorus \to \ntorus$.
\end{claim*}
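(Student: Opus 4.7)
The plan is to use the torus-orbit stratification of the toric compactification $X_\scrP$ and reduce the question to identifying, on each boundary stratum, the closures $\bar{H}_i$ with the zero loci of the initial forms $f_{i,\alpha}$. Let $\Sigma$ denote the inner normal fan of $\scrP$, so that $X_\scrP = \bigsqcup_{\sigma \in \Sigma} O_\sigma$ with $O_{\{0\}} = \ntorus$ and $X_\scrP \setminus \ntorus = \bigsqcup_{\sigma \neq \{0\}} O_\sigma$. For each nonzero $\alpha \in \zz^n$, the face $A_{i,\alpha}$ of $A_i$ (and hence $f_{i,\alpha}$) depends only on the cone $\sigma$ of $\Sigma$ whose relative interior contains $\alpha$; thus \mathref{exactstein}, restricted to nonzero $\alpha$ (the case $\alpha=0$ just says $\bigcap_i V(f_i) = \emptyset$ in $\ntorus$, for which the claim is vacuous), is equivalent to the assertion that $\bigcap_{i=1}^n V(f_{i,\alpha}) = \emptyset$ in $\ntorus$ for every nonzero $\sigma \in \Sigma$ and some (equivalently every) $\alpha \in \mathrm{relint}(\sigma) \cap \zz^n$.

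First I would establish, for each such $\sigma$ and $\alpha$, the identity
\[
\bar{H}_1 \cap \cdots \cap \bar{H}_n \cap O_\sigma \;=\; \psi_\sigma\!\left(V(f_{1,\alpha}) \cap \cdots \cap V(f_{n,\alpha})\right),
\]
where $\psi_\sigma : \ntorus \onto O_\sigma$ is the surjection $x \mapsto \lim_{t \to 0} \lambda_\alpha(t) \cdot x$ defined by the one-parameter subgroup $\lambda_\alpha(t) := (t^{\alpha_1}, \ldots, t^{\alpha_n})$. The core computation is
\[
f_i(\lambda_\alpha(t) x) \;=\; \sum_{\beta \in A_i} a_{i,\beta}\, t^{\langle \alpha, \beta\rangle} x^\beta \;=\; t^{m_i}\bigl(f_{i,\alpha}(x) + O(t)\bigr), \quad m_i := \min_{\beta \in A_i}\langle \alpha, \beta\rangle,
\]
which shows that, after choosing $\gamma_i \in A_{i,\alpha}$ and passing to a suitable affine chart of $X_\scrP$ containing $O_\sigma$, the rational function $x^{-\gamma_i} f_i$ is regular near the generic point of $O_\sigma$ and restricts on $O_\sigma$ to $x^{-\gamma_i} f_{i,\alpha}$ (pulled back through $\psi_\sigma$). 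Since $A_{i,\alpha} - \gamma_i \subseteq \sigma^\perp$, this restriction is invariant under the subtorus $T_\sigma := \ker \psi_\sigma$ (whose character lattice is $\zz^n/(\sigma^\perp \cap \zz^n)$); therefore $V(f_{i,\alpha})$ is a union of $T_\sigma$-orbits, and the displayed identity follows from the surjectivity of $\psi_\sigma$ together with $\psi_\sigma^{-1}\psi_\sigma(V(f_{i,\alpha})) = V(f_{i,\alpha})$.

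Ranging over all nonzero $\sigma \in \Sigma$ then yields the claim: $\phi_\scrP$ preserves $f^{-1}(0)$ at infinity iff \mathref{exactstein} holds. The main obstacle is rigorously justifying the local computation, namely that $\bar{H}_i \cap O_\sigma$ is cut out on $O_\sigma$ by $x^{-\gamma_i} f_{i,\alpha}$. I would handle this by fixing a maximal cone $\tau \supseteq \sigma$ of $\Sigma$ and working on the affine chart $U_\tau = \spec \kk[\tau^\vee \cap \zz^n]$: there $\bar{H}_i \cap U_\tau$ is the vanishing locus of $x^{-\gamma_i} f_i$ for any vertex $\gamma_i$ of the Newton polytope of $f_i$ that minimizes $\langle \alpha', \cdot\rangle$ over $\alpha' \in \mathrm{relint}(\tau)$; the ideal defining $O_\sigma \cap U_\tau$ in $U_\tau$ is generated by the characters $x^\delta$ with $\delta \in (\tau^\vee \cap \zz^n) \setminus \sigma^\perp$; and reducing $x^{-\gamma_i} f_i$ modulo this ideal keeps exactly the monomials with exponent in $A_{i,\alpha} - \gamma_i$, yielding the desired $x^{-\gamma_i} f_{i,\alpha}$.
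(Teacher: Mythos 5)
Your argument is correct, but it takes a genuinely different route from the paper's. The paper proves both implications with curve germs: from a common boundary point it extracts Puiseux expansions of curves inside each $V(f_i)$, reads off leading exponents $\tilde\gamma^i$ and leading coefficients, and then has to reconcile the a priori different $\tilde\gamma^i$ via uniqueness of the Minkowski decomposition of a face; conversely, from a common zero of the initial forms it builds analytic curves $t^{\alpha}(a+o(1))$ inside each $V(f_i)$ by curve selection on an auxiliary hypersurface. You instead stratify $X_\scrP\setminus\ntorus$ by torus orbits $O_\sigma$ of the normal fan and prove the finer orbit-by-orbit identity $\bar{H}_1\cap\cdots\cap\bar{H}_n\cap O_\sigma=\psi_\sigma\bigl(V(f_{1,\alpha})\cap\cdots\cap V(f_{n,\alpha})\bigr)$, from which the claim falls out for all nonzero $\alpha$ at once. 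The one step you assert rather than prove is the chart identity $\bar{H}_i\cap U_\tau=V(x^{-\gamma_i}f_i)$, and it is exactly here that your choice of $\gamma_i$ matters: since $\gamma_i$ is a vertex of the Newton polytope of $f_i$ minimizing $\langle\alpha',\cdot\rangle$ on all of $\tau$, for every ray $\rho$ of $\tau$ the restriction of $x^{-\gamma_i}f_i$ to the divisor $D_\rho$ keeps the nonzero constant term $a_{i,\gamma_i}$ (all other surviving exponents are distinct characters of $O_\rho$), so $x^{-\gamma_i}f_i$ vanishes identically on no boundary divisor; hence no irreducible component of $V(x^{-\gamma_i}f_i)$ lies in $U_\tau\setminus\ntorus$, and $V(x^{-\gamma_i}f_i)$ is indeed the closure of $V(f_i)$ in $U_\tau$. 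With that sentence added your proof is complete. One caveat, shared with the paper: identifying the reduction modulo the orbit ideal with $f_{i,\alpha}$ (which the theorem defines via $A_i$) uses $\gamma_i\in A_{i,\alpha}$; this is automatic when $\supp(f_i)=A_i$, while if $\supp(f_i)\subsetneq A_i$ some $f_{i,\alpha}$ may vanish identically and then the claim itself (and the paper's own converse construction) needs the same proviso. As for what each approach buys: yours gives an exact description of the intersection on every boundary orbit, uses a single $\alpha$ serving all $i$ simultaneously, and is purely algebraic, hence valid over any algebraically closed field; the paper's argument avoids the orbit--cone correspondence entirely, working only with the explicit monomial embedding of $X_\scrP$ into $\pp^N$ at the price of Puiseux expansions and an analytic curve-selection step.
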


\begin{proof}
Note that the condition \ref{exactstein} is not affected by
\begin{enumerate}[(i)]
\item translation of the $A_i$'s by elements in $\zz^n$, and
\item simultaneous transformation of $A_i$'s by an isomorphism of $\zz^n$.
\end{enumerate}
The above two transformations change the original $\scrP$ to a new polytope $\scrQ$ and $f_i$'s to new Laurent polynomial $g_i$'s. But $\scrP$ is isomorphic to $\scrQ$ via an isomorphism of $\zz^n$ (modulo a translation), which implies that $X_\scrP \cong X_\scrQ$ as toric varieties, i.e. in particular the $n$-torus in $X_\scrP$ corresponds to the $n$-torus in $X_\scrQ$. This implies that the property of preserving $f^{-1}(0)$ at $\infty$ is also invariant under the transformation $\scrP \to \scrQ$. Thus it will be safe to apply transformations of type (i) and (ii) in the proof.\\

At first assume $\phi_\scrP$ does not preserve $f^{-1}(0)$ at $\infty$. Let $\tilde x \in \overline{V(f_1)} \cap \cdots \cap \overline{V(f_n)} \cap X_\infty$. Recall that $X_\infty = \cup X_F$, where the union is over {\em faces} $F$ of $P := \scrP \cap \zz^n$ (i.e. $F = \scrF \cap \zz^n$, where $\scrF$ is a face of $\scrP$). Let $n\scrP \cap \zz^n := \{\alpha^0, \ldots, \alpha^N\}$. Then $X_\scrP$ is isomorphic to the closure of the image of $\phi_\scrP: \ntorus \to \pp^N$ which maps $z \mapsto [z^{\alpha^0}: \cdots: z^{\alpha^N}]$. Under this isomorphism, for a face $F := \scrF \cap \zz^n$ of $P$, $X_F$ is the closure of the image of the map $z \to [\phi_{\scrF,0}(z): \cdots: \phi_{\scrF,N}(z)]$, where 
\begin{align*}
\phi_{\scrF,j}(z) = \left\{
											\begin{array}{cl}
											z^{\alpha^j} & \text{if}\ \alpha^j \in n\scrF,\\
											0 & \text{otherwise}. 
											\end{array}
										\right.
\end{align*}
Let $F$ be the {\em smallest} face of $P$ such that $\tilde x \in X_F$. After an isomorphism of $\zz^n$, we may assume
\begin{enumerate}
\item $\alpha^0 = 0$ and $\alpha^i = e_i$ for $i = 1, \ldots, n$, where $e_i$ is the $i$-th unit vector of $\zz^n$,
\item $0 \in F \subseteq E_d := \zz\langle e_1, \ldots, e_d\rangle$, where $d = \dim(\conv F)$.
\end{enumerate}
Identify $X_\scrP$ with the closure of the image of $\phi_\scrP$ in $\pp^N$ as above. Let the homogenous coordinates of $\pp^N$ be $[x_0:\cdots:x_N]$. By assumption on $F$, $\tilde x_j \neq 0$ iff $\alpha_j \in \conv (nF)$, so in particular $\tilde x_0 \neq 0$. Fix an $i \in \{1, \ldots, n\}$. Since $\tilde x \in \overline{V(f_i)}$, there is a curve $C \subseteq V(f_i)$ such that $\tilde x \in \overline{C}$. Let the Puiseux expansion of $\overline{C}$ at $\tilde x$ be:
%$$x_j/x_0 = \tilde x_j/\tilde x_0 + \sum a_{j,k} t^{\gamma_{j,k}},$$
$$x_j/x_0 = \sum_{k \geq 0} b_{j,k} t^{\gamma_{j,k}},$$
where $\{\gamma_{j,k}\}_k$ is an increasing sequence of non-negative rational numbers. Since $\tilde x_j \neq 0$ for $j \leq d$, it follows that
\begin{align}
\gamma_{j,0} = 0\ \text{and}\ b_{j,0} = \tilde x_j/\tilde x_0\ \text{for}\ 1 \leq j \leq d. \label{project-b}
\end{align}

For all $t$ in a punctured disc centered at the origin in $\cc$, $C(t) \in \ntorus$. By our choice of coordinates, the preimage of $C(t)$ in $\ntorus$ is precisely $(x_1/x_0, \ldots, x_n/x_0)$. Let $\tilde \gamma := (\gamma_{1,0}, \ldots, \gamma_{n,0})$ and $\tilde b := (b_{1,0}, \ldots, b_{n,0}) \in \ntorus$. Then
$$\phi_\scrP(C(t)) = [1: {\tilde b}^{\alpha^1}t^{\langle \tilde \gamma, \alpha^1 \rangle} +\ \text{h.o.t.}:\  \cdots\ : {\tilde b}^{\alpha^N}t^{\langle \tilde \gamma, \alpha^N \rangle}+\  \text{h.o.t.}],$$
where {\em h.o.t.} abbreviates `higher order terms' (in $t$). Since $\lim_{t \to 0} \phi_\scrP(C(t)) = \tilde x$, and since $\tilde x_j \neq 0$ iff $\alpha^j \in \conv (nF)$, it follows that $\langle \tilde \gamma, \alpha^j \rangle > 0$ if $\alpha^j \not\in \conv(nF)$, and $\langle \tilde \gamma, \alpha^j \rangle = 0$ if $\alpha^j \in \conv(nF)$. But then $F$ is the `face' of $P$ corresponding to $\tilde\gamma$. This implies that $F = \conv(A_{1,\tilde\gamma} + \cdots + A_{n,\tilde\gamma}) \cap \zz^n$, where for each $j$, $A_{j,\tilde\gamma}$ is the `face' of $A_j$ corresponding to $\tilde \gamma$ as defined in the statement of Bernstein's theorem. Then,
\begin{align*}
f_i(C(t)) &= f_i(\sum_{k \geq 0} b_{1,k} t^{\gamma_{1,k}}, \ldots,  \sum_{k \geq 0} b_{n,k} t^{\gamma_{n,k}}) \\
	&= \sum_\beta a_{i,\beta}\prod_{j=1}^n (\sum_{k \geq 0} b_{j,k} t^{\gamma_{j,k}})^{\beta_j} \\
	&= \sum_\beta a_{i,\beta}({\tilde b}^\beta t^{\langle \tilde \gamma, \beta \rangle}+\ \text{h.o.t.}) \\
	&= t^{d_i}(\sum_{\beta \in A_{i, \tilde\gamma}} a_{i,\beta}{\tilde b}^\beta) +\ \text{h.o.t.},
\end{align*}
for some $d_i \in \qq_+$. Since $f_i(C(t))$ is identically zero, it follows that $f_{i, \tilde \gamma}(\tilde b) = \sum_{\beta \in A_{i, \tilde\gamma}} a_{i,\beta}{\tilde b}^\beta = 0$. Moreover, since $F \subseteq E_d$, there exists $\tilde\beta \in \zz^n$ such that $A_{i,\tilde\gamma} \subseteq \tilde\beta + E_d$. It follows that $\tilde f_i := x^{-\tilde\beta}f_{i,\tilde\gamma}$ depends only on $x_1, \ldots, x_d$, and for each $z \in \ntorus$, $f_{i, \tilde \gamma}(z) = z^{\tilde\beta}\tilde f_i(\pi(z))$, where $\pi(z)$ is the projection of $z$ to the first $d$ coordinates. Hence $f_{i, \tilde \gamma}(\tilde b) = {\tilde b}^{\tilde\beta} \tilde f_i(\pi (\tilde b)) = {\tilde b}^{\tilde\beta} \tilde f_i(\tilde x_1/\tilde x_0, \ldots, \tilde x_d/\tilde x_0)$ (by \ref{project-b}). It follows that $\tilde f_i(\tilde x_1/\tilde x_0, \ldots, \tilde x_d/\tilde x_0) = 0$.\\

Repeating the above arguments with every $i$, $1 \leq i \leq n$, we construct $\tilde\gamma^1, \ldots, \tilde\gamma^n \in \zz_+^n$, $\tilde\beta^1, \ldots, \tilde\beta^n \in \zz^n$ and $\tilde b^1, \ldots, \tilde b^n \in \ntorus$ such that for each $i$,
\begin{enumerate}[(i)]
\item $F = \conv(A_{1,\tilde\gamma^i} + \cdots + A_{n,\tilde\gamma^i}) \cap \zz^n$, so that $F$ is the `face' of $P$ corresponding to $\tilde\gamma^i$ and $A_{j,\tilde\gamma^i}$ is the `face' of $A_j$ corresponding to $\tilde\gamma^i$ for each $j$, 
\item $\pi(\tilde b^i) = (\tilde x_1/\tilde x_0, \ldots, \tilde x_d/\tilde x_0)$, and
\item $\tilde f_i := x^{-\tilde\beta^i}f_{i,\tilde\gamma^i}$ depends only on $x_1, \ldots, x_d$.
\end{enumerate}

%\sloppy

Since a decomposition of a face of a Minkowski sum of a polytope as a Minkowski sum of faces of summands is unique, it follows that for each $i,j,k$, $A_{i,\tilde\gamma^j} =  A_{i,\tilde\gamma^k}$. Let $\gamma := \tilde\gamma^1$ and $b := \tilde b^1$. Then for each $i$, $f_{i,\gamma}(b) = \sum_{\beta \in A_{i,\gamma}} a_{i,\beta}b^\beta = \sum_{\beta \in A_{i,\tilde\gamma^i}} a_{i,\beta}b^\beta
= f_{i, \tilde \gamma^i}(b) = b^{\tilde\beta^i}\tilde f_i(\pi(b)) = b^{\tilde\beta^i}\tilde f_i(\tilde x_1/\tilde x_0, \ldots, \tilde x_d/\tilde x_0) = 0$. Thus \ref{exactstein} is not satisfied!\\

%\fussy

Now assume \ref{exactstein} is not satisfied. Let $a := (a_1, \ldots, a_n) \in V(f_{1,\alpha}, \ldots, f_{n,\alpha})$. Fix an $i$, $1 \leq i \leq n$. We claim that there is a curve $C_i \subseteq V(f_i)$ with parametrization $C_i(t) = (a_1t^{\alpha_1} +\ \text{h.o.t.}\:, \ldots, a_nt^{\alpha_n} +\ \text{h.o.t.})$. To see it, let 
$$\hat f_i(t, w_1, \ldots, w_n) := t^{-d_i}f_i(t^{\alpha_1}(a_1+w_1), \ldots, t^{\alpha_n}(a_n + w_n)),$$
where $t, w_1, \ldots, w_n$ are indeterminates and $d_i := \langle \alpha, \beta \rangle$ for any $\beta \in A_{i, \alpha}$. Then $\hat f_i$ is a polynomial in $t, w_1, \ldots, w_n$, with {\em zero} constant term, and hence the origin $O$ belongs to the hypersurface $V(\hat f_i)$ of $\cc^{n+1}$. Since every point of a complex hypersurface (of a variety of dimension bigger than or equal to $2$) is the origin of a germ of a complex analytic curve contained in the hypersurface, it follows that for each $j$, $1 \leq j \leq n$, there is a convergent Puiseux series $w_j(t)$ in $t$ with positive exponents such that
$$\hat C_i(t) := (t, w_1(t), \ldots, w_n(t))$$
is the parametrization of a curve contained in $V(\hat f_i)$ and centered at $O$. But then $$C_i(t) :=  (t^{\alpha_1}(a_1+w_1(t)), \ldots, t^{\alpha_n}(a_n + w_n(t)))$$
satisfies the claim.\\

Then $\phi_\scrP(C_i(t)) = [a^{\alpha^0}t^{\langle \alpha, \alpha^0 \rangle} +$ h.o.t.$\: : \cdots\ :a^{\alpha^N}t^{\langle \alpha, \alpha^N \rangle} +$ h.o.t$]$. Multiplying each coordinate by $t^{-d}$ where $d := \min\{\langle \alpha, \alpha^j \rangle\}_{j=0}^N$, we see that $\lim_{t \to 0} \phi_\scrP(C_i(t)) = b := [b_0: \cdots: b_N]$, where
\begin{align*}
b_j &= \left\{
	\begin{array}{cl}
	a^{\alpha^j} 	& \text{if}\ \langle \alpha, \alpha_j\rangle = d \\
	0			&\text{otherwise}.
	\end{array}
	\right.
\end{align*}
Then $b \in \overline{C}_i \subseteq \overline{V(f_i)}$ for each $i$, and hence $\phi_\scrP$ does not preserve $f^{-1}(0)$ at $\infty$.
\end{proof}

\bibliographystyle{halpha}
\bibliography{C:/users/auniket/coxeter/kaaj/latex/utilities/bibi}

\end{document}